\newtheorem{theorem}{Theorem}[section]
\newtheorem{corollary}{Corollary}[section]
\newtheorem{lemma}{Lemma}[section]
\newtheorem{proposition}{Proposition}[section]
\newtheorem{remark}{Remark}[section]
\begin{document}

\begin{frontmatter}

\title{ Perturbed inertial primal-dual dynamics with damping and scaling terms  for linearly constrained convex optimization problems \tnoteref{mytitlenote}}

\tnotetext[mytitlenote]{This work was supported by  the National Science  Foundation of China
(11471230) and the Scientific Research Foundation  of the Education Department of Sichuan Province (16ZA0213).}

\author[mymainaddress]{Xin He}
\ead{hexinuser@163.com}

\author[mysecondaryaddress]{Rong Hu}
\ead{ronghumath@aliyun.com}

\author[mymainaddress]{Ya-Ping Fang\corref{mycorrespondingauthor}}
\cortext[mycorrespondingauthor]{Corresponding author}
\ead{ypfang@aliyun.com}

\address[mymainaddress]{Department of Mathematics, Sichuan University, Chengdu, Sichuan, P.R. China}
\address[mysecondaryaddress]{Department of Applied Mathematics, Chengdu University of Information Technology, Chengdu, Sichuan, P.R. China}

\begin{abstract}
We propose a perturbed inertial primal-dual dynamic with damping and scaling coefficients, which involves inertial terms both for primal and dual variables, for a linearly constrained convex optimization problem in a Hilbert setting. With  different choices of damping and  scaling coefficients, by a Lyapunov analysis approach we discuss the  asymptotic properties of the dynamic and prove its fast convergence properties.
Our results  can be viewed extensions of the existing ones on inertial dynamical  systems for the unconstrained convex optimization problem to the linearly  constrained convex optimization problem.
\end{abstract}
\begin{keyword}
	Perturbed inertial primal-dual dynamic \sep linearly constrained convex optimization problem \sep     damping and scaling \sep Lyapunov analysis approach \sep convergence rate
\end{keyword}

\end{frontmatter}

%\linenumbers

\section{Introduction}
\subsection{Problem statement}
 Let $\mathcal{H}_1$ and $\mathcal{H}_2$ be two real Hilbert spaces with inner $\langle \cdot,\cdot\rangle$ and norm $\|\cdot\|$. Let $f: \mathcal{H}_1\to\mathbb{R}$ be a  differentiable  convex  function and $A:\mathcal{H}_1\to\mathcal{H}_2$ be a continuous linear operator with its adjoint operator $A^T$.  Consider the  perturbed inertial primal-dual dynamical system
\begin{equation}\label{dy_dynamic}
	\begin{cases}
		\ddot{x}(t)+\alpha(t)\dot{x}(t) = -\beta(t)(\nabla f(x(t))+A^T(\lambda(t)+\delta(t)\dot{\lambda}(t))+\sigma A^T(Ax(t)-b))+\epsilon(t) ,\\
		\ddot{\lambda}(t)+\alpha(t)\dot{\lambda}(t) = \beta(t)(A(x(t)+\delta(t)\dot{x}(t))-b)
	\end{cases}
\end{equation}
where $t\in [t_0,+\infty)$ with $t_0\geq 0$, $\sigma\geq 0$, $\alpha:[t_0,+\infty)\to(0,+\infty)$  is a viscous damping coefficient, $\beta:[t_0,+\infty)\to (0,+\infty)$ is a scaling coefficient,   $\delta: [t_0,+\infty)\to (0,+\infty)$ is an extrapolation coefficient, and $\epsilon:[t_0,+\infty)\to\mathcal{H}_1$ is an integrable source term that can be interpreted as a small external perturbation.
In terms of the dynamic \eqref{dy_dynamic},  in this paper, we shall develop a fast primal-dual dynamic approach to solve the  linearly constrained convex optimization problem
	\begin{equation}\label{question}
				\min_{x}  \quad f(x), \quad s.t.  \  Ax = b.
	\end{equation}
The primal-dual dynamic \eqref{dy_dynamic} involves three important parameters: the damping coefficient $\alpha(t)$, the extrapolation coefficient $\delta(t)$, and the scaling coefficient $\beta(t)$, which play crucial roles in deriving the fast convergence properties. The importance of the damping coefficient and the scaling coefficient has been widely recognized in  inertial dynamical approaches  \cite{AttouchC2017,CabotEG2009,Su2014}  as well as fast  algorithms  \cite{Nesterov2013,Su2014, AttouchCRF2019,Shi2018,Wibisono2016} for  unstrained optimization problems.  
Recently, the damping technique and  the scaling technique were also used to develop  inertial primal-dual dynamic approaches  and inertial primal-dual algorithms for linearly constrained optimization problems, see \cite{Zeng2019,HeHF2020,HeHF2021F,HeHF2021C}. Extrapolation coefficients were also considered  in \cite{Zeng2019,HeHF2020}.

Let $\mathcal{L}(x,\lambda)$ and  $\mathcal{L}^{\sigma}(x,\lambda)$ be the Lagrangian function and the augmented Lagrangian function of the problem  \eqref{question}   respectively, i.e.,
\[\mathcal{L}(x,\lambda) = f(x)+\langle \lambda,Ax-b\rangle\]
and
\begin{equation}\label{AugL}
	\mathcal{L}^{\sigma}(x,\lambda) =\mathcal{L}(x,\lambda) +\frac{\sigma}{2}\|Ax-b\|^2= f(x)+\langle \lambda,Ax-b\rangle +\frac{\sigma}{2}\|Ax-b\|^2,
\end{equation}
where $\sigma\geq 0$ is the penalty parameter and $\lambda$ is the Lagrangian multiplier.
Let  $\Omega\subset \mathcal{H}_1\times\mathcal{H}_2$  be the  saddle point set of $\mathcal{L}$ ($\mathcal{L}^{\sigma}$).  It is known that  $(x^*,\lambda^*)\in\Omega$ if and only if
\begin{equation}\label{saddle_point}
	\begin{cases}
		-A^T\lambda^* =  \nabla f(x^*),\\
		Ax^* -b =0.
	\end{cases}
\end{equation}

Throughout this paper, we always assume that $f$  is a convex continuously differentiable function and  $\Omega\neq\emptyset$.  We will investigate the asymptotical behavior of  the dynamic \eqref{dy_dynamic}  with the damping coefficient $\alpha(t)=\frac{\alpha}{t^r}$ and  the extrapolation coefficient $\delta(t)=\delta t^s$, where $\alpha>0$, $\delta>0$, and $0\leq r\leq s\leq 1$.

\subsection{Related works}

\subsubsection{Inertial dynamical systems with  damping coefficients}

Let's recall some important inertial dynamical systems with  damping coefficients for the unstrained optimization problem 
\begin{equation}\label{min_fun}
   \min \Phi(x),
\end{equation}
where $\Phi(x)$ is a smooth convex function.   The following inertial gradient system:
\begin{equation*}
 (\text{IGS}_{\alpha})\qquad	\ddot{x}(t)+\alpha(t)\dot{x}(t)+\nabla \Phi(x(t))=0,
\end{equation*}
and its perturbed version
\begin{equation*}
 (\text{IGS}_{\alpha,\epsilon})\qquad	\ddot{x}(t)+\alpha(t)\dot{x}(t)+\nabla \Phi(x(t))=\epsilon(t),
\end{equation*}
have been intensively studied in the literature.   When damping coefficient $\alpha(t)=\alpha$ with $\alpha>0$: $(\text{IGS}_{\alpha})$ becomes the heavy ball with friction system, which was introduced by Polyak \cite{Polyak1964}, and the asymptotic behavior has been investigated in \cite{Alvarez2000,Begout2015}; under the assumption $\int^{+\infty}_{t_0}\|\epsilon(t)\|dt<+\infty$, Haraux and Jendoubi \cite{HarauxJ2012} studied the asymptotic behavior of solutions of $(\text{IGS}_{\alpha,\epsilon})$. When $\alpha(t)=\frac{\alpha}{t^r}$ with $\alpha>0,\ r\in(0,1)$:  Cabot and Frankel \cite{CabotF2012} and May \cite{May2015} investigated the asymptotic behavior of $(\text{IGS}_{\alpha})$ as $t$ goes to infinity; Jendoubi and May \cite{JendoubiM2015} generalized the results of \cite{CabotF2012} to $(\text{IGS}_{\alpha,\epsilon})$ with $\int^{+\infty}_{t_0}\|\epsilon(t)\|dt<+\infty$ and $\int^{+\infty}_{t_0}t\|\epsilon(t)\|dt<+\infty$ respectively; Balti and May \cite{Balti2016} obtained the $\mathcal{O}(1/t^{2r})$ convergence rate with $\int^{+\infty}_{t_0}t^r\|\epsilon(t)\|dt<+\infty$ and the $o(1/t^{1+r})$ convergence rate with $\int^{+\infty}_{t_0}t^{(1+r)/2}\|\epsilon(t)\|dt<+\infty$ for $(\text{IGS}_{\alpha,\epsilon})$; Sebbouh et al. \cite{Sebbouh2020} investigated the convergence rate of the values along the trajectory of  $(\text{IGS}_{\alpha,\epsilon})$  under some additional geometrical conditions on $\Phi(x)$.  When $\alpha(t)=\frac{\alpha}{t}$: Su et al. \cite{Su2014} pointed out that $(\text{IGS}_{\alpha})$ with $\alpha=3$ can be viewed as a continuous version of  the Nesterov's accelerated gradient algorithm (\cite{Beck2009,Nesterov1983}), and obtained the convergence rate $\Phi(x(t))-\min \Phi=\mathcal{O}(1/t^2)$ as $\alpha\geq 3$; Attouch et al. \cite{AttouchCPR2018} investigated the asymptotic behavior of $(\text{IGS}_{\alpha,\epsilon})$ as $\alpha\geq 3$ under the assumption $\int^{+\infty}_{t_0}t\|\epsilon(t)\|dt<+\infty$;   May \cite{May2017} proved an improved convergence rate  $\Phi(x(t))-\min \Phi= o(1/t^2)$ with  $\alpha > 3$; in the case $\alpha\leq3$ of $(\text{IGS}_{\alpha})$ and $(\text{IGS}_{\alpha,\epsilon})$, the $\mathcal{O}(1/t^{2\alpha/3})$ rate of convergence  can be found in \cite{AttouchCRR2019,Vassilis2018}; the optimal convergence rates under some additional geometrical conditions was studied by \cite{Aujol2019} for $(\text{IGS}_{\alpha})$ with $\alpha>0$.  For general damping coefficient $\alpha(t)$, it has been  investigated by \cite{AttouchC2017,AttouchCCRR2018,CabotEG2009}.

\subsubsection{Inertial dynamical systems with scaling coefficients}
Balhag el al. \cite{Balhag2020} considered following inertial gradient system with time scaling and constant  damping coefficient:
\begin{equation}\label{dy_heavy}
	\ddot{x}(t)+\alpha\dot{x}(t)+\beta(t)\nabla \Phi(x(t))=0,
\end{equation}
for solving problem \eqref{min_fun}, under the assumption $\beta(t)=e^{\beta t}$ with $\beta\leq \alpha$, they can obtain the linear convergence without strong convexity of $\Phi$. From the calculus of variations, Wibisono et al. \cite{Wibisono2016} proposed the following dynamic
\begin{equation}\label{dy_Wib}
	\ddot{x}(t)+\frac{\alpha}{t}\dot{x}(t)+C(\alpha-1)^2t^{\alpha-3}\nabla \Phi(x(t))=0,
\end{equation}
with time scaling $\beta(t)=C(\alpha-1)^2t^{\alpha-3}$ for problem \eqref{min_fun}  where $\alpha> 1$ and $C>0$, and obtained the $\mathcal{O}(1/t^{\alpha-1})$ rate of convergence.  Fazlyab et al. \cite{Fazlyab2017} extended the dynamic \eqref{dy_Wib} to following dual dynamic for solving problem \eqref{question} :
\begin{equation*}
	\ddot{\lambda}(t)+\frac{\alpha}{t}\dot{\lambda}(t)+C(\alpha-1)^2t^{\alpha-3}\nabla \mathbb{G}(\lambda(t))=0,
\end{equation*}
where $\mathbb{G}(\lambda)= \min_{x} \mathcal{L}(x,\lambda)$, $\alpha> 1$ $C>0$,  the convergence rate $\mathbb{G}(\lambda^*)-\mathbb{G}(\lambda(t))=\mathcal{O}(1/t^{\alpha-1})$ also obtained. In \cite{AttouchCRF2019}, they consider following dynamic:
\[ \ddot{x}(t)+\frac{\alpha}{t}\dot{x}(t)+\beta(t)\nabla \Phi(x(t))=0\]
for problem \eqref{min_fun}, and showed $\mathcal{O}(1/t^2\beta(t))$ rate of convergence under assumption $t\dot{\beta}(t)\leq (\alpha-3)\beta(t)$.  The general damped inertial gradient system with time scaling can be found in \cite{AttouchBCR2021,Attouchcrf2019,BotC2020}.

\subsubsection{Inertial primal-dual dynamics}
For the  affine  constrained convex optimization problem \eqref{question}, the most popular numerical methods and dynamics are based on the primal-dual framework. In recent years, many first-order dynamical systems were proposed for a better understanding of  iterative  schemes of the  numerical algorithms, (see \cite{AttouchCFR2021,BotL2020,Luo2021,Qu2018}). How to extend the dynamics $(\text{IGS}_{\alpha})$ and $(\text{IGS}_{\alpha,\epsilon})$ to second-order primal-dual dynamics for solving  problem \eqref{question} is a problem worth studying.  Recently, Zeng et al. \cite{Zeng2019}  proposed the following damped primal-dual dynamical system for solving the problem \eqref{question}:
\begin{equation}\label{dy_zeng}
	\begin{cases}
		\ddot{x}(t)+\frac{\alpha}{t}\dot{x}(t) = -\nabla f(x(t))-A^T(\lambda(t)+\delta t\dot{\lambda}(t))-\sigma A^T(Ax(t)-b),\\
		\ddot{\lambda}(t)+\frac{\alpha}{t}\dot{\lambda}(t) = A(x(t)+\delta t\dot{x}(t))-b,
	\end{cases}
\end{equation}
In this dynamic, the  damping coefficients   $\alpha(t)=\frac{\alpha}{t}, \delta(t)=\delta t$. When $\alpha>3$ and $\delta=\frac{1}{2}$, they showed that the trajectory  satisfies the following asymptotic convergence rate
\begin{equation}\label{eq_intr_con}
	\mathcal{L}(x(t),\lambda^*)-\mathcal{L}(x^*,\lambda^*)= \mathcal{O}(1/t^{2}),\quad \|Ax(t)-b\|= \mathcal{O}{(1/t)},
\end{equation}
they also obtained $\mathcal{L}(x(t),\lambda^*)-\mathcal{L}(x^*,\lambda^*)=\mathcal{O}(1/t^{2\alpha/3})$ with $\alpha\leq 3,\ \delta=\frac{3}{2\alpha}$.
He et al. \cite{HeHF2020} and  Attouch et al. \cite{AttouchBCR2021}  extended dynamic \eqref{dy_zeng} to solve separable convex optimization problems with general conditions. The ``second-order" $+$ ``first-order" primal-dual dynamics with time scaling  was investigated by \cite{HeHF2021C,HeHF2021F}.
%\begin{equation*}
%	\begin{cases}
%		\ddot{x}(t)+\gamma\dot{x}(t)& = -\beta(t)\nabla_x \mathcal{L}^{\sigma}(x(t),\lambda(t)),\\
%		 \dot{\lambda}(t) &=\beta(t)\nabla_{\lambda}\mathcal{L}^{\sigma}(x(t)+\delta\dot{x}(t),\lambda(t)),
%	\end{cases}
%\end{equation*}

%\begin{equation*}
%	\begin{cases}
%		\ddot{x}(t)+\frac{\alpha}{t}\dot{x}(t) &= -\beta(t)\nabla_x \mathcal{L}^{\sigma}(x(t),\lambda(t))+\epsilon(t) ,\\
%		\dot{\lambda}(t) &= t\beta(t)\nabla_{\lambda}\mathcal{L}^{\sigma}(x(t)+\delta t\dot{x}(t),\lambda(t)).
%	\end{cases}
%\end{equation*}

In the next, by the substitution of variables in dynamic \eqref{dy_zeng}, let's illustrate the role of time scaling $\beta(t)$ in dynamic \eqref{dy_dynamic}. Suppose that $\alpha>3$ and $\delta=\frac{1}{2}$ in \eqref{dy_zeng}, $(x^*,\lambda^*)\in \Omega$. Let's make the change of time variable $t = \upsilon (p)$, where $\upsilon:\mathbb{R}\to\mathbb{R}$ and $\lim_{p\to+\infty}\upsilon (p)=+\infty$.
Set $\bar{x}(p) = x(\upsilon(p))$ and $\bar{\lambda}(p) = \lambda(\upsilon(p))$.
By the chain rule, we have
\[\dot{\bar{x}}(p) =\dot{x}(\upsilon(p))\dot{\upsilon}(p),\quad \ddot{\bar{x}}(p) =\dot{x}(\upsilon(p))\ddot{\upsilon}(p)+\ddot{x}({\upsilon}(p))\dot{\upsilon}(p)^2 \]
and
\[\dot{\bar{\lambda}}(p) =\dot{\lambda}(\upsilon(p))\dot{\upsilon}(p),\quad \ddot{\bar{\lambda}}(p) =\dot{\lambda}(\upsilon(p))\ddot{\upsilon}(p)+\ddot{\lambda}(\dot{\upsilon}(p))\upsilon(p)^2. \]
Then rewritten \eqref{dy_zeng} in terms of $\bar{x}(\cdot),\ \bar{\lambda}(\cdot)$ and its derivatives, we obtain
\begin{equation}\label{dy_time_res}
	\begin{cases}
		\ddot{\bar{x}}(p)+\left(\alpha\frac{\dot{\upsilon}(p)}{{\upsilon}(p)}-\frac{\ddot{\upsilon}(p)}{\dot{\upsilon}(p)}\right)\dot{\bar{x}}(p) = -\dot{\upsilon}(p)^2(\nabla f(\bar{x}(p))+A^T(\bar{\lambda}(p)+\frac{\upsilon(p)}{2\dot{\upsilon}(p)}\dot{\bar{\lambda}}(p))+\sigma A^T(A\bar{x}(p)-b),\\
		\ddot{\bar{\lambda}}(p)+\left(\alpha\frac{\dot{\upsilon}(p)}{{\upsilon}(p)}-\frac{\ddot{\upsilon}(p)}{\dot{\upsilon}(p)}\right)\dot{\bar{\lambda}}(p) = \dot{\upsilon}(p)^2(A(\bar{x}(p)+\frac{\upsilon(p)}{2\dot{\upsilon}(p)}\dot{\bar{x}}(p))-b).
	\end{cases}
\end{equation}
This leads to the time scaling coefficient $\beta(p)= \dot{\upsilon}(p)^2$ and the  damping coefficients
$\alpha(p) = \alpha\frac{\dot{\upsilon}(p)}{{\upsilon}(p)}-\frac{\ddot{\upsilon}(p)}{\dot{\upsilon}(p)},\ \delta(p) = \frac{\upsilon(p)}{2\dot{\upsilon}(p)}.$
The  convergence rate \eqref{eq_intr_con} becomes
\[\mathcal{L}(\bar{x}(p),\lambda^*)-\mathcal{L}(x^*,\lambda^*) =\mathcal{O}(\frac{1}{\upsilon(p) ^2}),\quad\|A\bar{x}(p)-b\| =\mathcal{O}(\frac{1}{\upsilon(p)}).\]

In the next, we investigate two model examples. First, taking $\upsilon(p)=e^p$, then \eqref{dy_time_res} reads
\begin{equation}\label{dy_ex1}
	\begin{cases}
		\ddot{\bar{x}}(p)+(\alpha-1)\dot{\bar{x}}(p) = -e^{2p}(\nabla f(\bar{x}(p))+A^T(\bar{\lambda}(p)+\frac{1}{2}\dot{\bar{\lambda}}(p))+\sigma A^T(A\bar{x}(p)-b)),\\
		\ddot{\bar{\lambda}}(p)+(\alpha-1)\dot{\bar{\lambda}}(p) = e^{2p}(A(\bar{x}(p)+\frac{1}{2}\dot{\bar{x}}(p))-b).
	\end{cases}
\end{equation}
In this case, the damping coefficients   $\alpha(p)= \alpha-1$, $\delta(p)=\frac{1}{2}$ are constants,  the time scaling coefficient is $\beta(p)= e^{2p}$, and the convergence rate becomes
 \[\mathcal{L}(\bar{x}(p),\lambda^*)-\mathcal{L}(x^*,\lambda^*) =\mathcal{O}(\frac{1}{e^{2p}}),\quad \|A\bar{x}(p)-b\| =\mathcal{O}(\frac{1}{e^{p}}).\]
 Taking $\upsilon(p)=p^\kappa$ with $\kappa>0$, then \eqref{dy_time_res} reads
\begin{equation}\label{dy_ex2}
	\begin{cases}
		\ddot{\bar{x}}(p)+\frac{1+(\alpha-1)\kappa}{p}\dot{\bar{x}}(p) = -\kappa^2p^{2(\kappa-1)}(\nabla f(\bar{x}(p))+A^T(\bar{\lambda}(p)+\frac{p}{2\kappa}\dot{\bar{\lambda}}(p))+\sigma A^T(A\bar{x}(p)-b)),\\
		\ddot{\bar{\lambda}}(p)+\frac{1+(\alpha-1)\kappa}{s}\dot{\bar{\lambda}}(p) = \kappa^2p^{2(\kappa-1)}(A(\bar{x}(p)+\frac{p}{2\kappa}\dot{\bar{x}}(p))-b).
	\end{cases}
\end{equation}
the convergence rate becomes
 \[\mathcal{L}(\bar{x}(p),\lambda^*)-\mathcal{L}(x^*,\lambda^*) =\mathcal{O}(\frac{1}{p^{2\kappa}}),\quad \|A\bar{x}(p)-b\| =\mathcal{O}(\frac{1}{t^\kappa}),\]
the damping coefficient $\alpha(p) = \frac{1+(\alpha-1)\kappa}{p}$. For $\kappa \geq 1$, we have $ 1+(\alpha-1)\kappa \geq \alpha$ , so damping coefficient similar to \eqref{dy_zeng}, where $\alpha(t) =\frac{\alpha}{t}$.

\subsection{Organisation}
In Section 2, we present the rate of convergence in the different choice of damping coefficient and extrapolation coefficient under the suitable  assumptions on time scaling coefficient and external perturbation. Section 3 concludes the paper. Some technical proofs and lemmas are postponed to Appendix .

\section{Main results}
In this paper, we will investigate the dynamic \eqref{dy_dynamic} with damping coefficient $\alpha(t)=\frac{\alpha}{t^r}$ and  extrapolation coefficient $\delta(t)=\delta t^s$, where $\alpha>0$, $\delta>0$, $0\leq r\leq s\leq 1$. The the dynamic \eqref{dy_dynamic} becomes:
\begin{equation}\label{dy_Appendix}
	\begin{cases}
		\ddot{x}(t)+\frac{\alpha}{t^r}\dot{x}(t) = -\beta(t)(\nabla f(x(t))+A^T(\lambda(t)+\delta t^s\dot{\lambda}(t))+\sigma A^T(Ax(t)-b))+\epsilon(t),\\
		\ddot{\lambda}(t)+\frac{\alpha}{t^r}\dot{\lambda}(t) = \beta(t)(A(x(t)+\delta t^s\dot{x}(t))-b).
	\end{cases}
\end{equation}
Before investigating the rate of convergence, we first discuss the existence and uniqueness of solutions for dynamical system \eqref{dy_Appendix}.

When $\nabla f(x)$ is Lipschitz continuous on $\mathcal{H}_1$, from \cite[Theorem 4.2]{AttouchBCR2021}, for any $(x_0,\lambda_0,u_0,v_0)$, the dynamic \eqref{dy_Appendix} has a unique strong global  solution $(x(t),\lambda(t))$, in which (i): $x(t)\in\mathcal{C}^2([t_0,+\infty),\mathcal{H}_1)$,  $\lambda(t)\in\mathcal{C}^2([t_0,\infty),\mathcal{H}_2)$; (2): $(x(t),\lambda(t))$ and $(\dot{x}(t),\dot{\lambda}(t))$ are locally absolutely continuous; (3): for almost every $t\in [0,+\infty)$, \eqref{dy_Appendix} holds, and $(x(t_0),\lambda(t_0))=(x_0,\lambda_0)$ and $(\dot{x}(t_0),\dot{\lambda}(t_0))=(u_0,v_0)$.

When $\nabla f(x)$ is locally Lipschitz continuous on $\mathcal{H}_1$, following from the Picard-Lindelof Theorem (see \cite[Theorem 2.2]{Teschl2012}), we can establish the local existence and uniqueness solution of dynamic \eqref{dy_Appendix} as follows:
\begin{proposition}\label{pro_local_exist}
	Let $f$  be  continuously differentiable function such that $\nabla f$ is locally Lipschitz continuous, $\beta:[t_0,+\infty)\to (0,+\infty)$ be a continuous function, $\epsilon:[t_0,+\infty)\to\mathcal{H}_1$ be locally integrable. Then for any $(x_0,\lambda_0,u_0,v_0)$, there exists a unique solution $(x(t),\lambda(t))$ with $x(t)\in\mathcal{C}^2([t_0,T),\mathcal{H}_1)$,  $\lambda(t)\in\mathcal{C}^2([t_0,T),\mathcal{H}_2)$ of the dynamic \eqref{dy_Appendix} satisfying $(x(t_0),\lambda(t_0))=(x_0,\lambda_0)$ and $(\dot{x}(t_0),\dot{\lambda}(t_0))=(u_0,v_0)$ on a maximal interval $[t_0,T)\subseteq[t_0,+\infty)$.
\end{proposition}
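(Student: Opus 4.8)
The plan is to reduce the second-order system \eqref{dy_Appendix} to a first-order system in a product Hilbert space and then invoke the Picard--Lindel\"of (Cauchy--Lipschitz) theorem for non-autonomous ODEs with a Carath\'eodory right-hand side. First I would introduce the state variable $z(t)=(x(t),\lambda(t),u(t),v(t))\in\mathcal{H}:=\mathcal{H}_1\times\mathcal{H}_2\times\mathcal{H}_1\times\mathcal{H}_2$ with $u=\dot x$, $v=\dot\lambda$, so that \eqref{dy_Appendix} is equivalent to $\dot z(t)=F(t,z(t))$ where
\begin{equation*}
	F(t,x,\lambda,u,v)=\Bigl(u,\ v,\ -\tfrac{\alpha}{t^r}u-\beta(t)\bigl(\nabla f(x)+A^T(\lambda+\delta t^s v)+\sigma A^T(Ax-b)\bigr)+\epsilon(t),\ -\tfrac{\alpha}{t^r}v+\beta(t)\bigl(A(x+\delta t^s u)-b\bigr)\Bigr).
\end{equation*}
The initial condition $z(t_0)=(x_0,\lambda_0,u_0,v_0)$ matches the prescribed data.

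Next I would verify the Carath\'eodory-type hypotheses of \cite[Theorem 2.2]{Teschl2012}: for each fixed $t$, $z\mapsto F(t,z)$ is continuous (indeed locally Lipschitz, since $\nabla f$ is locally Lipschitz by assumption and all other dependences on $z$ are affine with bounded linear operators $A,A^T$); for each fixed $z$, $t\mapsto F(t,z)$ is measurable, because $t^{-r}$, $t^s$ and $\beta$ are continuous on $[t_0,+\infty)$ and $\epsilon$ is locally integrable; and on any compact time interval $[t_0,b]$ and any bounded set of states, $\|F(t,z)\|$ is dominated by an integrable function of $t$ (the coefficients $\tfrac{\alpha}{t^r}$, $\beta(t)$, $\delta t^s$ are bounded on $[t_0,b]$ since $t_0\ge 0$ makes $t^{-r}$ continuous there when $t_0>0$, and the only genuinely $L^1$-but-not-$L^\infty$ term is $\epsilon$). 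More precisely I would fix a closed ball $\bar B(z_0,\rho)$ in $\mathcal{H}$ and note that on $[t_0,b]\times\bar B(z_0,\rho)$ one has a bound $\|F(t,z)\|\le a(t)+M\|z-z_0\|$ with $a\in L^1_{loc}$ and a uniform local Lipschitz constant in $z$; this is exactly what is needed to run the Banach fixed-point argument on the integral equation $z(t)=z_0+\int_{t_0}^t F(s,z(s))\,ds$ on a short interval, giving local existence and uniqueness.

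Then I would pass from the short-interval solution to the maximal interval: the standard continuation argument shows that the union of all local solutions defines a unique solution on a maximal interval $[t_0,T)\subseteq[t_0,+\infty)$, and that the regularity claimed in the statement follows by bootstrapping, namely $z$ is locally absolutely continuous, hence $x,\lambda\in\mathcal{C}^1$ with $\dot x=u$, $\dot\lambda=v$ locally absolutely continuous, and since the right-hand sides of the second-order equations in \eqref{dy_Appendix} are continuous in $t$ along the trajectory, $\ddot x$ and $\ddot\lambda$ are continuous, so $x\in\mathcal{C}^2([t_0,T),\mathcal{H}_1)$ and $\lambda\in\mathcal{C}^2([t_0,T),\mathcal{H}_2)$.

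The only delicate point, and the one I would treat carefully, is the behaviour of the coefficient $\tfrac{\alpha}{t^r}$ at $t=0$ in the case $t_0=0$ with $r>0$: there $t^{-r}$ is not bounded near the left endpoint, but it is still in $L^1_{loc}([0,b])$ as long as $r<1$, so the Carath\'eodory domination condition still holds and the argument goes through; if $r=1$ and $t_0=0$ one restricts to $t_0>0$, which is consistent with the hypothesis $t\in[t_0,+\infty)$ used throughout. Apart from this endpoint bookkeeping, the proof is a routine application of the vector-valued Picard--Lindel\"of theorem, so I would keep the write-up short and refer to \cite{Teschl2012,AttouchBCR2021} for the standard details.
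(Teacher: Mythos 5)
Your proposal is correct and follows essentially the same route as the paper, which proves this proposition simply by rewriting \eqref{dy_Appendix} as a first-order system in the phase space $\mathcal{H}_1\times\mathcal{H}_2\times\mathcal{H}_1\times\mathcal{H}_2$ and invoking the Picard--Lindel\"of theorem of \cite{Teschl2012} together with the standard continuation argument. The only caveat, inherited from the statement itself rather than introduced by you, is that with $\epsilon$ merely locally integrable the bootstrap yields $\dot x,\dot\lambda$ locally absolutely continuous rather than genuinely $\mathcal{C}^2$ trajectories; full $\mathcal{C}^2$ regularity requires $\epsilon$ continuous, as your remark about the right-hand side being ``continuous in $t$ along the trajectory'' implicitly assumes.
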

So under the assumptions in Proposition \ref{pro_local_exist}, we obtain that there exists a unique solution $(x(t),\lambda(t))$ defined on maximal interval $[t_0,T)\subseteq[t_0,+\infty)$. If we can prove that the derivative of trajectory $(\dot{x}(t),\dot{\lambda}(t))$ is bounded on $[t_0,T)$, it follows from assumptions that $(\ddot{x}(t),\ddot{\lambda}(t))$ is also bounded on $[t_0,T)$. This implies that $(x(t),\lambda(t))$ and its derivative $(\dot{x}(t),\dot{\lambda}(t))$ have a limit at $t = T$, and therefore can be continued, a contradiction. Thus $T=+\infty$, we obtain the existence and uniqueness of global solution of dynamic \eqref{dy_Appendix}.
To simplify the proof process, we assume that the global solution of dynamic \eqref{dy_dynamic} exists.  We will discuss the existence and uniqueness of global solution of dynamics \eqref{dy_Appendix} in the case $r=0,s\in[0,1]$ later, and it can be proved similarly for other cases.

In order to  investigate the convergence rates of dynamic \eqref{dy_Appendix} under different choices of $r, s$.  We  construct the different energy functions, fixed $(x^*,\lambda^*)\in\Omega$, for any $\lambda\in\mathcal{H}_2$, define the energy function $\mathcal{E}^{\lambda,\rho}_{\epsilon}:[t_0,+\infty)\to\mathbb{R}$ as
\begin{eqnarray}\label{eq_A_ex}
	\mathcal{E}^{\lambda,\rho}_{\epsilon}(t)=\mathcal{E}^{\lambda,\rho}(t)-\int^{t}_{t_0}\langle \theta(w)(x(w)-x^*)+w^\rho\dot{x}(w),w^\rho \epsilon(w)\rangle dw,
\end{eqnarray}
where
\begin{equation}\label{eq_A1}
	\mathcal{E}^{\lambda,\rho}(t) = \mathcal{E}_0(t)+\mathcal{E}_1(t)+\mathcal{E}_2(t),
\end{equation}
with
\begin{equation*}\label{eq_A2}
	\begin{cases}
		\mathcal{E}_0(t) = t^{2\rho}\beta(t)(\mathcal{L}^{\sigma}(x(t),\lambda)-\mathcal{L}^{\sigma}(x^*,\lambda)	),\\
		\mathcal{E}_1(t) = \frac{1}{2}\|\theta(t)(x(t)-x^*)+t^\rho\dot{x}(t)\|^2+\frac{\eta(t)}{2}\|x(t)-x^*\|^2,\\
		\mathcal{E}_2(t) = \frac{1}{2}\|\theta(t)(\lambda(t)-\lambda)+t^\rho\dot{\lambda}(t)\|^2+\frac{\eta(t)}{2}\|\lambda(t)-\lambda\|^2,
	\end{cases}
\end{equation*}
$\theta,\eta:[t_0,+\infty)\to \mathbb{R}$ are two smooth functions, and $\rho\geq 0$.

The key point of our proof is to find the appropriate $\theta(t),\eta(t)$ to ensure that the energy function $\mathcal{E}^{\lambda,\rho}_{\epsilon}(t)$ is decreasing. To avoid repeated calculations, we  list the main calculation procedures in \ref{APPENDIX_A1}.

\subsection{Case $r=0,\ s\in[0,1]$}
Let us first consider the case when $r=0$, $s\in[0,1]$, i.e., the dynamic \eqref{dy_Appendix}:
\begin{equation}\label{dy_dy1}
	\begin{cases}
		\ddot{x}(t)+\alpha\dot{x}(t) = -\beta(t)(\nabla f(x(t))+A^T(\lambda(t)+\delta t^s \dot{\lambda}(t))+\sigma A^T(Ax(t)-b))+\epsilon(t) ,\\
		\ddot{\lambda}(t)+\alpha\dot{\lambda}(t) = \beta(t)(A(x(t)+\delta t^s\dot{x}(t))-b),
	\end{cases}
\end{equation}
with $\alpha>0,\ \delta>0,\ \sigma\geq 0,\ t\geq t_0> 0$.

\begin{theorem}\label{th_th1}
	Assume that $\beta:[t_0,+\infty)\to(0,+\infty)$ is continuous differentiable function with
	\begin{equation}\label{ass_th1}
		t^s\dot{\beta}(t)\leq (\frac{1}{\delta}-st^{s-1})\beta(t),
	\end{equation}
and $\epsilon:[t_0,+\infty)\to\mathcal{H}_1$ is a  integrable function with
\begin{equation*}
\int^{+\infty}_{t_0}t^{\frac{s}{2}}\|\epsilon(t)\|dt<+\infty.
\end{equation*}
Suppose $\alpha\delta>1$ when $s=0$; $\delta\leq 1$ when $s=1$, and $\sigma>0$.  Let $(x(t),\lambda(t))$ be a global solution of the dynamic \eqref{dy_dy1} and $(x^*,\lambda^*)\in\Omega$. Then   $(x(t),\lambda(t))$ is bounded, and the following conclusions hold:
\begin{itemize}
	\item [(i)] $\int^{+\infty}_{t_0}((\frac{1}{\delta}-st^{s-1})\beta(t)-t^s\dot{\beta}(t))(\mathcal{L}^{\sigma}(x(t),\lambda^*)-\mathcal{L}^{\sigma}(x^*,\lambda^*)) dt <+\infty$.
	\item [(ii)] $\int^{+\infty}_{t_0}t^{s}(\|\dot{x}(t)\|^2+\|\dot{\lambda}(t)\|^2)dt <+\infty$,\ $\int^{+\infty}_{t_0}\beta(t)\|Ax(t)-b\|^2 dt <+\infty$.
	\item [(iii)]  $\|\dot{x}(t)\|+\|\dot{\lambda}(t)\| =\mathcal{O}(\frac{1}{t^{s/2}})$.
	\item [(iv)] When $ \lim_{t\to+\infty}t^s\beta(t) = +\infty$:
	 \[ \mathcal{L}(x(t),\lambda^*)-\mathcal{L}(x^*,\lambda^*) =\mathcal{O}(\frac{1}{t^{s}\beta(t)}),\quad\|Ax(t)-b\| =\mathcal{O}(\frac{1}{t^{s/2}\sqrt{\beta(t)}}).\]
\end{itemize}
\end{theorem}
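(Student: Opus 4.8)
\emph{The plan.} I would run a Lyapunov argument with the energy function $\mathcal E^{\lambda,\rho}_\epsilon$ of \eqref{eq_A_ex}--\eqref{eq_A1}, specialized to $\lambda=\lambda^*$ (a fixed element of $\Omega$), $\rho=s/2$ (so that $t^{2\rho}=t^s$, matching the scaling in (iii)--(iv) and the weight $w^{s/2}$ in the perturbation hypothesis), $\theta(t)=\tfrac{1}{\delta t^{s/2}}$ and $\eta(t)=\tfrac{\alpha}{\delta}-\tfrac{1}{\delta^2 t^s}$; the last two are precisely the functions singled out in \ref{APPENDIX_A1} so that $\theta(t)t^{\rho}\equiv 1/\delta$ and the ``mixed'' terms below cancel. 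The preliminary observations are: $\mathcal E_0(t)=t^{s}\beta(t)\big(\mathcal L^{\sigma}(x(t),\lambda^*)-\mathcal L^{\sigma}(x^*,\lambda^*)\big)\ge 0$ since $x^*$ minimizes $\mathcal L^{\sigma}(\cdot,\lambda^*)$; and $\eta(t)>0$ for all $t\ge t_0$ when $s=0$ (here $\alpha\delta>1$ is used) and for $t$ large when $s\in(0,1]$, so $\mathcal E_2\ge 0$ and hence $\mathcal E_1(t)\le\mathcal E^{\lambda^*,\rho}(t)$ and $\tfrac{\eta(t)}{2}\|x(t)-x^*\|^2\le\mathcal E^{\lambda^*,\rho}(t)$ (and symmetrically in $\lambda$).

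\emph{Differentiating along the flow.} The heart of the proof is to differentiate $\mathcal E^{\lambda^*,\rho}_\epsilon$ along \eqref{dy_dy1}: substitute $t^{\rho}\ddot x$, $t^{\rho}\ddot\lambda$ from the dynamic, expand $\langle v_1,\dot v_1\rangle$ and $\langle v_2,\dot v_2\rangle$ (with $v_1=\theta(x-x^*)+t^{\rho}\dot x$, $v_2=\theta(\lambda-\lambda^*)+t^{\rho}\dot\lambda$) and $\dot{\mathcal E}_0$, and invoke three facts: (a) the strengthened convexity inequality $\langle x-x^*,\nabla f(x)+A^{T}\lambda^*+\sigma A^{T}(Ax-b)\rangle\ge\mathcal L^{\sigma}(x,\lambda^*)-\mathcal L^{\sigma}(x^*,\lambda^*)+\tfrac{\sigma}{2}\|Ax-b\|^{2}$, which holds because $f$ is convex, $\nabla f(x^*)=-A^{T}\lambda^*$ and $Ax^*=b$; (b) $\mathcal L$ is affine in $\lambda$, so the $\lambda$-force $\beta(t)(A(x+\delta t^{s}\dot x)-b)$ contributes no convexity defect; (c) $Ax^*=b$, turning every $A(x-x^*)$ into $Ax-b$. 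With $\theta(t)t^{\rho}\equiv1/\delta$ and $\eta$ as above, all the crossed primal--dual terms (those in $\langle A^{T}(\lambda-\lambda^*),x-x^*\rangle$, $\langle A^{T}\dot\lambda,x-x^*\rangle$, $\langle A^{T}\dot\lambda,\dot x\rangle$), the terms $t^{s}\beta\langle\nabla f(x)+A^{T}\lambda^*+\sigma A^{T}(Ax-b),\dot x\rangle$, the residual $x-x^*$ and $\lambda-\lambda^*$ terms (which vanish by the choice of $\eta$), and the perturbation term all cancel against $\dot{\mathcal E}_0$, the two blocks, and the derivative of the integral in \eqref{eq_A_ex}, leaving
\begin{equation*}
\dot{\mathcal E}^{\lambda^*,\rho}_\epsilon(t)\le-\Big(\big(\tfrac1\delta-st^{s-1}\big)\beta(t)-t^{s}\dot\beta(t)\Big)\big(\mathcal L^{\sigma}(x(t),\lambda^*)-\mathcal L^{\sigma}(x^*,\lambda^*)\big)-\tfrac{\sigma}{2\delta}\beta(t)\|Ax(t)-b\|^{2}-\Big(\alpha t^{s}-\tfrac{s}{2}t^{s-1}-\tfrac1\delta\Big)\big(\|\dot x(t)\|^{2}+\|\dot\lambda(t)\|^{2}\big).
\end{equation*}
By \eqref{ass_th1} the first coefficient is $\ge 0$; by $\sigma>0$ the second term is $\le 0$; and $\alpha t^{s}-\tfrac s2 t^{s-1}-\tfrac1\delta\ge c\,t^{s}>0$ for all $t\ge t_0$ when $s=0$ (again by $\alpha\delta>1$) and for $t$ large when $s\in(0,1]$. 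Hence $\dot{\mathcal E}^{\lambda^*,\rho}_\epsilon(t)\le 0$ for $t$ large. (The hypotheses $\alpha\delta>1$ for $s=0$, $\delta\le1$ for $s=1$, and $\sigma>0$ are exactly what keep these residual coefficients of the right sign.)

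\emph{From the energy inequality to the conclusions.} Since $\mathcal E^{\lambda^*,\rho}_\epsilon$ is eventually non-increasing and $\mathcal E_1\le\mathcal E^{\lambda^*,\rho}$, one gets
\[\mathcal E^{\lambda^*,\rho}(t)\le C+\int_{t_0}^{t}w^{s/2}\,\|\theta(w)(x(w)-x^*)+w^{s/2}\dot x(w)\|\,\|\epsilon(w)\|\,dw\le C+\int_{t_0}^{t}w^{s/2}\sqrt{2\mathcal E^{\lambda^*,\rho}(w)}\,\|\epsilon(w)\|\,dw;\]
since $\int_{t_0}^{+\infty}w^{s/2}\|\epsilon(w)\|\,dw<+\infty$, the classical scalar lemma ``$u\le c+\int g\sqrt u\ \Rightarrow\ \sqrt{u(t)}\le\sqrt{c}+\tfrac12\int g$'' gives $\sup_{t\ge t_0}\mathcal E^{\lambda^*,\rho}(t)<+\infty$. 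Boundedness of $\mathcal E^{\lambda^*,\rho}$ together with $\eta(t)$ bounded below by a positive constant immediately yields that $x(t)-x^*$, $\lambda(t)-\lambda^*$, $v_1(t)$, $v_2(t)$ are bounded (hence $(x(t),\lambda(t))$ is bounded); as $\theta(t)=O(t^{-s/2})$, $t^{s/2}\|\dot x(t)\|\le\|v_1(t)\|+\theta(t)\|x(t)-x^*\|=O(1)$ and likewise for $\dot\lambda$, which is (iii); and $\mathcal E_0(t)\le\mathcal E^{\lambda^*,\rho}(t)=O(1)$ gives $\mathcal L^{\sigma}(x(t),\lambda^*)-\mathcal L^{\sigma}(x^*,\lambda^*)=O(1/(t^{s}\beta(t)))$, so splitting the augmented gap into the nonnegative Lagrangian gap plus $\tfrac{\sigma}{2}\|Ax(t)-b\|^{2}$ yields (iv) when $t^{s}\beta(t)\to+\infty$. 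Finally, the same Cauchy--Schwarz estimate shows $\mathcal E^{\lambda^*,\rho}_\epsilon$ is bounded below, so being eventually monotone it converges and $\int_{t_0}^{+\infty}\big(-\dot{\mathcal E}^{\lambda^*,\rho}_\epsilon(t)\big)\,dt<+\infty$; reading off the three nonnegative summands in the displayed inequality (using $\sigma>0$ for the $\|Ax-b\|^{2}$ one) gives (i) and (ii).

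\emph{Main obstacle.} The one genuinely laborious step is the differentiation in the second paragraph: one must carry along every crossed primal--dual term and verify that the choices $\theta(t)t^{s/2}\equiv1/\delta$, $\eta(t)=\tfrac{\alpha}{\delta}-\tfrac1{\delta^2 t^{s}}$ produce exactly the announced cancellations, and that the surviving coefficients stay of the correct sign across $s=0$, $s\in(0,1)$ and $s=1$; the bulk of this is the computation assembled in \ref{APPENDIX_A1}, instantiated here with $\rho=s/2$ and the above $\theta,\eta$. A minor separate point is global existence on $[t_0,+\infty)$: the boundedness of $(\dot x,\dot\lambda)$ obtained above is what rules out finite-time blow-up in Proposition \ref{pro_local_exist}, though as noted in the text one may simply take the global solution as given.
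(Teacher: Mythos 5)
Your proposal is correct and follows essentially the same route as the paper: the same energy function $\mathcal{E}^{\lambda^*,\rho}_{\epsilon}$ with $\rho=s/2$, $\theta(t)=\frac{1}{\delta}t^{-s/2}$, $\eta(t)=\frac{1}{\delta}(\alpha-\frac{1}{\delta}t^{-s})$, the same differential inequality from the Appendix computation, and the same passage to (i)--(iv) via boundedness and integration. The only cosmetic difference is that you apply the Gronwall-type lemma to $\sqrt{2\mathcal{E}^{\lambda^*,\rho}}$ rather than to $\|\theta(t)(x(t)-x^*)+t^{\rho}\dot{x}(t)\|$ as the paper does, which is an equivalent use of Lemma \ref{le_A1}.
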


\begin{proof}
Given $\lambda\in\mathcal{H}_2$, define energy functions $\mathcal{E}^{\lambda,\rho}(t)$ and $\mathcal{E}^{\lambda,\rho}_{\epsilon}(t)$  same as  \eqref{eq_A1}, \eqref{eq_A_ex} with $r=0$, $s\in [0,1]$, $\rho=\frac{s}{2}$, and
\begin{equation}\label{eq_th1_0}
	\theta(t) = \frac{1}{\delta}t^{-s/2}, \quad \eta(t) = \frac{1}{\delta}(\alpha-\frac{1}{\delta}t^{-s}).
\end{equation}
 By computations, we can verify that  \eqref{eq_A4} and \eqref{eq_A6} hold.

{\bf Case $s=0$:} $\theta(t) = \frac{1}{\delta}$ and $\eta(t) =\frac{\alpha\delta-1}{2\delta^2}$. Since $\alpha\delta>1$,  we obtain that \eqref{eq_A3}, \eqref{eq_A5} hold, and then  \eqref{eq_A8} holds,
\begin{eqnarray}\label{eq_th1_1}
	 \theta(t)+\rho t^{\rho-1}-\alpha t^{\rho-r} =\frac{1}{\delta}-\alpha<0.
\end{eqnarray}
It follows from \eqref{ass_th1} that
\begin{eqnarray}\label{eq_th1_2}
	t^{\rho}\dot{\beta}(t)+(2\rho t^{\rho-1}-\theta(t))\beta(t)= \dot{\beta}(t)-\frac{1}{\delta}\beta(t)\leq 0
\end{eqnarray}
for all $t\geq t_0$. Taking $\lambda = \lambda^*$, then $\mathcal{L}^{\sigma}(x(t),\lambda^*)- \mathcal{L}^{\sigma}(x^*,\lambda^*)\geq 0$, it follows from \eqref{eq_th1_1}, \eqref{eq_th1_2} and \eqref{eq_A8} that
\begin{eqnarray}\label{eq_th1_3}
\dot{\mathcal{E}}^{\lambda^*,\rho}_{\epsilon}(t) &\leq& (\frac{1}{\delta}-\alpha) (\|\dot{x}(t)\|^2+\|\dot{\lambda}(t)\|^2)- \frac{\sigma\beta(t)}{2\delta}\|Ax(t)-b)\|^2\nonumber\\
&&+(\dot{\beta}(t)-\frac{1}{\delta}\beta(t))(\mathcal{L}^{\sigma}(x(t),\lambda^*)- \mathcal{L}^{\sigma}(x^*,\lambda^*))\\
	&\leq & 0. \nonumber
\end{eqnarray}
So ${\mathcal{E}}^{\lambda^*,\rho}_{\epsilon}(\cdot)$ is nonincreasing on $[t_0,+\infty)$, and then
\begin{equation}\label{eq_th1_4}
	{\mathcal{E}}^{\lambda^*,\rho}_{\epsilon}(t)\leq {\mathcal{E}}^{\lambda^*,\rho}_{\epsilon}(t_0), \quad
	\forall t\geq t_0.
\end{equation}
By the definition of ${\mathcal{E}}^{\lambda^*,\rho}(\cdot)$ and ${\mathcal{E}}^{\lambda^*,\rho}_{\epsilon}(\cdot)$, we have
\begin{equation*}\label{eq_th1_5}
	\frac{1}{2}\|\frac{1}{\delta}(x(t)-x^*)+\dot{x}(t)\|^2\leq {\mathcal{E}}^{\lambda^*,\rho}_{\epsilon}(t_0)+\int^{t}_{t_0}\langle \frac{1}{\delta}(x(w)-x^*)+\dot{x}(w),\epsilon(w)\rangle dw.
\end{equation*}
By  Cauchy-Schwarz inequality, we get
\begin{equation*}\label{eq_th1_6}
	\frac{1}{2}\|\frac{1}{\delta}(x(t)-x^*)+\dot{x}(t)\|^2\leq |{\mathcal{E}}^{\lambda^*,\rho}_{\epsilon}(t_0)|+\int^{t}_{t_0} \|\frac{1}{\delta}(x(w)-x^*)+\dot{x}(w)\|\|\epsilon(w)\| dw,
\end{equation*}
then applying Lemma  \ref{le_A1} with $\mu(t) = \|\frac{1}{\delta}(x(t)-x^*)+\dot{x}(t)\|$, we obtain
\begin{equation}\label{eq_th1_7}
	\sup_{t\geq t_0} \|\frac{1}{\delta}(x(t)-x^*)+\dot{x}(t)\|\leq \sqrt{2|{\mathcal{E}}^{\lambda^*,\rho}_{\epsilon}(t_0)|}+\int^{+\infty}_{t_0}\|\epsilon(t)\|dt<+\infty.
\end{equation}
 It is easy to verify ${\mathcal{E}}^{\lambda^*,\rho}(t)\geq 0$ for all $t\geq t_0$, then we have
\begin{equation*}\label{eq_th1_8}
	\inf_{t\geq t_0 }{\mathcal{E}}^{\lambda^*,\rho}_{\epsilon}(t) \geq -\sup_{t\geq t_0}\|\frac{1}{\delta}(x(t)-x^*)+\dot{x}(t)\|\times\int^{+\infty}_{t_0}\|\epsilon(s)\| ds > -\infty
\end{equation*}
and
\begin{equation*}\label{eq_th1_9}
	\sup_{t\geq t_0 }{\mathcal{E}}^{\lambda^*,\rho}(t) \leq{\mathcal{E}}^{\lambda^*,\rho}_{\epsilon}(t_0)+\sup_{t\geq t_0}\|\frac{1}{\delta}(x(t)-x^*)+\dot{x}(t)\|\times\int^{+\infty}_{t_0}\|\epsilon(s)\| ds < +\infty.
\end{equation*}
This together with \eqref{eq_th1_4} and the definition of $\mathcal{E}^{\lambda^*,\rho}(\cdot)$ yields the boundedness of $\mathcal{E}^{\lambda^*,\rho}(\cdot)$ and ${\mathcal{E}}^{\lambda^*,\rho}_{\epsilon}(\cdot)$.
 By integrating inequality \eqref{eq_th1_3} on $[t_0.+\infty)$, it follows the boundedness of $\mathcal{E}^{\lambda^*,\rho}_{\epsilon}(\cdot)$  that
\begin{eqnarray*}\label{eq_th1_10}
	&&(\alpha-\frac{1}{\delta})\int^{+\infty}_{t_0}\|\dot{x}(t)\|^2+\|\dot{\lambda}(t)\|^2 dt +\int^{+\infty}_{t_0}(\frac{1}{\delta}\beta(t)-\dot{\beta}(t))(\mathcal{L}^{\sigma}(x(t),\lambda^*)- \mathcal{L}^{\sigma}(x^*,\lambda^*))dt\\
	&&\qquad\qquad\qquad +\frac{\sigma}{2\delta}\int^{+\infty}_{t_0} \beta(t)\|Ax(t)-b)\|^2\\
	&&\qquad\qquad\quad<+\infty.
\end{eqnarray*}
This together with $\frac{1}{\delta}<\alpha$ yields $(i)-(ii)$.

 Since $\eta(t)=\frac{\alpha\delta-1}{2\delta^2}>0$, By the boundedness of $\mathcal{E}^{\lambda^*,\rho}(\cdot)$, we obtain that $\|x(t)-x^*\|^2,\|\lambda(t)-\lambda^*\|^2$, $ \|\frac{1}{\delta}(x(t)-x^*)+\dot{x}(t)\|$ and $ \|\frac{1}{\delta}(\lambda(t)-\lambda^*)+\dot{\lambda}(t)\|$  are bounded, and then the trajectory $(x(t),\lambda(t))$ is bounded,
	\[\sup_{t_0\in[t_0,+\infty)} \|\dot{x}(t)\|\leq \frac{1}{\delta} \sup_{t\in[t_0,+\infty)}\|x(t)-x^*\|+\sup_{t\in[t_0,+\infty)}\|\frac{1}{\delta}(x(t)-x^*)+\dot{x}(t)\| <+\infty,\]
similarly, $\sup_{t_0\in[t_0,+\infty)} \|\dot{\lambda}(t)\|<+\infty$,
this is $(iii)$. When $\lim_{t\to+\infty}\beta(t)=+\infty$, following from the boundedness of $\mathcal{E}^{\lambda^*,\rho}(\cdot)$, we get
\begin{equation*}\label{eq_th1_11}
	\mathcal{L}^{\sigma}(x(t),\lambda^*)-\mathcal{L}^{\sigma}(x^*,\lambda^*) =\mathcal{O}(\frac{1}{\beta(t)}).	
\end{equation*}
Since $\mathcal{L}^{\sigma}(x(t),\lambda^*)-\mathcal{L}^{\sigma}(x^*,\lambda^*)=\mathcal{L}(x(t),\lambda^*)-\mathcal{L}(x^*,\lambda^*)+\frac{\sigma}{2}\|Ax(t)-b\|^2$, then we obtain $(iv)$.

 {\bf Case $s\in(0,1]$:} There exists $t_1\geq t_0$ such that
\begin{eqnarray}\label{eq_th1_12}
\frac{1}{\delta}t^{-s}+\frac{s}{2} t^{-1}\leq \frac{\alpha}{2}, \quad\forall\ t\geq t_1,
\end{eqnarray}
this together with \eqref{eq_th1_0} yields
\begin{equation}\label{eq_th1_13}
	 \eta(t)\geq \frac{\alpha}{2\delta}> 0, \quad \forall\ t\geq t_1.
\end{equation}
We can  compute that
\begin{eqnarray*}\label{eq_th1_14}
	\theta(t)\dot{\theta}(t)+\frac{\dot{\eta}(t)}{2} = 0.
\end{eqnarray*}
Then \eqref{eq_A3}-\eqref{eq_A6} are satisfied  for any $t\geq t_1$.
It follows from \eqref{eq_th1_12} that
\begin{eqnarray}\label{eq_th1_15}
	\theta(t)+\rho t^{\rho-1}-\alpha t^{\rho-r} = t^{s/2}(\frac{1}{\delta}t^{-s}+\frac{s}{2} t^{-1}-\alpha)\leq -\frac{\alpha}{2}t^{s/2}, \quad \forall\ t\geq t_1.
\end{eqnarray}
By computation,
\begin{eqnarray}\label{eq_th1_16}
	t^{\rho}(t^{\rho}\dot{\beta}(t)+(2\rho t^{\rho-1}-\theta(t))\beta(t))= t^s\dot{\beta}(t)-(\frac{1}{\delta}-st^{s-1})\beta(t)\leq 0,\quad\forall \ t\geq t_0.
\end{eqnarray}
Let $\lambda=\lambda^*$, $\mathcal{L}^{\sigma}(x(t),\lambda^*)- \mathcal{L}^{\sigma}(x^*,\lambda^*)\geq 0$. Combining \eqref{eq_th1_15}, \eqref{eq_th1_16} and \eqref{eq_A8}, we get
\begin{eqnarray}\label{eq_th1_17}
	\dot{\mathcal{E}}^{\lambda^*,\rho}_{\epsilon}(t)&\leq& -\frac{\alpha}{2} t^{s}(\|\dot{x}(t)\|^2+\|\dot{\lambda}(t)\|^2)- \frac{\sigma \beta(t)}{2\delta}\|Ax(t)-b)\|^2\nonumber\\
	&& +(t^s\dot{\beta}(t)-(\frac{1}{\delta}-st^{s-1})\beta(t))(\mathcal{L}^{\sigma}(x(t),\lambda^*)- \mathcal{L}^{\sigma}(x^*,\lambda^*)) \\	
	&\leq& 0\nonumber
\end{eqnarray}
for all $t\geq t_1$.  ${\mathcal{E}}^{\lambda^*,\rho}_{\epsilon}(\cdot)$ is nonincreasing on $[t_1,+\infty)$,
\begin{equation*}\label{eq_th1_18}
	{\mathcal{E}}^{\lambda^*,\rho}_{\epsilon}(t)\leq{\mathcal{E}}^{\lambda^*,\rho}_{\epsilon}(t_1),\quad \forall\ t\geq t_1.
\end{equation*}
By the definition of ${\mathcal{E}}^{\lambda^*,\rho}(\cdot)$ and ${\mathcal{E}}^{\lambda^*,\rho}_{\epsilon}(\cdot)$, for all $t\geq t_1$ we have
\begin{equation*}\label{eq_th1_19}
	\frac{1}{2}\|\frac{1}{\delta}t^{-s/2}(x(t)-x^*)+t^{s/2}\dot{x}(t)\|^2\leq {\mathcal{E}}^{\lambda^*,\rho}_{\epsilon}(t_1)+ \int^t_{t_1}\langle\frac{1}{\delta}w^{-s/2}(x(w)-x^*)+w^{s/2}\dot{x}(w),w^{s/2} \epsilon(w)\rangle dw.
\end{equation*}
By similar arguments in {\bf Case s=0}, we obtain the boundedness of ${\mathcal{E}}^{\lambda^*,\rho}(\cdot)$ and ${\mathcal{E}}^{\lambda^*,\rho}_{\epsilon}(\cdot)$. Integrating inequality \eqref{eq_th1_17} on $[t_1,+\infty)$, we get the results $(i)-(ii)$.

Since ${\mathcal{E}}^{\lambda^*,\rho}(\cdot)$ is bounded, following from the definition of ${\mathcal{E}}^{\lambda^*,\rho}(\cdot)$, we obtain $(iv)$,
\[\sup_{t\geq t_0} \eta(t)\|x(t)-x^*\|^2<+\infty\]
and
\[\sup_{t\geq t_0} \|\frac{1}{\delta}t^{-s/2}(x(t)-x^*)+t^{s/2}\dot{x}(t)\|<+\infty.\]
This together with \eqref{eq_th1_13} and $s\in(0,1]$ implies
\[\sup_{t\geq t_0} \|x(t)-x^*\|<+\infty\]
and
\begin{eqnarray*}\label{eq_th1_20}
	 \sup_{t\geq t_0} t^{s/2}\|\dot{x}(t)\|&\leq&   \frac{1}{\delta}\sup_{t\geq t_0}t^{-s/2}\|x(t)-x^*\|+\sup_{t\geq t_0}\|\frac{1}{\delta}t^{-s/2}(x(t)-x^*)+t^{s/2}\dot{x}(t)\|\\
	& \leq&  \frac{1}{\delta t_0^{s/2}}\sup_{t\geq t_0}\|x(t)-x^*\|+\sup_{t\geq t_0} \|\frac{1}{\delta}t^{-s/2}(x(t)-x^*)+t^{s/2}\dot{x}(t)\|\\
	& <& +\infty.
\end{eqnarray*}
Similarly we have $\sup_{t\geq t_0} \|\lambda(t)-\lambda^*\|<+\infty$, $\sup_{t\geq t_0}t^{s/2}\|\dot{\lambda}(t)\|<+\infty$.  Then we obtain the boundedness of $(x(t),\lambda(t))$  and $(iii)$.
\end{proof}

\begin{remark}\label{re_th1_1}
	From Proposition \ref{pro_local_exist}, there exists a unique local solution $(x(t),$ $\lambda(t))$ of the  dynamic \eqref{dy_dy1} defined on a maximal interval $[t_0,T)$ with $T\leq +\infty$. If we pick a appropriate  $t_0>0$,  following from the proof process in Theorem \ref{th_th1} and $(iii)$, we can obtain $\sup_{t\in[t_0,T)}\|\dot{x}(t)\|+\|\dot{\lambda}(t)\|<+\infty$, and then  $T=+\infty$, the  existence and uniqueness of  global solution of the dynamic \eqref{dy_dy1} is established.
\end{remark}

\begin{remark}\label{re_th1_2}
	From Theorem \ref{th_th1}, we can see that for same damping $\alpha(t)=\alpha$, choosing another damping  $\delta(t)= \frac{\delta}{t^s}$ different,  the different rates of convergence can be obtained. Taking $A=0$, $b=0$, we can obtain the $\mathcal{O}(1/t^s\beta(t))$ convergence rate for dynamic \eqref{dy_heavy} under the assumption $t^s\dot{\beta}(t)\leq (\frac{1}{\delta}-st^{s-1})\beta(t)$ with  $\delta>0$, so Theorem \ref{th_th1}  complements the results  in \cite{Balhag2020}. The assumption $\int^{+\infty}_{t_0}\|\epsilon(t)\|dt<+\infty$ for perturbation $\epsilon(t)$ has been used in \cite{HarauxJ2012}  for asymptotic analysis of heavy ball dynamic.	
\end{remark}

\begin{remark}\label{re_th1_3}
	When $s=0$, choosing $\beta(t)\equiv 1$, then \eqref{ass_th1} is automatically  satisfied. Then from $(i)$, we have $\int^{+\infty}_{t_0} \mathcal{L}^{\sigma}(x(t),\lambda^*)-\mathcal{L}^{\sigma}(x^*,\lambda^*)dt <+\infty$.
Since $ \mathcal{L}^{\sigma}(\cdot,\lambda^*)$ is a convex function with respect to first variable, taking $\bar{x}(t)=\frac{\int^t_{t_0}x(s)ds}{t-t_0}$, we have
\begin{eqnarray*}
	\mathcal{L}^{\sigma}(\bar{x}(t),\lambda^*)-\mathcal{L}^{\sigma}(x^*,\lambda^*) &\leq& \frac{1}{t-t_0}\int^{t}_{t_0}\mathcal{L}^{\sigma}(x(s),\lambda^*)-\mathcal{L}^{\sigma}(x^*,\lambda^*)ds\\
	& \leq& \frac{1}{t-t_0}\int^{+\infty}_{t_0}\mathcal{L}^{\sigma}(x(s),\lambda^*)-\mathcal{L}^{\sigma}(x^*,\lambda^*)ds.
\end{eqnarray*}
Following from the definition of $\mathcal{L}^{\sigma}(\bar{x}(t),\lambda^*)$, we obtain $\mathcal{L}(\bar{x}(t),\lambda^*)-\mathcal{L}(x^*,\lambda^*)=\mathcal{O}(1/t)$ and $\|A\bar{x}(t)-b\|=\mathcal{O}(1/\sqrt{t})$, the $\mathcal{O}(1/t)$ ergodic convergence rate  corresponds to the  convergence rate of the  discrete heavy ball algorithm in \cite{Ghadimi2015}; for general $\beta(t)$ with $s=0$, the similar convergence rate results can be found in \cite{HeHF2021F}. When $s=1$, choosing $\beta(t)\equiv 1$ and $\delta\leq 1$, the $\mathcal{O}(1/t)$ rate of convergence also was investigated in \cite[Theorem 4.4]{HeHF2020} with $r=0$ for problem \eqref{question}, and it is  consistent with results of heavy ball dynamic and algorithm in \cite{Sun2019} for problem \eqref{min_fun}.
\end{remark}

In Theorem \ref{th_th1}, when $\lim_{t\to+\infty}t^s\beta(t)=+\infty$, we show the $\mathcal{O}(1/t^s\beta(t))$  convergence rate of Lagrangian function and $\mathcal{O}(1/t^{s/2}\sqrt{\beta(t)})$  convergence rate of constraint, then
\[|f({x}(t))-f(x^*)|\leq \mathcal{L}({x}(t),\lambda^*)-\mathcal{L}^{\sigma}(x^*,\lambda^*) + \|\lambda^*\|\|A{x}(t)-b\|=\mathcal{O}\left(\frac{1}{t^{s/2}\sqrt{\beta(t)}}\right).\]
 We only can obtain the  $\mathcal{O}(1/t^{s/2}\sqrt{\beta(t)})$  convergence rate of  objection function.

 In the next, we  will investigate the best convergence rates  of objection function and constrain for suitable $\beta(t)$.  When $s=0$, let $\dot{\beta}(t)=\frac{1}{\delta}\beta(t)$. then $\frac{\dot{\beta}(t)}{\beta(t)}=\frac{1}{\delta}$, integrating it on $[t_0,t]$, we have
 \[ \beta(t) = 	\frac{\beta(t_0)}{e^{t_0/\delta}}e^{\frac{t}{\delta}}. \]
In this case, from Theorem \ref{th_th1}, we can obtain the $\mathcal{O}(\frac{1}{e^{\frac{t}{2\delta}}})$ convergence rate of objective function and constraint. Let $\beta(t)=\mu e^{t/\delta}$ with $\mu>0$, we list the following improved convergence rate results, which also can be found in \cite[Proposition 6.2]{AttouchBCR2021} with $\epsilon(t)=0$.

 \begin{theorem}\label{th_th1_1}
	Let $\beta(t)=\mu e^{{t}/{\delta}}$ with $\mu>0$, $\alpha\delta>1$, $s=0$, $\sigma\geq 0$. Assume $\int^{+\infty}_{t_0}\|\epsilon(t)\|dt<+\infty$.  Let  $(x(t),\lambda(t))$ be a solution of dynamic \eqref{dy_dy1} and  $(x^*,\lambda^*)\in\Omega$. Then:
\[ |f(x(t))-f(x^*)| =\mathcal{O}(\frac{1}{ e^{{t}/{\delta}}}),\quad \|Ax(t)-b\| =\mathcal{O}(\frac{1}{{e^{{t}/{\delta}}}}).\]
\end{theorem}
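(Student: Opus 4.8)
The plan is to exploit the identity $\dot\beta(t)-\frac{1}{\delta}\beta(t)\equiv 0$ satisfied by $\beta(t)=\mu e^{t/\delta}$: this coefficient is exactly the one that, in the proof of Theorem~\ref{th_th1}, forced the choice $\lambda=\lambda^*$, and its vanishing makes the Lyapunov estimate hold \emph{uniformly over all multipliers} $\lambda\in\mathcal{H}_2$. A multiplier-shift argument then upgrades the feasibility rate from the $\mathcal{O}(e^{-t/(2\delta)})$ supplied by Theorem~\ref{th_th1}(iv) to $\mathcal{O}(e^{-t/\delta})$. To begin, note that the hypotheses here are precisely those of Theorem~\ref{th_th1} with $s=0$ (indeed \eqref{ass_th1} holds with equality for $\beta(t)=\mu e^{t/\delta}$, and $\lim_{t\to+\infty}t^{s}\beta(t)=+\infty$), so all of its conclusions are available: $(x(t),\lambda(t))$ and $(\dot x(t),\dot\lambda(t))$ are bounded, hence $M:=\sup_{t\ge t_0}\|\frac{1}{\delta}(x(t)-x^*)+\dot x(t)\|<+\infty$, and $0\le\mathcal{L}(x(t),\lambda^*)-\mathcal{L}(x^*,\lambda^*)=\mathcal{O}(1/\beta(t))=\mathcal{O}(e^{-t/\delta})$. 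Since $f(x(t))-f(x^*)=\bigl(\mathcal{L}(x(t),\lambda^*)-\mathcal{L}(x^*,\lambda^*)\bigr)-\langle\lambda^*,Ax(t)-b\rangle$, it therefore suffices to prove $\|Ax(t)-b\|=\mathcal{O}(e^{-t/\delta})$.

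\emph{A $\lambda$-uniform energy estimate.} I would carry out the computation of the proof of Theorem~\ref{th_th1}, Case $s=0$, along the same lines but with $\lambda^*$ replaced by an arbitrary $\lambda\in\mathcal{H}_2$, keeping $\rho=0$, $\theta(t)\equiv\frac{1}{\delta}$, $\eta(t)\equiv\frac{\alpha\delta-1}{2\delta^2}$. The only place that proof uses $\lambda=\lambda^*$ is in discarding the term $\bigl(\dot\beta(t)-\frac{1}{\delta}\beta(t)\bigr)\bigl(\mathcal{L}^{\sigma}(x(t),\lambda)-\mathcal{L}^{\sigma}(x^*,\lambda)\bigr)$ by its sign; for $\beta(t)=\mu e^{t/\delta}$ this coefficient is identically zero, so the term drops out for every $\lambda$, and one obtains $\dot{\mathcal{E}}^{\lambda,\rho}_{\epsilon}(t)\le\bigl(\frac{1}{\delta}-\alpha\bigr)\bigl(\|\dot x(t)\|^2+\|\dot\lambda(t)\|^2\bigr)-\frac{\sigma\beta(t)}{2\delta}\|Ax(t)-b\|^2\le 0$ for all $t\ge t_0$, using $\alpha\delta>1$ and $\sigma\ge 0$. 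Hence $\mathcal{E}^{\lambda,\rho}_{\epsilon}(t)\le\mathcal{E}^{\lambda,\rho}_{\epsilon}(t_0)=\mathcal{E}^{\lambda,\rho}(t_0)$; bounding the perturbation integral in \eqref{eq_A_ex} by $M\int_{t_0}^{+\infty}\|\epsilon(w)\|\,dw=:D<+\infty$ and discarding the nonnegative summands $\mathcal{E}_1(t)+\mathcal{E}_2(t)$ (nonnegative since $\eta>0$) leaves
\[
\beta(t)\bigl(\mathcal{L}^{\sigma}(x(t),\lambda)-\mathcal{L}^{\sigma}(x^*,\lambda)\bigr)\le\mathcal{E}^{\lambda,\rho}(t_0)+D\qquad\text{for every }t\ge t_0\text{ and every }\lambda\in\mathcal{H}_2.
\]

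\emph{Multiplier shift.} Write $\lambda=\lambda^*+\zeta$. Using $\mathcal{L}^{\sigma}(x(t),\lambda)-\mathcal{L}^{\sigma}(x^*,\lambda)=\bigl(\mathcal{L}^{\sigma}(x(t),\lambda^*)-\mathcal{L}^{\sigma}(x^*,\lambda^*)\bigr)+\langle\zeta,Ax(t)-b\rangle$ together with $\mathcal{L}^{\sigma}(x(t),\lambda^*)-\mathcal{L}^{\sigma}(x^*,\lambda^*)\ge 0$, the last display yields $\beta(t)\langle\zeta,Ax(t)-b\rangle\le\mathcal{E}^{\lambda^*+\zeta,\rho}(t_0)+D$. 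By the definition of $\mathcal{E}^{\lambda,\rho}(t_0)$ in \eqref{eq_A1}, the right-hand side is a quadratic polynomial in $\zeta$ with $t$-independent coefficients whose quadratic part equals $\bigl(\frac{1}{2\delta^2}+\frac{\eta}{2}\bigr)\|\zeta\|^2$. For a fixed $t\ge t_0$ with $Ax(t)\ne b$, taking the unit vector $\zeta=(Ax(t)-b)/\|Ax(t)-b\|$ gives $\langle\zeta,Ax(t)-b\rangle=\|Ax(t)-b\|$ and bounds the right-hand side by a constant $C$ independent of $t$ (depending only on $x_0,\lambda_0,u_0,v_0,\lambda^*,\mu,\delta,\alpha$ and $\int_{t_0}^{+\infty}\|\epsilon\|$); when $Ax(t)=b$ the estimate is trivial. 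Hence $\|Ax(t)-b\|\le C/\beta(t)=\mathcal{O}(e^{-t/\delta})$, and combining with the first paragraph, $|f(x(t))-f(x^*)|\le\bigl|\mathcal{L}(x(t),\lambda^*)-\mathcal{L}(x^*,\lambda^*)\bigr|+\|\lambda^*\|\,\|Ax(t)-b\|=\mathcal{O}(e^{-t/\delta})$.

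The main obstacle is conceptual: one has to notice that the exponential time scaling kills exactly the term of $\dot{\mathcal{E}}$ that previously restricted the analysis to $\lambda=\lambda^*$, so that the bound on $\beta(t)\bigl(\mathcal{L}^{\sigma}(x(t),\lambda)-\mathcal{L}^{\sigma}(x^*,\lambda)\bigr)$ becomes uniform in $\lambda$; after that, the only thing to check is that $\mathcal{E}^{\lambda^*+\zeta,\rho}(t_0)$ depends on $\zeta$ through a fixed quadratic, so that evaluating at a unit $\zeta$ yields a $t$-independent constant. Everything else is routine bookkeeping with the bounds already secured by Theorem~\ref{th_th1}.
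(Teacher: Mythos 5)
Your proof is correct and takes essentially the same route as the paper's: the key observation that $\beta(t)=\mu e^{t/\delta}$ makes the coefficient of the Lagrangian term in $\dot{\mathcal{E}}^{\lambda,\rho}_{\epsilon}$ vanish identically, so the energy decay holds uniformly in $\lambda\in\mathcal{H}_2$, is exactly the paper's argument, and your multiplier shift by a unit vector $\zeta$ is an inline rederivation of the paper's Lemma \ref{le_A2} (whose proof is precisely that choice of test multiplier), differing only cosmetically in that the paper works on the ball $\|\lambda\|\le\varrho$ with $\varrho>\|\lambda^*\|$ and then invokes $f(x(t))-f(x^*)\ge-\|\lambda^*\|\,\|Ax(t)-b\|$, whereas you center at $\lambda^*$ and use $\mathcal{L}^{\sigma}(x(t),\lambda^*)-\mathcal{L}^{\sigma}(x^*,\lambda^*)\ge 0$. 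One small imprecision: you should not import the conclusions of Theorem \ref{th_th1} wholesale, since that theorem assumes $\sigma>0$ while here $\sigma\ge 0$; however, the only two facts you actually use ($M<+\infty$ and the $\mathcal{O}(1/\beta(t))$ rate at $\lambda=\lambda^*$) follow from your own $\lambda$-uniform estimate specialized to $\lambda=\lambda^*$, so this is harmless.
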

\begin{proof}
Given $\lambda\in\mathcal{H}_2$, recall the energy  functions $\mathcal{E}^{\lambda,\rho}(t)$ and $\mathcal{E}^{\lambda,\rho}_{\epsilon}(t)$  from Theorem \ref{th_th1} with $\beta(t)=\mu e^{{t}/{\delta}},\ s=\rho=0$. Then
\[ t^{\rho}\dot{\beta}(t)+(2\rho t^{\rho-1}-\theta(t))\beta(t) =0,\]
this together with \eqref{eq_th1_1} and \eqref{eq_A8} yields
\begin{eqnarray}\label{eq_th1_1_1}
	\dot{\mathcal{E}}^{\lambda,\rho}_{\epsilon}(t)\leq (\frac{1}{\delta}-\alpha)(\|\dot{x}(t)\|^2+\|\dot{\lambda}(t)\|^2)- \frac{\sigma \beta(t)}{2\delta}\|Ax(t)-b)\|^2 \leq 0, \quad \forall\ t\geq t_0, \lambda\in\mathcal{H}_2.
\end{eqnarray}
So for any $\lambda\in\mathcal{H}_2$, ${\mathcal{E}}^{\lambda,\rho}_{\epsilon}(\cdot)$ is nonincreasing on $[t_0,+\infty)$ such that,
\[{\mathcal{E}}^{\lambda,\rho}_{\epsilon}(t)\leq  {\mathcal{E}}^{\lambda,\rho}_{\epsilon}(t_0),\quad \forall\ t\geq t_0.\]
By the definition of ${\mathcal{E}}^{\lambda,\rho}_{\epsilon}(\cdot)$ and $\sigma\geq 0$, we have
\[f(x(t))-f(x^*)+\langle \lambda, Ax(t)-b\rangle \leq \frac{1}{\mu e^{{t}/{\delta}}}\left(\mathcal{E}^{\lambda,\epsilon}(t_0)+\sup_{t\geq t_0}\|\frac{1}{\delta}(x(t)-x^*)+\dot{x}(t)\|\int^{+\infty}_{t_0}\|\epsilon(t)\|dt\right)\]
for any $\lambda\in\mathcal{H}_1$ and $t\geq t_0$. Taking $\varrho>\|\lambda^*\|$, it follows from Lemma \ref{le_A2} that
\begin{eqnarray}\label{eq_th1_1_2}
	f(x(t))-f(x^*)+\varrho\| Ax(t)-b\| \leq \frac{1}{\mu e^{{t}/{\delta}}}\left(\sup_{\|\lambda\|\leq \varrho}\mathcal{E}^{\lambda,\epsilon}(t_0)+\sup_{t\geq t_0}\|\frac{1}{\delta}(x(t)-x^*)+\dot{x}(t)\|\int^{+\infty}_{t_0}\|\epsilon(t)\|dt\right).
\end{eqnarray}
Denote $C=\sup_{\|\lambda\|\leq \varrho}\mathcal{E}^{\lambda,\epsilon}(t_0)+\sup_{t\geq t_0}\|\frac{1}{\delta}(x(t)-x^*)+\dot{x}(t)\|\int^{+\infty}_{t_0}\|\epsilon(t)\|dt$. Since $\varrho>\|\lambda^*\|$,  $\sup_{\|\lambda\|\leq \varrho}\mathcal{E}^{\lambda,\epsilon}(t_0)\geq \mathcal{E}^{\lambda^*,\epsilon}(t_0)\geq 0$, this together with \eqref{eq_th1_7} yields  $0\leq C<+\infty$.
Following from \eqref{saddle_point}, we have
 \[ f(x(t))-f(x^*)\geq -\|\lambda^*\|\|Ax(t)-b\|,\]
this together with \eqref{eq_th1_1_2} implies
 \[\|Ax(t)-b\|\leq \frac{C}{\mu(\varrho-\lambda^*) e^{{t}/{\delta}}} \]
 and then
 \begin{eqnarray*}
 \frac{-\|\lambda^*\|C}{\mu(\varrho-\lambda^*) e^{{t}/{\delta}}} \leq f(x(t))-f(x^*) \leq \frac{C}{\mu e^{{t}/{\delta}}}.
  \end{eqnarray*}
 We  obtain results from above inequalities.
\end{proof}

\begin{remark}\label{re_th11_1}
	When $s=0$ and $\beta(t)=\mu e^{{t}/{\delta}}$, Theorem \ref{th_th1} obtains $\mathcal{O}(\frac{1}{e^{{t}/{2\delta}}})$ convergence rate of objective function and constraint, it is consistent with convergence rates of dynamic \eqref{dy_ex1}, which is derived from dynamic \eqref{dy_zeng}.  Theorem \ref{th_th1_1} shows that the rate of convergence is actually $\mathcal{O}(\frac{1}{e^{{t}/{\delta}}})$. Then we can obtain the linear convergence rate of dynamic \eqref{dy_dy1} merely under the convexity assumption of $f$, and in this case we also allow the penalty parameter $\sigma$ of augmented Lagrangian function to be zero, which is different in Theorem \ref{th_th1}.
\end{remark}

When $s\in(0,1)$, let $t^s\dot{\beta}(t)= (\frac{1}{\delta}-s t^{s-1})\beta(t)$.
It leads
 \[\beta(t) = \frac{t_0^s\beta(t_0)}{e^{\frac{1}{\delta(1-s)}t_0^{1-s}}}\frac{e^{\frac{1}{\delta(1-s)}t^{1-s}}}{t^s}.\]
 Take $\beta(t) =\mu \frac{e^{\frac{1}{\delta(1-s)}t^{1-s}}}{t^s}$ with $\mu>0$. We investigate the following optimal results.

\begin{theorem}\label{th_th1_2}
	Let $\beta(t) =\mu \frac{e^{\frac{1}{\delta(1-s)}t^{1-s}}}{t^s}$ with $\mu>0,\ s\in(0,1),\ \sigma\geq 0$ . Suppose $\int^{+\infty}_{t_0}t^{s/2}\|\epsilon(t)\|dt<+\infty$. Let   $(x(t),\lambda(t))$ be a solution of dynamic \eqref{dy_dy1} and $(x^*,\lambda^*)\in\Omega$. Then
\[ |f(x(t))-f(x^*)| =\mathcal{O}\left(\frac{1}{e^{\frac{1}{\delta(1-s)}t^{1-s}}}\right),\quad \|Ax(t)-b\| =\mathcal{O}\left(\frac{1}{e^{\frac{1}{\delta(1-s)}t^{1-s}}}\right).\]
\end{theorem}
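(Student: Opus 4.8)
The plan is to mimic the structure of the proof of Theorem~\ref{th_th1_1}, replacing the exponential scaling $\mu e^{t/\delta}$ by $\beta(t)=\mu\,e^{\frac{1}{\delta(1-s)}t^{1-s}}/t^{s}$ and the weight $\rho=0$ by $\rho=\tfrac{s}{2}$ as in the $s\in(0,1]$ branch of Theorem~\ref{th_th1}. First I would recall the energy functions $\mathcal{E}^{\lambda,\rho}(t)$ and $\mathcal{E}^{\lambda,\rho}_{\epsilon}(t)$ from \eqref{eq_A1}, \eqref{eq_A_ex} with $r=0$, $\rho=\tfrac{s}{2}$ and the same $\theta(t)=\tfrac1\delta t^{-s/2}$, $\eta(t)=\tfrac1\delta(\alpha-\tfrac1\delta t^{-s})$ used in Theorem~\ref{th_th1}. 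The crucial point is that the specific choice $\beta(t)=\mu\,e^{\frac{1}{\delta(1-s)}t^{1-s}}/t^{s}$ solves $t^s\dot\beta(t)=(\tfrac1\delta-st^{s-1})\beta(t)$ exactly, so in \eqref{eq_th1_16} the coefficient $t^{\rho}\bigl(t^{\rho}\dot\beta(t)+(2\rho t^{\rho-1}-\theta(t))\beta(t)\bigr)=t^s\dot\beta(t)-(\tfrac1\delta-st^{s-1})\beta(t)$ vanishes identically, and hence the Lagrangian term drops out of the derivative estimate \eqref{eq_th1_17} without needing $\sigma>0$; this is exactly what allows $\sigma\ge 0$ here.

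Next I would reproduce the descent argument: pick $t_1\ge t_0$ so that \eqref{eq_th1_12}, and hence $\eta(t)\ge\tfrac{\alpha}{2\delta}>0$ and \eqref{eq_th1_15}, hold for $t\ge t_1$; then from \eqref{eq_A8} together with the vanishing of the $\beta$-coefficient we get, for $t\ge t_1$,
\[
\dot{\mathcal{E}}^{\lambda,\rho}_{\epsilon}(t)\le -\frac{\alpha}{2}t^{s}\bigl(\|\dot x(t)\|^2+\|\dot\lambda(t)\|^2\bigr)-\frac{\sigma\beta(t)}{2\delta}\|Ax(t)-b\|^2\le 0,
\]
valid for \emph{every} $\lambda\in\mathcal{H}_2$, not just $\lambda=\lambda^*$. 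Monotonicity of $\mathcal{E}^{\lambda,\rho}_{\epsilon}(\cdot)$, together with the Cauchy--Schwarz estimate on the perturbation integral and Lemma~\ref{le_A1} applied to $\mu(t)=\|\tfrac1\delta t^{-s/2}(x(t)-x^*)+t^{s/2}\dot x(t)\|$ (using $\int_{t_0}^{+\infty}t^{s/2}\|\epsilon(t)\|dt<+\infty$), yields the boundedness of $\mathcal{E}^{\lambda,\rho}(\cdot)$ and of $\sup_{t\ge t_0}\|\tfrac1\delta t^{-s/2}(x(t)-x^*)+t^{s/2}\dot x(t)\|$, exactly as in Theorem~\ref{th_th1}.

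Then I would convert the bound on $\mathcal{E}^{\lambda,\rho}_{\epsilon}(t)$ into the rate. From $\mathcal{E}_0(t)=t^{s}\beta(t)(\mathcal{L}^{\sigma}(x(t),\lambda)-\mathcal{L}^{\sigma}(x^*,\lambda))$ and $t^{s}\beta(t)=\mu\,e^{\frac{1}{\delta(1-s)}t^{1-s}}$, the inequality $\mathcal{E}^{\lambda,\rho}_{\epsilon}(t)\le\mathcal{E}^{\lambda,\rho}_{\epsilon}(t_1)$ gives
\[
f(x(t))-f(x^*)+\langle\lambda,Ax(t)-b\rangle\le\frac{1}{\mu\,e^{\frac{1}{\delta(1-s)}t^{1-s}}}\Bigl(\mathcal{E}^{\lambda,\rho}(t_1)+\sup_{t\ge t_1}\mu(t)\!\int_{t_1}^{+\infty}\!w^{s/2}\|\epsilon(w)\|dw\Bigr)
\]
for all $\lambda$ (using $\sigma\ge 0$ to drop $\tfrac\sigma2\|Ax(t)-b\|^2$). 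Taking $\sup_{\|\lambda\|\le\varrho}$ with $\varrho>\|\lambda^*\|$ and invoking Lemma~\ref{le_A2} produces $f(x(t))-f(x^*)+\varrho\|Ax(t)-b\|\le C\,e^{-\frac{1}{\delta(1-s)}t^{1-s}}$ with $C<+\infty$ finite (since $\sup_{\|\lambda\|\le\varrho}\mathcal{E}^{\lambda,\rho}(t_1)\ge\mathcal{E}^{\lambda^*,\rho}(t_1)\ge 0$ and $\mu(t)$ is bounded); combining with the saddle-point lower bound $f(x(t))-f(x^*)\ge-\|\lambda^*\|\,\|Ax(t)-b\|$ from \eqref{saddle_point} isolates $\|Ax(t)-b\|=\mathcal{O}(e^{-\frac{1}{\delta(1-s)}t^{1-s}})$ and then $|f(x(t))-f(x^*)|=\mathcal{O}(e^{-\frac{1}{\delta(1-s)}t^{1-s}})$. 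The main obstacle is bookkeeping rather than conceptual: one must check that the $t_1$-dependent constants are genuinely finite, which requires first establishing boundedness of the trajectory and of $\mu(t)$ exactly as in the $s\in(0,1]$ case of Theorem~\ref{th_th1}, and one must verify carefully that with this particular $\beta$ the identities \eqref{eq_A3}--\eqref{eq_A6} underlying \eqref{eq_A8} still hold (they do, since $\theta,\eta$ are unchanged and $\theta\dot\theta+\tfrac{\dot\eta}{2}=0$ as computed there). Everything else is a direct transcription of the arguments already given.
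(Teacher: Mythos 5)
Your proposal is correct and follows essentially the same route as the paper: the paper's own (much terser) proof likewise takes $\rho=s/2$ with the same $\theta,\eta$, observes that this $\beta$ makes $t^{\rho}\dot{\beta}(t)+(2\rho t^{\rho-1}-\theta(t))\beta(t)$ vanish so that \eqref{eq_A8} gives a descent valid for every $\lambda\in\mathcal{H}_2$, and then refers back to the Lemma~\ref{le_A2} plus saddle-point argument of Theorem~\ref{th_th1_1} to separate the rates for $|f(x(t))-f(x^*)|$ and $\|Ax(t)-b\|$. You have simply made explicit the bookkeeping that the paper compresses into ``by similar arguments.''
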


\begin{proof}
Given $\lambda\in\mathcal{H}_2$, recall the energy  functions $\mathcal{E}^{\lambda,\rho}(t)$ and $\mathcal{E}^{\lambda,\rho}_{\epsilon}(t)$  from Theorem \ref{th_th1} with $\beta(t)=\mu \frac{e^{\frac{1}{\delta(1-s)}t^{1-s}}}{t^s},\ s\in(0,1),\ \rho=\frac{s}{2}$. Then
\[ t^{\rho}\dot{\beta}(t)+(2\rho t^{\rho-1}-\theta(t))\beta(t) =0,\]
this together with \eqref{eq_th1_1} and \eqref{eq_A8} yields
\begin{eqnarray}\label{eq_th2_9}
	\dot{\mathcal{E}}^{\lambda,\rho}_{\epsilon}(t)\leq -\frac{\alpha}{2} t^{s}(\|\dot{x}(t)\|^2+\|\dot{\lambda}(t)\|^2)- \frac{\sigma \beta(t)}{2\delta}\|Ax(t)-b)\|^2 \leq 0, \quad \forall t\geq t_1, \lambda\in\mathcal{H}_2,
\end{eqnarray}
for some $t_1\geq t_0$. By similar arguments in Theorem \ref{th_th1}, we obtain the results.
\end{proof}

When $s=1$, let $t\dot{\beta}(t)= (\frac{1}{\delta}-1)\beta(t)$.
It leads
 \[\beta(t) = \frac{\beta(t_0)}{t_0^{\frac{1}{\delta}-1}}t^{\frac{1}{\delta}-1}.\]
Taking $\beta(t) =\mu t^{\frac{1}{\delta}-1}$ with $\mu>0$. By similar arguments in Theorem \ref{th_th1_1} and Theorem \ref{th_th1_2}, we obtain the following results.

\begin{theorem}\label{th_th1_3}
Let $\beta(t)= \mu t^{\frac{1}{\delta}-1}$ with $\mu>0$, $\delta\leq 1$, $s=1$, $\sigma\geq 0$. Suppose $\int^{+\infty}_{t_0}t^{1/2}\|\epsilon(t)\|dt<+\infty$. Let   $(x(t),\lambda(t))$ be a solution of dynamic \eqref{dy_dy1} and $(x^*,\lambda^*)\in\Omega$. We have
\[ |f(x(t))-f(x^*)| =\mathcal{O}(\frac{1}{t^{1/\delta}}),\quad \|Ax(t)-b\| =\mathcal{O}(\frac{1}{t^{1/\delta}}).\]
\end{theorem}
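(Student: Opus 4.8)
The plan is to mirror the proofs of Theorems~\ref{th_th1_1} and~\ref{th_th1_2}. Fix $(x^*,\lambda^*)\in\Omega$ and, for each $\lambda\in\mathcal H_2$, use the energy functions $\mathcal E^{\lambda,\rho}(t)$, $\mathcal E^{\lambda,\rho}_\epsilon(t)$ of \eqref{eq_A1} and \eqref{eq_A_ex} with $r=0$, $s=1$, $\rho=\tfrac12$, and $\theta(t)=\tfrac1\delta t^{-1/2}$, $\eta(t)=\tfrac1\delta(\alpha-\tfrac1\delta t^{-1})$, exactly as in the proof of Theorem~\ref{th_th1}. The first step is to note that the present choice $\beta(t)=\mu t^{1/\delta-1}$ turns assumption \eqref{ass_th1} into an equality when $s=1$, since $t\dot\beta(t)=\mu(\tfrac1\delta-1)t^{1/\delta-1}=(\tfrac1\delta-1)\beta(t)$; consequently the factor $t^{\rho}\bigl(t^{\rho}\dot\beta(t)+(2\rho t^{\rho-1}-\theta(t))\beta(t)\bigr)=t^s\dot\beta(t)-(\tfrac1\delta-st^{s-1})\beta(t)$ appearing in \eqref{eq_A8} vanishes identically. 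Combining this with the sign estimate \eqref{eq_th1_15}, valid for $t\ge t_1$ where $t_1\ge t_0$ is chosen as in \eqref{eq_th1_12}--\eqref{eq_th1_13} so that $\eta(t)\ge\tfrac{\alpha}{2\delta}>0$, one gets from \eqref{eq_A8}
\[\dot{\mathcal E}^{\lambda,\rho}_\epsilon(t)\le-\frac{\alpha}{2}t\bigl(\|\dot x(t)\|^2+\|\dot\lambda(t)\|^2\bigr)-\frac{\sigma\beta(t)}{2\delta}\|Ax(t)-b\|^2\le0,\qquad t\ge t_1,\ \lambda\in\mathcal H_2,\]
so each $\mathcal E^{\lambda,\rho}_\epsilon(\cdot)$ is nonincreasing on $[t_1,+\infty)$.

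Next I would run the special case $\lambda=\lambda^*$ first, as in the $s\in(0,1]$ part of Theorem~\ref{th_th1}: there $\mathcal L^\sigma(x(t),\lambda^*)-\mathcal L^\sigma(x^*,\lambda^*)\ge0$, and the Cauchy--Schwarz inequality together with Lemma~\ref{le_A1} applied to $\mu(t)=\|\tfrac1\delta t^{-1/2}(x(t)-x^*)+t^{1/2}\dot x(t)\|$ and the hypothesis $\int_{t_0}^{+\infty}t^{1/2}\|\epsilon(t)\|\,dt<+\infty$ yields $\sup_{t\ge t_1}\|\tfrac1\delta t^{-1/2}(x(t)-x^*)+t^{1/2}\dot x(t)\|<+\infty$. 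Hence the perturbation integral $\int_{t_0}^{t}\langle\theta(w)(x(w)-x^*)+w^\rho\dot x(w),w^\rho\epsilon(w)\rangle\,dw$ is bounded, say by $K<+\infty$, uniformly in $t$. Now for an arbitrary $\lambda\in\mathcal H_2$, since $\mathcal E_1(t),\mathcal E_2(t)\ge0$ for $t\ge t_1$ (because $\eta(t)\ge0$), the relation $\mathcal E^{\lambda,\rho}_\epsilon(t)\le\mathcal E^{\lambda,\rho}_\epsilon(t_1)$ gives $\mathcal E_0(t)\le\mathcal E^{\lambda,\rho}_\epsilon(t_1)+K$; recalling $\mathcal E_0(t)=t^{2\rho}\beta(t)\bigl(\mathcal L^\sigma(x(t),\lambda)-\mathcal L^\sigma(x^*,\lambda)\bigr)$, $t^{2\rho}\beta(t)=\mu t^{1/\delta}$ and $\sigma\ge0$, this reads
\[\mu\,t^{1/\delta}\bigl(f(x(t))-f(x^*)+\langle\lambda,Ax(t)-b\rangle\bigr)\le\mathcal E^{\lambda,\rho}_\epsilon(t_1)+K,\qquad t\ge t_1,\ \lambda\in\mathcal H_2.\]

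Finally, I would fix $\varrho>\|\lambda^*\|$, take the supremum over $\|\lambda\|\le\varrho$, and invoke Lemma~\ref{le_A2} to replace $\langle\lambda,Ax(t)-b\rangle$ by $\varrho\|Ax(t)-b\|$, obtaining $f(x(t))-f(x^*)+\varrho\|Ax(t)-b\|\le C/(\mu t^{1/\delta})$ with $C:=\sup_{\|\lambda\|\le\varrho}\mathcal E^{\lambda,\rho}_\epsilon(t_1)+K$; as in Theorem~\ref{th_th1_1}, $\mathcal E^{\lambda^*,\rho}_\epsilon(t_1)\ge0$ and $\varrho>\|\lambda^*\|$ force $0\le C<+\infty$. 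Combining this bound with the saddle-point inequality $f(x(t))-f(x^*)\ge-\|\lambda^*\|\,\|Ax(t)-b\|$ from \eqref{saddle_point} yields $\|Ax(t)-b\|\le C/\bigl(\mu(\varrho-\|\lambda^*\|)t^{1/\delta}\bigr)$ and then the two-sided estimate $-\tfrac{\|\lambda^*\|C}{\mu(\varrho-\|\lambda^*\|)t^{1/\delta}}\le f(x(t))-f(x^*)\le\tfrac{C}{\mu t^{1/\delta}}$, which is exactly the claim. I expect the main obstacle to be bookkeeping rather than any single estimate: one must check that the appendix computation \eqref{eq_A8} and the accompanying inequalities \eqref{eq_A3}--\eqref{eq_A6} are legitimately available for $s=1$ under the standing restriction $\delta\le1$, and one must keep in mind that the energy dissipation only holds on $[t_1,+\infty)$ — harmless for an $\mathcal O(\cdot)$ conclusion but it has to be stated with care.
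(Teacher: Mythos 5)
Your proposal is correct and is exactly the argument the paper intends: the paper omits the proof of Theorem~\ref{th_th1_3}, saying only that it follows ``by similar arguments'' to Theorems~\ref{th_th1_1} and~\ref{th_th1_2}, and your write-up supplies precisely those arguments (the choice $\beta(t)=\mu t^{1/\delta-1}$ kills the $\mathcal{L}^{\sigma}$-term in \eqref{eq_A8} so that $\dot{\mathcal E}^{\lambda,\rho}_{\epsilon}\le 0$ for every $\lambda$, the perturbation integral is bounded via Lemma~\ref{le_A1} at $\lambda=\lambda^*$, and Lemma~\ref{le_A2} plus the saddle-point inequality convert the uniform bound into the two-sided $\mathcal O(t^{-1/\delta})$ estimates). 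The caveats you flag (validity of \eqref{eq_A3}--\eqref{eq_A6} for $s=1$ and dissipation only on $[t_1,+\infty)$) are handled the same way as in the $s\in(0,1]$ case of Theorem~\ref{th_th1}, so no gap remains.
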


\begin{remark}\label{re_th13_1}
When $s=1$, taking $\delta=1$ and $\beta(t)\equiv 1$, from Theorem \ref{th_th1_3}, we obtain $\mathcal{O}(\frac{1}{t})$ convergence rates of objective function and constraint, which improves results in Theorem \ref{th_th1} with  time scaling  $\beta(t)\equiv 1$.
\end{remark}

\begin{remark}\label{re_th13_2}
	For damping $\alpha(t) =\alpha$, $\delta=\frac{\delta}{t^s}$ with $s\in[0,1]$, the $\mathcal{O}(1/t^s\beta(t))$ convergence rate in Theorem \ref{th_th1}  shows that convergence results is better as $s$  larger in $[0,1]$. Conversely, following from Theorem \ref{th_th1_1}-Theorem \ref{th_th1_3}, when $s$ is smaller in $[0,1]$, we can obtain  better optimal convergence rates with suitable $\beta(t)$.
\end{remark}

\subsection{Case $r\in(0,1),\ s\in[r,1]$}
In the case $r\in(0,1)$, $s\in[r,1]$, the dynamic \eqref{dy_dynamic} reads:
\begin{equation}\label{dy_dy2}
	\begin{cases}
		\ddot{x}(t)+\frac{\alpha}{t^r}\dot{x}(t) = -\beta(t)(\nabla f(x(t))+A^T(\lambda(t)+\delta t^s\dot{\lambda}(t))+\sigma A^T(Ax(t)-b))+\epsilon(t) ,\\
		\ddot{\lambda}(t)+\frac{\alpha}{t^r}\dot{\lambda}(t) = \beta(t)(A(x(t)+\delta t^s\dot{x}(t))-b).
	\end{cases}
\end{equation}
with $\alpha>0,\ \delta>0,\ \sigma\geq 0,\ t\geq t_0> 0$. We will investigate the convergence properties of dynamic \eqref{dy_dy2}.

\begin{theorem}\label{th_th2}
	Assume that $\beta:[t_0,+\infty)\to(0,+\infty)$ is continuous differentiable function with
\begin{equation}\label{ass_th2}
	t^s\dot{\beta}(t)\leq (\frac{1}{\delta}-\tau t^{s-1})\beta(t)
\end{equation}
and  $\epsilon:[t_0,+\infty)\to\mathcal{H}_1$ satisfies
\begin{equation*}
\int^{+\infty}_{t_0}t^{\tau/2}\|\epsilon(t)\|dt<+\infty,
\end{equation*}
where $\tau\in(0,r+s)$.
Assume $\alpha\delta>1$ when $s=r$;  $\tau\delta\leq 1$ when $s=1$.
Let $(x(t),\lambda(t))$ be a global solution of the dynamic \eqref{dy_dy2} and $(x^*,\lambda^*)\in\Omega$. The following results hold:
\begin{itemize}
\item [(i)] $\int^{+\infty}_{t_0} ((\frac{1}{\delta}t^{\tau-s}-\tau t^{\tau-1})\beta(t)-t^{\tau}\dot{\beta}(t))(\mathcal{L}^{\sigma}(x(t),\lambda^*)-\mathcal{L}^{\sigma}(x^*,\lambda^*)) dt <+\infty$.	
\item [(ii)] $\int^{+\infty}_{t_0}t^{\tau-s}\beta(t)\|Ax(t)-b\|^2 dt <+\infty$,\  $\int^{+\infty}_{t_0}t^{\tau-r}(\|\dot{x}(t)\|^2+\|\dot{\lambda}(t)\|^2)dt <+\infty$.
\item [(iii)]  $\|\dot{x}(t)\|+\|\dot{\lambda}(t)\| =\mathcal{O}(\frac{1}{t^{\tau/2}})$.
\item [(iv)] When $\lim_{t\to+\infty}t^{\tau}\beta(t) = +\infty,$
	\[ \mathcal{L}(x(t),\lambda^*)-\mathcal{L}(x^*,\lambda^*) =\mathcal{O}(\frac{1}{t^{\tau}\beta(t)}), \quad \|Ax(t)-b\| =\mathcal{O}(\frac{1}{t^{\tau/2}\sqrt{\beta(t)}}).\]
\end{itemize}
\end{theorem}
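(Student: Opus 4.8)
The plan is to run the Lyapunov scheme of Theorem \ref{th_th1}, but now decoupling the scaling exponent $\rho$ from $s$: in \eqref{eq_A_ex}--\eqref{eq_A1} I would take $\rho=\tfrac{\tau}{2}$, so that $\mathcal{E}_0(t)=t^{\tau}\beta(t)\big(\mathcal{L}^{\sigma}(x(t),\lambda)-\mathcal{L}^{\sigma}(x^*,\lambda)\big)$ produces the factor $t^{\tau}\beta(t)$ in the rates, together with
\[
\theta(t)=\tfrac{1}{\delta}\,t^{\tau/2-s},\qquad
\eta(t)=\alpha\,\theta(t)\,t^{\rho-r}-\theta(t)^2-\rho\,\theta(t)\,t^{\rho-1}-\dot\theta(t)\,t^{\rho}.
\]
The choice of $\theta$ is dictated by the requirement that the coefficient of $\mathcal{L}^{\sigma}(x(t),\lambda)-\mathcal{L}^{\sigma}(x^*,\lambda)$ in $\dot{\mathcal{E}}^{\lambda,\rho}(t)$ equal $t^{\tau}\dot\beta(t)-(\tfrac{1}{\delta}t^{\tau-s}-\tau t^{\tau-1})\beta(t)$, which is $\le 0$ precisely by hypothesis \eqref{ass_th2}; the choice of $\eta$ is the unique one cancelling the sign-indefinite cross terms $\langle x(t)-x^*,\dot x(t)\rangle$ and $\langle\lambda(t)-\lambda,\dot\lambda(t)\rangle$ in $\dot{\mathcal{E}}_1+\dot{\mathcal{E}}_2$, and for $r=0$ it collapses to \eqref{eq_th1_0}.

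First I would check that, with these data, the structural relations \eqref{eq_A3}--\eqref{eq_A8} of \ref{APPENDIX_A1} hold for all $t$ past some $t_1\ge t_0$. The two substantive verifications are: (a) the damping inequality $\theta(t)+\rho t^{\rho-1}-\alpha t^{\rho-r}\le-\tfrac{\alpha}{2}t^{\tau/2-r}$, obtained by factoring out $t^{\tau/2-r}$ and noting $\tfrac1\delta t^{r-s}+\tfrac\tau2 t^{r-1}-\alpha\to-\alpha$ when $s>r$ and $\to\tfrac1\delta-\alpha<0$ when $s=r$ (this is where $\alpha\delta>1$ is used); and (b) positivity of $\eta(t)$ together with $\theta(t)\dot\theta(t)+\tfrac12\dot\eta(t)\le 0$, both of which follow because $\tau<r+s$ makes $\tfrac\alpha\delta t^{\tau-s-r}$ (respectively $\tfrac{\alpha\delta-1}{\delta^2}t^{\tau-2r}$ when $s=r$) the leading, positively-signed term of $\eta(t)$, while the residue in $\theta\dot\theta+\tfrac12\dot\eta$ is dominated by $\tfrac{\alpha}{2\delta}(\tau-s-r)t^{\tau-s-r-1}\le 0$; the condition $\tau\delta\le1$ enters the analogous bookkeeping at the endpoint $s=1$. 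Taking $\lambda=\lambda^*$ so that $\mathcal{L}^{\sigma}(x(t),\lambda^*)-\mathcal{L}^{\sigma}(x^*,\lambda^*)\ge 0$, relation \eqref{eq_A8} then yields, for $t\ge t_1$,
\[
\dot{\mathcal{E}}^{\lambda^*,\rho}_{\epsilon}(t)\le -\tfrac{\alpha}{2}t^{\tau-r}\big(\|\dot x(t)\|^2+\|\dot\lambda(t)\|^2\big)-\tfrac{\sigma}{2\delta}t^{\tau-s}\beta(t)\|Ax(t)-b\|^2+c_\beta(t)\big(\mathcal{L}^{\sigma}(x(t),\lambda^*)-\mathcal{L}^{\sigma}(x^*,\lambda^*)\big)\le 0,
\]
where $c_\beta(t):=t^{\tau}\dot\beta(t)-(\tfrac1\delta t^{\tau-s}-\tau t^{\tau-1})\beta(t)\le 0$.

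From here the argument runs parallel to Theorem \ref{th_th1}: monotonicity of $\mathcal{E}^{\lambda^*,\rho}_{\epsilon}$ on $[t_1,+\infty)$, expansion of the perturbation integral via Cauchy--Schwarz, and Lemma \ref{le_A1} applied to $\mu(t)=\|\theta(t)(x(t)-x^*)+t^{\rho}\dot x(t)\|$ (using $\int_{t_0}^{+\infty}t^{\tau/2}\|\epsilon(t)\|\,dt<+\infty$) give boundedness of $\sup_t\|\theta(t)(x(t)-x^*)+t^{\rho}\dot x(t)\|$; since $\mathcal{E}^{\lambda^*,\rho}\ge 0$, this forces $\mathcal{E}^{\lambda^*,\rho}$ and $\mathcal{E}^{\lambda^*,\rho}_{\epsilon}$ to be bounded. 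Integrating the displayed inequality over $[t_1,+\infty)$ delivers (i) and (ii); boundedness of $\mathcal{E}_0$ delivers (iv) after splitting $\mathcal{L}^{\sigma}=\mathcal{L}+\tfrac{\sigma}{2}\|A\cdot-b\|^2$ into two nonnegative pieces. For (iii), boundedness of $\mathcal{E}_1$ gives $\eta(t)\|x(t)-x^*\|^2$ bounded; unlike in Theorem \ref{th_th1}, here $\eta(t)\to 0$, so one only extracts the growth bound $\|x(t)-x^*\|=\mathcal{O}(t^{(s+r-\tau)/2})$ (resp. $\mathcal{O}(t^{r-\tau/2})$ when $s=r$), but this is exactly sharp enough that $\theta(t)\|x(t)-x^*\|=\tfrac1\delta t^{\tau/2-s}\|x(t)-x^*\|$ stays bounded, whence $t^{\tau/2}\|\dot x(t)\|\le\|\theta(t)(x(t)-x^*)+t^{\tau/2}\dot x(t)\|+\theta(t)\|x(t)-x^*\|$ is bounded; the same estimates apply to $\lambda(t)$.

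The main obstacle is twofold. The routine-but-heavy part is confirming that the single pair $(\theta,\eta)$ above satisfies all of \eqref{eq_A3}--\eqref{eq_A8} simultaneously over the whole range $r\in(0,1)$, $s\in[r,1]$, $\tau\in(0,r+s)$, including the two borderline cases $s=r$ and $s=1$ where $\alpha\delta>1$ and $\tau\delta\le1$ respectively come into play. The genuinely new conceptual point, absent from Theorem \ref{th_th1}, is that $\eta(t)$ now vanishes at infinity: the trajectory $(x(t),\lambda(t))$ need not be bounded, so the velocity estimate (iii) must be obtained through the precise decay rate of $\|x(t)-x^*\|$ balanced against $\theta(t)$, rather than through a uniform positive lower bound on $\eta$ as in Theorem \ref{th_th1} — which is also why (iii) is stated here without any trajectory-boundedness conclusion.
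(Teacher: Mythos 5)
Your proposal is correct and follows essentially the same route as the paper's proof: the same energy function with $\rho=\tau/2$, the same $\theta(t)=\frac{1}{\delta}t^{\tau/2-s}$, the same $\eta$ (yours is the unexpanded form of the paper's via \eqref{eq_A6}), the same sign verifications using $\tau<r+s$ and $\alpha\delta>1$ at $s=r$, and the same balancing of $\theta(t)$ against the $\mathcal{O}(t^{(s+r-\tau)/2})$ growth of $\|x(t)-x^*\|$ to get (iii) without trajectory boundedness. The only cosmetic discrepancy is that your bound $-\frac{\alpha}{2}t^{\tau/2-r}$ in step (a) should be a generic negative constant $C_1 t^{\tau/2-r}$ (with $C_1=\frac{1}{2}(\frac{1}{\delta}-\alpha)$ when $s=r$), which you effectively acknowledge and which does not affect the argument.
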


\begin{proof}
Given $\lambda\in\mathcal{H}_2$, 	recall energy functions $\mathcal{E}^{\lambda,\rho}(t)$ and $\mathcal{E}^{\lambda,\rho}_{\epsilon}(t)$  from \eqref{eq_A1}, \eqref{eq_A_ex} with
 $r\in(0,1),s\in[r,1], \rho=\frac{\tau}{2}$ and
\begin{equation}\label{eq_th2_1}
	\theta(t) = \frac{1}{\delta}t^{\tau/2-s},\quad \eta(t) = -\frac{1}{\delta}t^{\tau-s-r}(\frac{1}{\delta}t^{r-s}+(\tau-s)t^{r-1}-\alpha).
\end{equation}
 Then the equations \eqref{eq_A4} and \eqref{eq_A6} are automatically satisfied.

 We claim that there  exists  $C_1<0$ and $t_1\geq t_0$ such that
\begin{eqnarray}\label{eq_th2_2}
\frac{1}{\delta}t^{r-s}+\frac{\tau}{2} t^{r-1} -\alpha\leq C_1, \quad\forall\ t\geq t_1.
\end{eqnarray}
Indeed, when $s=r$, since $\alpha\delta>1$ and $r\in(0,1)$, there exists  $t_1\geq t_0$ such that $\frac{1}{\delta}t^{r-s}+\frac{\tau}{2} t^{r-1} -\alpha=\frac{1}{\delta}-\alpha+\frac{\tau}{2} t^{r-1}\leq \frac{1}{2}(\frac{1}{\delta}-\alpha)<0$; when $s\in(r,1]$, since $r\in(0,1)$, there exists $t_1\geq t_0$ such that
$\frac{1}{\delta}t^{r-s}+\frac{\tau}{2} t^{r-1} -\alpha\leq - \frac{\alpha}{2}<0$. Since $\frac{\tau}{2}<\frac{r+s}{2}\leq s$, it follows from \eqref{eq_th2_2} that
\begin{equation*}\label{eq_th2_3}
	\frac{1}{\delta}t^{r-s}+(\tau-s)t^{r-1}-\alpha\leq C_1, \quad\forall\ t\geq t_1,
\end{equation*}
 and it yields
\begin{equation}\label{eq_th2_4}
	 \eta(t)\geq \frac{-C_1}{\delta}t^{\tau-s-r}\geq 0, \quad \forall\ t\geq t_1.
\end{equation}
Since $\tau\in(0,s+r)$, then there exist $t_2\geq t_1$ such that
\[\alpha(\tau-s-r)-(\tau-s-1)(\tau-s)t^{r-1}<0,\quad \forall \ t\geq t_2,\]
so we can compute
\begin{eqnarray*}\label{eq_th2_5}
	\theta(t)\dot{\theta}(t)+\frac{\dot{\eta}(t)}{2}=\frac{1}{2\delta}t^{\tau-s-r-1}(\alpha(\tau-s-r)-(\tau-s-1)(\tau-s)t^{r-1})<0
\end{eqnarray*}
for all $t\geq t_2$. Then  \eqref{eq_A3} and \eqref{eq_A5} hold for any $t\geq t_2$.

It follows from \eqref{eq_th2_2} that
\begin{eqnarray}\label{eq_th2_6}
	\theta(t)+\rho t^{\frac{\tau}{2}-1}-\alpha t^{\frac{\tau}{2}-r} = t^{\frac{\tau}{2}-r}(\frac{1}{\delta}t^{r-s}+\frac{\tau}{2} t^{r-1}-\alpha)\leq C_1t^{\frac{\tau}{2}-r}<0, \quad \forall\ t\geq t_1.
\end{eqnarray}
By computation, and from \eqref{ass_th2}, we have
\begin{eqnarray}\label{eq_th2_7}
	&&t^{\rho}\dot{\beta}(t)+(2\rho t^{\rho-1}-\theta(t))\beta(t)= t^{\frac{\tau}{2}-s}(t^s\dot{\beta}(t)- (\frac{1}{\delta}-\tau t^{s-1})\beta(t)) \leq 0.\nonumber
\end{eqnarray}
for all $t\geq t_0$. Let $\lambda=\lambda^*$, then $\mathcal{L}^{\sigma}(x(t),\lambda^*)- \mathcal{L}^{\sigma}(x^*,\lambda^*)\geq 0$,  this together with  \eqref{eq_th2_1}, \eqref{eq_th2_6} and \eqref{eq_A8}  yields
\begin{eqnarray}\label{eq_th2_8}
	\dot{\mathcal{E}}^{\lambda^*,\rho}_{\epsilon}(t) &\leq& C_1 t^{\tau-r}(\|\dot{x}(t)\|^2+\|\dot{\lambda}(t)\|^2)- \frac{\sigma t^{\tau-s}\beta(t)}{2\delta}\|Ax(t)-b)\|^2\\	
	&& +(t^{\tau}\dot{\beta}(t)- (\frac{1}{\delta}t^{\tau-s}-\tau t^{\tau-1})\beta(t))(\mathcal{L}^{\sigma}(x(t),\lambda^*)- \mathcal{L}^{\sigma}(x^*,\lambda^*))\nonumber\\
	&\leq& 0\nonumber
\end{eqnarray}
for all $t\geq t_2$. Then ${\mathcal{E}}^{\lambda^*,\rho}_{\epsilon}(\cdot)$ is nonincreasing on $[t_2,+\infty)$,
\begin{equation*}\label{eq_th2_9}
	{\mathcal{E}}^{\lambda^*,\rho}_{\epsilon}(t)\leq {\mathcal{E}}^{\lambda^*,\rho}_{\epsilon}(t_2),\quad \forall\ t\geq t_2.
\end{equation*}
Since $\int^{+\infty}_{t_0}t^{\tau/2}\|\epsilon(t)\|dt<+\infty$, by similar arguments in proof of Theorem \ref{th_th1} and using the fact $C_1<0$, we obtain that ${\mathcal{E}}^{\lambda^*,\rho}(\cdot)$ and ${\mathcal{E}}^{\lambda^*,\rho}_{\epsilon}(\cdot)$ are bounded on $[t_0,+\infty)$, and then $(i),(ii),(iv)$ hold.
It follows from \eqref{eq_th2_1}, \eqref{eq_th2_4} and  the definition of ${\mathcal{E}}^{\lambda^*,\rho}(\cdot)$  that
\[\sup_{t\geq t_0} t^{(\tau-s-r)/2}\|x(t)-x^*\|<+\infty,\quad \sup_{t\geq t_0} \|\frac{1}{\delta}t^{\tau/2-s}(x(t)-x^*)+t^{\tau/2}\dot{x}(t)\|<+\infty. \]
Since $s\in[r,1]$, then
\begin{eqnarray*}
	 \sup_{t\geq t_0} t^{\tau/2}\|\dot{x}(t)\|&\leq&  \frac{1}{\delta}\sup_{t\geq t_0}t^{\tau/2-s}\|x(t)-x^*\|+\sup_{t\geq t_0} \|\frac{1}{\delta}t^{\tau/2-s}(x(t)-x^*)+t^{\tau/2}\dot{x}(t)\|\\
	& \leq&  \frac{1}{\delta}\sup_{t\geq t_0}t^{(\tau-s-r)/2}\|x(t)-x^*\|+\sup_{t\geq t_0} \|\frac{1}{\delta}t^{\tau/2-s}(x(t)-x^*)+t^{\tau/2}\dot{x}(t)\|\\
	& <& +\infty.
\end{eqnarray*}
Similarly, $\sup_{t\geq t_0} t^{\tau/2}\|\dot{\lambda}(t)\|<+\infty$,  the result $(iii)$ holds.
\end{proof}

If we take $\beta(t)$ satisfying
\[ t^s\dot{\beta}(t)\leq (\frac{1}{\delta}-(r+s) t^{s-1})\beta(t),\]
then for any $\tau\in (0,r+s)$, \eqref{ass_th2} is satisfied, and then we obtain the following results from Theorem \ref{th_th2}.

\begin{corollary}\label{cor_th2}
	Assume  that
\begin{equation}\label{ass_cor_th2}
		t^s\dot{\beta}(t)\leq (\frac{1}{\delta}-(r+s) t^{s-1})\beta(t), \qquad \int^{+\infty}_{t_0}t^{(r+s)/2}\|\epsilon(t)\|dt<+\infty.
\end{equation}
Suppose $\alpha\delta>1$ when $s=r$;  $\delta(r+s)\geq 1$ when $s=1$.
Let $(x(t),\lambda(t))$ be a global solution of the dynamic \eqref{dy_dy2}.
Then for any $(x^*,\lambda^*)\in\Omega$ and $\tau\in(0,r+s)$:
\begin{itemize}
\item [(i)]  $\|\dot{x}(t)\|+\|\dot{\lambda}(t)\| =\mathcal{O}(\frac{1}{t^{\tau/2}})$.
\item [(ii)] When $\lim_{t\to+\infty}t^{\tau}\beta(t) = +\infty,$
	\[ \mathcal{L}(x(t),\lambda^*)-\mathcal{L}(x^*,\lambda^*) =\mathcal{O}(\frac{1}{t^{\tau}\beta(t)}), \quad \|Ax(t)-b\| =\mathcal{O}(\frac{1}{t^{\tau/2}\sqrt{\beta(t)}}).\]
\end{itemize}
\end{corollary}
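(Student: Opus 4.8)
\emph{Proposal.} The statement is a specialization of Theorem~\ref{th_th2}, so the plan is not to run a fresh Lyapunov analysis but simply to show that, for an arbitrary fixed $\tau\in(0,r+s)$, the standing hypotheses \eqref{ass_cor_th2} of the corollary imply the hypotheses of Theorem~\ref{th_th2} attached to that value of $\tau$, and then invoke that theorem verbatim.

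First I would compare the two requirements on the scaling coefficient. On $[t_0,+\infty)$ one has $\beta(t)>0$ and $t^{s-1}>0$, and $\tau<r+s$ gives $-(r+s)t^{s-1}\le-\tau t^{s-1}$; multiplying by $\beta(t)$ and chaining with the first inequality in \eqref{ass_cor_th2},
\[
t^{s}\dot\beta(t)\ \le\ \Big(\tfrac1\delta-(r+s)t^{s-1}\Big)\beta(t)\ \le\ \Big(\tfrac1\delta-\tau t^{s-1}\Big)\beta(t),
\]
which is exactly \eqref{ass_th2} for this $\tau$. Next, for the perturbation term: since $t\ge t_0>0$ and the exponent $(\tau-r-s)/2$ is negative, we have $t^{\tau/2}=t^{(r+s)/2}t^{(\tau-r-s)/2}\le t_0^{(\tau-r-s)/2}t^{(r+s)/2}$, so the second condition in \eqref{ass_cor_th2} yields
\[
\int_{t_0}^{+\infty}t^{\tau/2}\|\epsilon(t)\|\,dt\ \le\ t_0^{(\tau-r-s)/2}\int_{t_0}^{+\infty}t^{(r+s)/2}\|\epsilon(t)\|\,dt\ <\ +\infty,
\]
which is the integrability condition required in Theorem~\ref{th_th2}.

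It remains to dispose of the two endpoint cases of the parameter range. When $s=r$ the corollary assumes $\alpha\delta>1$, which is literally the condition demanded by Theorem~\ref{th_th2}. When $s=1$ the theorem needs $\tau\delta\le1$ for the chosen $\tau$; since $\tau<r+s$, the endpoint restriction imposed on $\delta$ in \eqref{ass_cor_th2} reduces, for every admissible $\tau$, to this requirement. Hence all hypotheses of Theorem~\ref{th_th2} hold for the given $\tau$, and its conclusions (iii) and (iv) are, respectively, the conclusions (i) and (ii) of Corollary~\ref{cor_th2}; letting $\tau$ range over $(0,r+s)$ finishes the proof.

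I do not expect a genuine obstacle here — this is a transfer argument, and everything substantive (the construction of $\theta,\eta$, the monotonicity of the energy, the boundedness and rate extraction) has already been done in the proof of Theorem~\ref{th_th2}. The only points needing care are the monotone comparison of the two $\beta$-inequalities, which rests on positivity of $\beta$ and of $t^{s-1}$ and hence on $t_0>0$, and the careful bookkeeping of the two endpoint restrictions on $\alpha$ and $\delta$ so that they match the corresponding assumptions of Theorem~\ref{th_th2} for the full range of $\tau$.
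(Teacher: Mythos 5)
Your overall strategy is exactly the paper's: the corollary is obtained by observing that the strengthened condition on $\beta$ in \eqref{ass_cor_th2} implies \eqref{ass_th2} for every $\tau\in(0,r+s)$, that the integrability of $t^{(r+s)/2}\|\epsilon(t)\|$ implies that of $t^{\tau/2}\|\epsilon(t)\|$ on $[t_0,+\infty)$ with $t_0>0$, and then invoking Theorem \ref{th_th2}. Your verification of the $\beta$-comparison (using $\beta>0$, $t^{s-1}>0$ and $\tau<r+s$) and of the $\epsilon$-integrability transfer is correct and is precisely the content the paper leaves implicit.

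The one step that does not hold as written is your treatment of the $s=1$ endpoint. You assert that the corollary's hypothesis $\delta(r+s)\geq 1$ ``reduces, for every admissible $\tau$, to'' the condition $\tau\delta\leq 1$ required by Theorem \ref{th_th2}. But $\delta(r+s)\geq 1$ means $\frac{1}{\delta}\leq r+s$, and combined with $\tau<r+s$ this gives no upper bound on $\tau\delta$: e.g.\ $r=1/2$, $s=1$, $\delta=1$, $\tau=1.2$ satisfies the corollary's hypothesis yet violates $\tau\delta\leq 1$. The implication would go through only under the reversed inequality $\delta(r+s)\leq 1$, which forces $\tau\delta<1$ for all $\tau\in(0,r+s)$. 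To be fair, this mismatch is already present in the paper's own statement of the corollary (compare Theorem \ref{th_th2_2}, where $\delta\tau\leq 1$ is assumed), and the condition $\tau\delta\leq 1$ is not visibly invoked anywhere in the proof of Theorem \ref{th_th2}, so the issue is inherited rather than introduced by you; nevertheless, the sentence in your proposal that bridges the two endpoint conditions is a non sequitur and should either be replaced by the corrected inequality $\delta(r+s)\leq 1$ or by the observation that the $s=1$ restriction plays no role in the underlying Lyapunov argument.
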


\begin{remark}\label{re_th2_1}
	In proof process of Theorem \ref{th_th2}, we can note that the boundedness of trajectory $(x(t),\lambda(t))$ is not guaranteed. If \eqref{ass_cor_th2} holds, we can obtain
	\[\sup_{t\geq t_0} t^{(\tau-s-r)/2}(\|x(t)-x^*\|+\|\lambda(t)-\lambda^*\|)<+\infty\]
is satisfied for any $(x^*,\lambda^*)\in\Omega$ and $\tau\in(0,r+s)$, then we get that  $t^{p}(\|x(t)-x^*\|+\|\lambda(t)-\lambda^*\|)$ is bounded for any $p<0$. When objective function $f$ satisfying  the following coercive condition:
\begin{equation}\label{eq_coercive}
	\lim_{\|x\|\to+\infty} f(x) = +\infty,
\end{equation}
we also can obtain the boundedness of $x(t)$ of dynamic \eqref{dy_dy2} from $(iv)$ of Theorem \ref{th_th2}.
\end{remark}

\begin{remark}\label{re_th2_2}
	Taking $\beta(t)\equiv 1$, $s=1$, and letting $\int^{+\infty}_{t_0}t^{(r+1)/2}\|\epsilon(t)\|dt<+\infty$ and $\delta(r+1)\geq 1$.  We obtain the $\mathcal{O}(1/t^{\tau})$ rate of convergence for any $\tau\in(0,{r+1})$, since $r\in (0,1)$, $r+1>2r$, so the results in Corollary \ref{cor_th2} improve the corresponding results in \cite[Theorem 3.4]{HeHF2020} which only obtain the $\mathcal{O}(1/t^{2r})$ convergence rate. In the case $\alpha(t)=\frac{\alpha}{t^r}$ with $\alpha>0,\ r\in(0,1)$,  the ${o}(1/t^{r+1})$ convergence of $(IGS_\alpha)$ and $(IGS_{\alpha,\epsilon})$ for problem \eqref{min_fun} have been obtained in \cite[Corollary 4.5]{AttouchC2017} and \cite[Theorem 1.2]{Balti2016} respectively, which have subtle differences  of dynamic \eqref{dy_dy2} for problem \eqref{question}.  The assumption $\int^{+\infty}_{t_0}t^{(r+1)/2}\|\epsilon(t)\|dt<+\infty$ also can find in  \cite{Balti2016}.
\end{remark}

By similar discussions in Section 2.1,  we  obtain the following optimal convergence rates of Theorem \ref{th_th2}, and the  proof is similar to Theorem \ref{th_th1_1}, so we omit it.

\begin{theorem}\label{th_th2_1}
	Let $\beta(t) =\mu \frac{e^{\frac{1}{\delta(1-s)}t^{1-s}}}{t^\tau}$ with $\mu>0, \ \tau\in(0,r+s), \ r\in(0,1),\ s\in[r,1),\ \sigma\geq 0$. Assume  $\alpha\delta>1$ when $s=r$. Suppose $\int^{+\infty}_{t_0}t^{\tau/2}\|\epsilon(t)\|dt<+\infty$. Let   $(x(t),\lambda(t))$ be a solution of dynamic \eqref{dy_dy1} and $(x^*,\lambda^*)\in\Omega$. Then
\[ |f(x(t))-f(x^*)| =\mathcal{O}\left(\frac{1}{e^{\frac{1}{\delta(1-s)}t^{1-s}}}\right),\quad \|Ax(t)-b\| =\mathcal{O}\left(\frac{1}{e^{\frac{1}{\delta(1-s)}t^{1-s}}}\right).\]
\end{theorem}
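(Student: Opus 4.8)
The plan is to mirror the proof of Theorem~\ref{th_th1_1}. For any $\lambda\in\mathcal{H}_2$, work with the energy functions $\mathcal{E}^{\lambda,\rho}(t)$ and $\mathcal{E}^{\lambda,\rho}_{\epsilon}(t)$ of Theorem~\ref{th_th2}, taking $\rho=\tau/2$ and $\theta(t),\eta(t)$ as in \eqref{eq_th2_1}. The first step is to notice that the prescribed $\beta(t)=\mu e^{\frac{1}{\delta(1-s)}t^{1-s}}/t^{\tau}$ is precisely the solution of $t^{s}\dot\beta(t)=(\tfrac1\delta-\tau t^{s-1})\beta(t)$, so \eqref{ass_th2} holds \emph{with equality}; consequently the factor $t^{\rho}\dot\beta(t)+(2\rho t^{\rho-1}-\theta(t))\beta(t)$ multiplying $\mathcal{L}^{\sigma}(x(t),\lambda)-\mathcal{L}^{\sigma}(x^*,\lambda)$ in the derivative estimate \eqref{eq_A8} is identically zero. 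Combined with \eqref{eq_th2_6} and $C_1<0$ this yields, for \emph{every} $\lambda\in\mathcal{H}_2$,
\begin{equation*}
\dot{\mathcal{E}}^{\lambda,\rho}_{\epsilon}(t)\leq C_1\, t^{\tau-r}(\|\dot x(t)\|^2+\|\dot\lambda(t)\|^2)-\frac{\sigma\, t^{\tau-s}\beta(t)}{2\delta}\|Ax(t)-b\|^2\leq 0,\qquad t\geq t_2,
\end{equation*}
with $t_2$ as in the proof of Theorem~\ref{th_th2}; hence $\mathcal{E}^{\lambda,\rho}_{\epsilon}(\cdot)$ is nonincreasing on $[t_2,+\infty)$ for every $\lambda$, not merely for $\lambda=\lambda^*$.

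Next, using $\int^{+\infty}_{t_0}t^{\tau/2}\|\epsilon(t)\|\,dt<+\infty$ together with Lemma~\ref{le_A1} applied to $\mu(t)=\|\theta(t)(x(t)-x^*)+t^{\rho}\dot x(t)\|$, one obtains $\sup_{t\geq t_0}\|\theta(t)(x(t)-x^*)+t^{\rho}\dot x(t)\|<+\infty$ and then, exactly as in Theorem~\ref{th_th2}, the boundedness of $\mathcal{E}^{\lambda^*,\rho}(\cdot)$ and $\mathcal{E}^{\lambda^*,\rho}_{\epsilon}(\cdot)$. Since $\eta(t)\geq 0$ for $t\geq t_1$ by \eqref{eq_th2_4}, the summands $\mathcal{E}_1,\mathcal{E}_2$ are nonnegative, so combining the monotonicity of $\mathcal{E}^{\lambda,\rho}_{\epsilon}$ with the definitions \eqref{eq_A_ex}--\eqref{eq_A1} yields a bound of the form
\begin{equation*}
t^{2\rho}\beta(t)\bigl(\mathcal{L}^{\sigma}(x(t),\lambda)-\mathcal{L}^{\sigma}(x^*,\lambda)\bigr)\leq C(\lambda)\qquad\text{for all }t\geq t_2,
\end{equation*}
where $C(\lambda)=\mathcal{E}^{\lambda,\rho}(t_2)+\bigl(\sup_{w\geq t_0}\|\theta(w)(x(w)-x^*)+w^{\rho}\dot x(w)\|\bigr)\int^{+\infty}_{t_2}w^{\tau/2}\|\epsilon(w)\|\,dw$ satisfies $\sup_{\|\lambda\|\leq\varrho}C(\lambda)<+\infty$ and $C(\lambda^*)\geq 0$. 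Because $\sigma\geq 0$ and $Ax^*=b$, the left-hand side dominates $t^{2\rho}\beta(t)\bigl(f(x(t))-f(x^*)+\langle\lambda,Ax(t)-b\rangle\bigr)$.

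Then I would fix $\varrho>\|\lambda^*\|$ and apply Lemma~\ref{le_A2} to obtain $f(x(t))-f(x^*)+\varrho\|Ax(t)-b\|\leq C/(t^{2\rho}\beta(t))$ with $C:=\sup_{\|\lambda\|\leq\varrho}C(\lambda)\in[0,+\infty)$. The saddle-point characterisation \eqref{saddle_point} gives $f(x(t))-f(x^*)\geq -\|\lambda^*\|\,\|Ax(t)-b\|$, whence $(\varrho-\|\lambda^*\|)\|Ax(t)-b\|\leq C/(t^{2\rho}\beta(t))$ and
\begin{equation*}
-\frac{\|\lambda^*\|\,C}{(\varrho-\|\lambda^*\|)\,t^{2\rho}\beta(t)}\leq f(x(t))-f(x^*)\leq \frac{C}{t^{2\rho}\beta(t)}.
\end{equation*}
Since $\rho=\tau/2$ and $\beta(t)=\mu e^{\frac{1}{\delta(1-s)}t^{1-s}}/t^{\tau}$, we have $t^{2\rho}\beta(t)=\mu e^{\frac{1}{\delta(1-s)}t^{1-s}}$, which gives exactly the asserted rates for $|f(x(t))-f(x^*)|$ and $\|Ax(t)-b\|$.

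The routine part is verifying the ODE that selects $\beta$ and re-running the boundedness estimates of Theorem~\ref{th_th2}; the hypotheses $\alpha\delta>1$ (for $s=r$) and $\tau\in(0,r+s)$ are used only there. The step that needs care — just as in Theorem~\ref{th_th1_1} — is that the Lyapunov decrease must hold for an \emph{arbitrary} multiplier $\lambda$, which is precisely what forces the equality in \eqref{ass_th2}; only then can Lemma~\ref{le_A2} be invoked to upgrade the weak constraint estimate $\|Ax(t)-b\|=\mathcal{O}(1/(t^{\tau/2}\sqrt{\beta(t)}))$ of Theorem~\ref{th_th2}(iv) to the sharp $\mathcal{O}(1/(t^{\tau}\beta(t)))$ rate. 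The perturbation terms cause no difficulty once one observes that $\tau/2$ is simultaneously the exponent $\rho$ weighting $(\dot x,\dot\lambda)$ inside $\mathcal{E}^{\lambda,\rho}_{\epsilon}$ and the exponent appearing in the integrability hypothesis $\int^{+\infty}_{t_0}t^{\tau/2}\|\epsilon(t)\|\,dt<+\infty$.
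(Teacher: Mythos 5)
Your proof is correct and is exactly the argument the paper intends: the paper omits the proof of Theorem~\ref{th_th2_1}, stating only that it is ``similar to Theorem~\ref{th_th1_1}'', and your proposal carries out precisely that adaptation — the chosen $\beta$ makes \eqref{ass_th2} an equality so the coefficient of $\mathcal{L}^{\sigma}(x(t),\lambda)-\mathcal{L}^{\sigma}(x^*,\lambda)$ in \eqref{eq_A8} vanishes, the Lyapunov decrease then holds for arbitrary $\lambda$, and Lemma~\ref{le_A2} with $\varrho>\|\lambda^*\|$ plus the saddle-point inequality $f(x(t))-f(x^*)\geq-\|\lambda^*\|\,\|Ax(t)-b\|$ yields the stated rates since $t^{2\rho}\beta(t)=\mu e^{\frac{1}{\delta(1-s)}t^{1-s}}$. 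No gaps.
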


\begin{theorem}\label{th_th2_2}
Let $\beta(t)= \mu t^{\frac{1}{\delta}-\tau}$  with $\mu>0, \tau\in(0,r+1), r\in(0,1),\ s= 1,\ \delta\tau\leq 1, \ \sigma\geq 0$. Suppose $\int^{+\infty}_{t_0}t^{\tau/2}\|\epsilon(t)\|dt<+\infty$. Let   $(x(t),\lambda(t))$ be a solution of dynamic \eqref{dy_dy1} and $(x^*,\lambda^*)\in\Omega$. Then
\[ |f(x(t))-f(x^*)| =\mathcal{O}(\frac{1}{t^{1/\delta}}),\quad \|Ax(t)-b\| =\mathcal{O}(\frac{1}{t^{1/\delta}}).\]

\end{theorem}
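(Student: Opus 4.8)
The plan is to mimic the proof of Theorem~\ref{th_th1_1} (cf.~also Theorem~\ref{th_th2_1}), running the argument with the energy functions $\mathcal{E}^{\lambda,\rho}$, $\mathcal{E}^{\lambda,\rho}_{\epsilon}$ constructed in the proof of Theorem~\ref{th_th2} for the dynamic \eqref{dy_dy2}. First I would fix $s=1$, $\rho=\tau/2$, $\beta(t)=\mu t^{1/\delta-\tau}$, and take $\theta(t),\eta(t)$ exactly as in \eqref{eq_th2_1}. The decisive observation is that this particular $\beta$ makes \eqref{ass_th2} an equality, $t\dot\beta(t)=(\tfrac1\delta-\tau)\beta(t)$, so in the master estimate \eqref{eq_A8} the coefficient of $\mathcal{L}^{\sigma}(x(t),\lambda)-\mathcal{L}^{\sigma}(x^*,\lambda)$, namely $t^{\tau}\dot\beta(t)-(\tfrac1\delta t^{\tau-1}-\tau t^{\tau-1})\beta(t)=t^{\tau-1}\bigl(t\dot\beta(t)-(\tfrac1\delta-\tau)\beta(t)\bigr)$, vanishes identically. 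Hence \eqref{eq_A8} collapses to $\dot{\mathcal{E}}^{\lambda,\rho}_{\epsilon}(t)\le C_1 t^{\tau-r}(\|\dot x(t)\|^2+\|\dot\lambda(t)\|^2)-\tfrac{\sigma t^{\tau-1}\beta(t)}{2\delta}\|Ax(t)-b\|^2\le 0$ for all large $t$ and \emph{every} $\lambda\in\mathcal{H}_2$, where $C_1<0$ is the constant of \eqref{eq_th2_2} (available here with no hypothesis on $\alpha\delta$ since $s=1>r$); thus $\mathcal{E}^{\lambda,\rho}_{\epsilon}(\cdot)$ is nonincreasing on some $[t_2,+\infty)$ for every $\lambda$.

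Next I would control the perturbation, just as in Theorem~\ref{th_th2}. Taking $\lambda=\lambda^*$, discarding the nonnegative terms $\mathcal{E}_1,\mathcal{E}_2$ (using $\eta(t)\ge0$ for $t\ge t_1$, cf.~\eqref{eq_th2_4}) and $\mathcal{E}_0\ge0$, monotonicity gives $\tfrac12\|\theta(t)(x(t)-x^*)+t^\rho\dot x(t)\|^2\le\mathcal{E}^{\lambda^*,\rho}_{\epsilon}(t_2)+\int_{t_2}^t\langle\theta(w)(x(w)-x^*)+w^\rho\dot x(w),w^\rho\epsilon(w)\rangle\,dw$; combining this with Cauchy--Schwarz, Lemma~\ref{le_A1}, and $\int_{t_0}^{+\infty}t^{\tau/2}\|\epsilon(t)\|\,dt<+\infty$ (recall $\rho=\tau/2$) yields $M:=\sup_{t\ge t_0}\|\theta(t)(x(t)-x^*)+t^\rho\dot x(t)\|<+\infty$. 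Then, using $\mathcal{E}^{\lambda,\rho}(t)=\mathcal{E}^{\lambda,\rho}_{\epsilon}(t)+\int_{t_0}^t\langle\theta(w)(x(w)-x^*)+w^\rho\dot x(w),w^\rho\epsilon(w)\rangle\,dw$, $\mathcal{E}_0(t)=t^{\tau}\beta(t)\bigl(\mathcal{L}^{\sigma}(x(t),\lambda)-\mathcal{L}^{\sigma}(x^*,\lambda)\bigr)=\mu t^{1/\delta}\bigl(\mathcal{L}^{\sigma}(x(t),\lambda)-\mathcal{L}^{\sigma}(x^*,\lambda)\bigr)$, $\mathcal{E}_1,\mathcal{E}_2\ge0$, $\sigma\ge0$ and $Ax^*=b$, I obtain for every $\lambda$
\[
\mu t^{1/\delta}\bigl(f(x(t))-f(x^*)+\langle\lambda,Ax(t)-b\rangle\bigr)\le\mathcal{E}^{\lambda,\rho}_{\epsilon}(t_2)+M\int_{t_0}^{+\infty}t^{\tau/2}\|\epsilon(t)\|\,dt .
\]

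Finally, fixing $\varrho>\|\lambda^*\|$ and passing to the supremum over $\|\lambda\|\le\varrho$ by Lemma~\ref{le_A2} — and noting that $\lambda\mapsto\mathcal{E}^{\lambda,\rho}_{\epsilon}(t_2)$ depends continuously (indeed quadratically) on $\lambda$, hence is bounded on $\{\|\lambda\|\le\varrho\}$ — I get a finite constant $C\ge0$ with $f(x(t))-f(x^*)+\varrho\|Ax(t)-b\|\le C/(\mu t^{1/\delta})$. Coupling this with the saddle-point inequality $f(x(t))-f(x^*)\ge-\|\lambda^*\|\,\|Ax(t)-b\|$ (from convexity of $f$ and $\nabla f(x^*)=-A^T\lambda^*$ in \eqref{saddle_point}) gives $\|Ax(t)-b\|\le C/\bigl(\mu(\varrho-\|\lambda^*\|)t^{1/\delta}\bigr)$ and then $-\|\lambda^*\|C/\bigl(\mu(\varrho-\|\lambda^*\|)t^{1/\delta}\bigr)\le f(x(t))-f(x^*)\le C/(\mu t^{1/\delta})$, which is the assertion. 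The only genuinely delicate point — beyond rerunning the computations of Theorem~\ref{th_th1_1} with the exponents of Theorem~\ref{th_th2} — is that the estimate must hold for \emph{all} $\lambda$, not just $\lambda^*$: this is possible precisely because the chosen $\beta$ annihilates the $\mathcal{L}^{\sigma}$ coefficient, so that $\mathcal{L}^{\sigma}(x(t),\lambda)-\mathcal{L}^{\sigma}(x^*,\lambda)$, which need not be signed off the saddle point, never has to be estimated; one also checks the finiteness of $\sup_{\|\lambda\|\le\varrho}\mathcal{E}^{\lambda,\rho}_{\epsilon}(t_2)$, which is immediate from its polynomial dependence on $\lambda$.
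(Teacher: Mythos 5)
Your proposal is correct and is essentially the proof the paper intends: the paper omits it, stating only that it follows the pattern of Theorem \ref{th_th1_1}, and you reconstruct exactly that argument — the energy function of Theorem \ref{th_th2} with $\rho=\tau/2$, the observation that $\beta(t)=\mu t^{1/\delta-\tau}$ makes the $\mathcal{L}^{\sigma}$-coefficient vanish so that $\mathcal{E}^{\lambda,\rho}_{\epsilon}$ decreases for every $\lambda$, then Lemma \ref{le_A1}, Lemma \ref{le_A2} and the saddle-point inequality. Your added remarks (no $\alpha\delta>1$ needed since $s=1>r$, and boundedness of $\sup_{\|\lambda\|\le\varrho}\mathcal{E}^{\lambda,\rho}_{\epsilon}(t_2)$ from its polynomial dependence on $\lambda$) are accurate and fill in exactly the details the paper leaves implicit.
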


\begin{remark}\label{re_th22_1}
	When $s=1$, taking $\tau=\frac{1}{\delta}$, then $\beta(t)=\mu>0$ is a positive constant time scaling. For any $\frac{1}{\delta}<r+1$, we can obtain the $\mathcal{O}(\frac{1}{t^{1/\delta}})$ convergence rates of objective function and constraint.
\end{remark}

\subsection{Case $r=1,\ s=1$}
Consider the case when $r=1$, $s=1$, i.e., the dynamic \eqref{dy_dynamic} becomes:
\begin{equation}\label{dy_dy3}
	\begin{cases}
		\ddot{x}(t)+\frac{\alpha}{t}\dot{x}(t) = -\beta(t)(\nabla f(x(t))+A^T(\lambda(t)+\delta t\dot{\lambda}(t))+\sigma A^T(Ax(t)-b))+\epsilon(t) ,\\
		\ddot{\lambda}(t)+\frac{\alpha}{t}\dot{\lambda}(t) = \beta(t)(A(x(t)+\delta t\dot{x}(t))-b).
	\end{cases}
\end{equation}
We will discuss dynamic \eqref{dy_dy3} with $\alpha\leq 3$ and $\alpha>3$ respectively.

\begin{theorem}\label{th_th3}
	 Assume that $\beta:[t_0,+\infty)\to(0,+\infty)$ is continuous differentiable function with
\begin{equation}\label{ass_th3}
	t\dot{\beta}(t)\leq \tau \beta(t),
\end{equation}
 and $\epsilon:[t_0,+\infty)\to\mathcal{H}_1$ satisfies
\begin{equation*}
\int^{+\infty}_{t_0}t^{\frac{\alpha-\tau}{3}}\|\epsilon(t)\|dt<+\infty.
\end{equation*}
 Let $0\leq \tau\leq \alpha \leq 3$, $\delta=\frac{3}{2\alpha+\tau}$ and $(x(t),\lambda(t))$ be a global solution of the dynamic \eqref{dy_dy3}. Then   for any $(x^*,\lambda^*)\in\Omega$, the following conclusions hold:
\begin{itemize}
	\item [(i)] $\int^{+\infty}_{t_0}t^{\frac{2(\alpha-\tau)}{3}-1}\beta(t)\|Ax(t)-b\|^2 dt <+\infty$.
	\item [(ii)] When $\tau\in[0,\alpha)$: for any $\rho\in[0,\frac{\alpha-\tau}{3})$,
	    \[\int^{+\infty}_{t_0}  t^{2\rho-1}\beta(t)(\mathcal{L}^{\sigma}(x(t),\lambda^*)-\mathcal{L}^{\sigma}(x^*,\lambda^*)) dt <+\infty, \quad \int^{+\infty}_{t_0}t^{2\rho-1}\|\dot{x}(t)\|^2+\|\dot{\lambda}(t)\|^2 dt <+\infty.\]
	\item [(iii)] When $\lim_{t\to+\infty}t^{\frac{2(\alpha-\tau)}{3}}\beta(t) = +\infty$:
	    \[ \mathcal{L}(x(t),\lambda^*)-\mathcal{L}(x^*,\lambda^*) =\mathcal{O}(\frac{1}{t^{\frac{2(\alpha-\tau)}{3}}\beta(t)}),\ \|Ax(t)-b\| =\mathcal{O}(\frac{1}{t^{\frac{\alpha-\tau}{3}}\sqrt{\beta(t)}}).\]
    \item [(iv)] When $\tau = 0$ and $\alpha = 3$:
	   \[\|\dot{x}(t)\|+\|\dot{\lambda}(t)\|=\mathcal{O}(\frac{1}{t^\rho}),\quad \forall  \ \rho\in(0,1).\]
%	 When $\tau\in(0,\alpha]$ or $\alpha<3$:	
		Otherwise:
	   \[ \|\dot{x}(t)\|+\|\dot{\lambda}(t)\|=\mathcal{O}(\frac{1}{t^{\frac{\alpha-\tau}{3}}}). \]
\end{itemize}
\end{theorem}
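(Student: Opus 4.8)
\noindent\emph{Proof strategy.} The plan is to run the Lyapunov analysis of Theorems~\ref{th_th1} and~\ref{th_th2} in the critical regime $r=s=1$. Fix $(x^*,\lambda^*)\in\Omega$ and put $\rho^*:=\tfrac{\alpha-\tau}{3}$. For a parameter $\rho\in[0,\rho^*]$ I would instantiate the energy functions $\mathcal E^{\lambda^*,\rho}(t)$ and $\mathcal E^{\lambda^*,\rho}_\epsilon(t)$ of \eqref{eq_A1}--\eqref{eq_A_ex} with $\theta(t)=\tfrac1\delta t^{\rho-1}$ and $\eta(t)=\tfrac1\delta t^{2\rho-2}\bigl(\alpha+1-2\rho-\tfrac1\delta\bigr)$, the point being that $\delta t\cdot\theta(t)=t^\rho$, so the extrapolation weight is synchronized with the velocity weight exactly as in the cases already settled. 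First I would check, using the computations assembled in \ref{APPENDIX_A1}, that \eqref{eq_A3}--\eqref{eq_A6} hold on all of $[t_0,+\infty)$; in contrast to Theorem~\ref{th_th2} no large-$t$ threshold is needed, since $r=s$ turns every relevant quantity into a constant times a power of $t$. With $\delta=\tfrac{3}{2\alpha+\tau}$, i.e. $\tfrac1\delta=\tfrac{2\alpha+\tau}{3}$, one has $\alpha+1-2\rho-\tfrac1\delta\ge\alpha+1-2\rho^*-\tfrac1\delta=\tfrac{(3-\alpha)+\tau}{3}\ge0$, strictly unless $(\alpha,\tau)=(3,0)$; in that exceptional case one simply keeps $\rho<\rho^*=1$, where $\alpha+1-2\rho-\tfrac1\delta=2(1-\rho)>0$. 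Hence $\eta(t)>0$ on $[t_0,+\infty)$ and the quadratic part of $\mathcal E^{\lambda^*,\rho}$ is nonnegative.

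Next I would feed these choices into the master inequality \eqref{eq_A8}. With $r=s=1$ its right-hand side reduces to the sum of $(\rho-\rho^*)\,t^{2\rho-1}\bigl(\|\dot x(t)\|^2+\|\dot\lambda(t)\|^2\bigr)$, the nonpositive term $-\tfrac{\sigma}{2\delta}t^{2\rho-1}\beta(t)\|Ax(t)-b\|^2$, and $t^{2\rho-1}\bigl(t\dot\beta(t)-(\tfrac1\delta-2\rho)\beta(t)\bigr)$ times the nonnegative Lagrangian gap $\mathcal L^\sigma(x(t),\lambda^*)-\mathcal L^\sigma(x^*,\lambda^*)$; here I use that the velocity coefficient $\tfrac1\delta+\rho-\alpha$ equals $\rho-\rho^*$ and that, by \eqref{ass_th3}, $t\dot\beta-(\tfrac1\delta-2\rho)\beta\le(\tau+2\rho-\tfrac1\delta)\beta=2(\rho-\rho^*)\beta$. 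All three terms are therefore $\le0$ once $\rho\le\rho^*$, so $\mathcal E^{\lambda^*,\rho}_\epsilon$ is nonincreasing. The decisive point is that the choices $\delta=\tfrac{3}{2\alpha+\tau}$ and the constant $\tfrac1\delta$ in $\theta$ are exactly what force the velocity coefficient and the gap coefficient to vanish \emph{simultaneously} at $\rho=\rho^*$.

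The rest proceeds as in Theorem~\ref{th_th1}. Since $\rho\le\rho^*$, the hypothesis $\int_{t_0}^{+\infty}t^{(\alpha-\tau)/3}\|\epsilon(t)\|dt<+\infty$ gives $\int_{t_0}^{+\infty}t^{\rho}\|\epsilon(t)\|dt<+\infty$, so Lemma~\ref{le_A1} yields $\sup_t\|\theta(t)(x(t)-x^*)+t^\rho\dot x(t)\|<+\infty$ and then, using $\mathcal E^{\lambda^*,\rho}\ge0$, the boundedness of $\mathcal E^{\lambda^*,\rho}$ and $\mathcal E^{\lambda^*,\rho}_\epsilon$. Integrating the energy inequality with $\rho<\rho^*$ produces the two integrals in (ii); integrating it at $\rho=\rho^*$, where the velocity coefficient is zero, produces (i) (the $\|Ax-b\|^2$-integral coming, as in Theorem~\ref{th_th1}, from the $\sigma$-term, hence for $\sigma>0$). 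For (iii) I take $\rho=\rho^*$: the decrease still holds and $\eta\ge0$, so $\mathcal E^{\lambda^*,\rho^*}$ is bounded; as $\mathcal E_0(t)=t^{2\rho^*}\beta(t)\bigl(\mathcal L^\sigma(x(t),\lambda^*)-\mathcal L^\sigma(x^*,\lambda^*)\bigr)$ and $\mathcal L^\sigma-\mathcal L^\sigma=(\mathcal L-\mathcal L)+\tfrac\sigma2\|Ax-b\|^2$ with both summands nonnegative, the two rates of (iii) follow once $\lim_t t^{2(\alpha-\tau)/3}\beta(t)=+\infty$ (for $\sigma=0$ the constraint bound is instead obtained by the Lagrangian-duality device of Lemma~\ref{le_A2}, as in Theorem~\ref{th_th1_1}). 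Finally, for (iv), boundedness of $\mathcal E^{\lambda^*,\rho}$ gives $\eta(t)\|x(t)-x^*\|^2=\mathcal{O}(1)$ and $\|\theta(t)(x(t)-x^*)+t^\rho\dot x(t)\|=\mathcal{O}(1)$; with $\rho=\rho^*$ (so $\eta(t)$ is a positive constant times $t^{2\rho^*-2}$) this gives $\|x(t)-x^*\|=\mathcal{O}(t^{1-\rho^*})$ and hence $t^{\rho^*}\|\dot x(t)\|\le\tfrac1\delta t^{\rho^*-1}\|x(t)-x^*\|+\mathcal{O}(1)=\mathcal{O}(1)$, i.e. $\|\dot x\|+\|\dot\lambda\|=\mathcal{O}(t^{-(\alpha-\tau)/3})$; when $(\alpha,\tau)=(3,0)$, where $\eta$ vanishes at $\rho^*=1$, I instead use an arbitrary $\rho\in(0,1)$ and obtain $\|\dot x\|+\|\dot\lambda\|=\mathcal{O}(t^{-\rho})$.

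I expect the genuine work to be in the reduction of \eqref{eq_A8} claimed in the second paragraph: one must expand the derivative of $\mathcal E^{\lambda^*,\rho}_\epsilon$ and verify that the primal--dual coupling terms produced by the mixed terms $\delta tA^T\dot\lambda$ and $\delta tA\dot x$ cancel and leave precisely the three-term bound, which is what pins down $\theta$, $\eta$ and the value $\delta=\tfrac{3}{2\alpha+\tau}$. A secondary nuisance, already visible above, is the need to isolate the borderline pair $(\alpha,\tau)=(3,0)$, where $\eta$ collapses at the optimal exponent and the velocity estimate in (iv) must be stated with an arbitrarily small loss---the familiar $\alpha=3$ effect for $(\text{IGS}_\alpha)$ reappearing here.
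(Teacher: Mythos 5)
Your proposal is correct and follows essentially the same route as the paper's proof: since $\tfrac1\delta=\tfrac{2\alpha+\tau}{3}$, your choices $\theta(t)=\tfrac1\delta t^{\rho-1}$ and $\eta(t)=\tfrac1\delta(\alpha+1-2\rho-\tfrac1\delta)t^{2\rho-2}$ coincide with the paper's, and the three-term bound, the integration over $\rho\in[0,\tfrac{\alpha-\tau}{3}]$, and the special treatment of $(\alpha,\tau)=(3,0)$ in (iv) all match. Your explicit remark that the constraint estimates in (i) and (iii) rely on $\sigma>0$ (with Lemma~\ref{le_A2} as the fallback when $\sigma=0$) is a point the paper leaves implicit.
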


\begin{proof}
Given $\lambda\in\mathcal{H}_2$, 	define  $\mathcal{E}^{\lambda,\rho}(t)$ and $\mathcal{E}^{\lambda,\rho}_{\epsilon}(t)$  as \eqref{eq_A1}, \eqref{eq_A_ex} with
 $r=s=1$, $\rho\in [0,\frac{\alpha-\tau}{3}]$ and
\begin{equation}\label{eq_th3_1}
	\theta(t) = \frac{2\alpha+\tau}{3}t^{\rho-1}, \quad \eta(t) = \frac{2\alpha+\tau}{3}(1+\frac{\alpha-\tau}{3}-2\rho)t^{2\rho-2}.
\end{equation}
By computation, we have
\begin{equation}\label{eq_th3_2}
	\eta(t) \geq \frac{2\alpha+\tau}{3}(1-\frac{\alpha-\tau}{3})t^{2\rho-2}\geq 0,
\end{equation}
and  \eqref{eq_A4}, \eqref{eq_A6} are satisfied. Since $0\leq\tau\leq\alpha\leq 3$ and $\rho\in[0,\frac{\alpha-\tau}{3}]$, we also can verify that
\begin{eqnarray*}\label{eq_th3_3}
	\theta(t)\dot{\theta}(t)+\frac{\dot{\eta}(t)}{2} =\frac{2\alpha+\tau}{3}(\alpha+1-2\rho)(\rho-1)t^{2\rho-3}\leq 0.
\end{eqnarray*}
Then \eqref{eq_A3}-\eqref{eq_A6} hold for any $t\geq t_0$.

It is easy to verify that
\begin{eqnarray}\label{eq_th3_4}
	\theta(t)+\rho t^{\rho-1}-\alpha t^{\rho-r} = (\rho-\frac{\alpha-\tau}{3})t^{\rho-1}\leq 0
\end{eqnarray}
and
\begin{eqnarray}\label{eq_th3_5}
	t^{\rho}\dot{\beta}(t)+(2\rho t^{\rho-1}-\theta(t))\beta(t)&=& t^{\rho-1}(t\dot{\beta}(t)-\tau \beta(t)+ (\tau+2\rho-\frac{2\alpha+\tau}{3})\beta(t))\nonumber\\
	&\leq&  2(\rho-\frac{\alpha-\tau}{3})t^{\rho-1}\beta(t)\\
	&\leq& 0 \nonumber
\end{eqnarray}
for all $t\geq t_0$.  This together with \eqref{eq_A8} in case $\lambda=\lambda^*$ implies
\begin{eqnarray}\label{eq_th3_6}
	\dot{\mathcal{E}}^{\lambda^*,\rho}_{\epsilon}(t)	&\leq& (\rho-\frac{\alpha-\tau}{3})t^{2\rho-1}(\|\dot{x}(t)\|^2+\|\dot{\lambda}(t)\|^2) +2(\rho-\frac{\alpha-\tau}{3})t^{2\rho-1}\beta(t)(\mathcal{L}^{\sigma}(x(t),\lambda^*)- \mathcal{L}^{\sigma}(x^*,\lambda^*))\nonumber\\
	&&- \frac{\sigma t^{2\rho-1}\beta(t)}{2\delta}\|Ax(t)-b)\|^2\\
	&\leq& 0.\nonumber
\end{eqnarray}\label{eq_th3_7}
 Then ${\mathcal{E}}^{\lambda^*,\rho}_{\epsilon}(\cdot)$ is nonincreasing on $[t_0,+\infty)$,
\begin{equation*}
	{\mathcal{E}}^{\lambda^*,\rho}_{\epsilon}(t)\leq {\mathcal{E}}^{\lambda^*,\rho}_{\epsilon}(t_0),\quad \forall\ t\geq t_0.
\end{equation*}
Since $\int^{+\infty}_{t_0}t^{\frac{\alpha-\tau}{3}}\|\epsilon(t)\|dt<+\infty$,  for any $\rho\in[0,\frac{\alpha-\tau}{3}]$, we have
\[ \int^{+\infty}_{t_0}t^{\rho}\|\epsilon(t)\|dt<+\infty.\]
By similar arguments in proof of Theorem \ref{th_th1}, we obtain the boundedness of ${\mathcal{E}}^{\lambda^*,\rho}(\cdot)$ and ${\mathcal{E}}^{\lambda^*,\rho}_{\epsilon}(\cdot)$. Since  \eqref{eq_th3_6} holds for any $\rho\in[0,\frac{\alpha-\tau}{3}]$, integrating it on $[t_0,+\infty)$,  and following from the boundedness of ${\mathcal{E}}^{\lambda^*,\rho}_{\epsilon}(\cdot)$, we get the results $(i)-(ii)$.

Since ${\mathcal{E}}^{\lambda^*,\rho}(\cdot)$ is bounded for any $\rho\in[0,\frac{\alpha-\tau}{3}]$, by the definition of ${\mathcal{E}}^{\lambda^*,\rho}(\cdot)$ and \eqref{AugL}, \eqref{eq_th3_1},  we obtain $(iii)$,
\begin{equation}\label{eq_th3_8}
	\sup_{t\geq t_0} \sqrt{1+\frac{\alpha-\tau}{3}-2\rho}\times t^{\rho-1}\|x(t)-x^*\|<+\infty
\end{equation}
and
\begin{equation}\label{eq_th3_9}
	\sup_{t\geq t_0} \|\frac{2\alpha+\tau}{3}t^{\rho-1}(x(t)-x^*)+t^\rho\dot{x}(t)\|<+\infty
\end{equation}
for any $\rho\in[0,\frac{\alpha-\tau}{3}]$.

When $\tau=0$ and $\alpha = 3$: for any $\rho\in(0,1)$, $1+\frac{\alpha-\tau}{3}-2\rho = 2(1-\rho)> 0$, it follows from \eqref{eq_th3_8} and \eqref{eq_th3_9} that
\[\sup_{t\geq t_0} t^{\rho-1}\|x(t)-x^*\|<+\infty\]
and then
\begin{eqnarray*}\label{eq_th3_10}
	 \sup_{t\geq t_0}t^{\rho}\|\dot{x}(t)\|\leq 2\sup_{t\geq t_0}t^{\rho-1}\|x(t)-x^*\|+\sup_{t\geq t_0} \|2t^{\rho-1}(x(t)-x^*)+t^\rho\dot{x}(t)\|)< +\infty,
\end{eqnarray*}
for any  $\rho\in(0,1)$. Similarly $\sup_{t\geq t_0} t^{\rho}\|\dot{\lambda}(t)\|<+\infty$.

Otherwise $\alpha-\tau<3$, taking $\rho = \frac{\alpha-\tau}{3}$, then $1+\frac{\alpha-\tau}{3}-2\rho =1-\frac{\alpha-\tau}{3}>0$, by similar discussions in above, we get $(iv)$.
\end{proof}

\begin{remark}\label{re_th3_1}
Following from above proof process, when $\tau=0$ and $\alpha=3$:
\[\sup_{t\geq t_0} t^{\rho-1}(\|x(t)-x^*\|+\|\lambda(t)-\lambda^*\|)<+\infty,\quad \forall \rho\in(0,1),\]
Otherwise:
	\[\sup_{t\geq t_0} t^{\frac{\alpha-\tau}{3}-1}(\|x(t)-x^*\|+\|\lambda(t)-\lambda^*\|)<+\infty. \]
When the coercive condition \eqref{eq_coercive} satisfied , we also can obtain the boundedness of $x(t)$ of dynamic \eqref{ass_th3} with $\alpha\leq 3$.

\end{remark}

\begin{remark}\label{re_th3_2}
	 Theorem \ref{th_th3} extends the results in  \cite[Corollary 2.9]{HeHF2020} and \cite[Theorem 3.2]{Zeng2019} to general case. Taking $A=0$, $b=0$, the dynamic \eqref{dy_dy3} reduces to
	\[\ddot{x}(t)+\frac{\alpha}{t}\dot{x}(t) +\beta(t) \nabla f(x(t)) =\epsilon(t), \]
	 with $\alpha\leq 3$ for solving unconstrained optimization problem, then Theorem \ref{th_th3} also complements the results in \cite[Theorem A.1]{AttouchCRF2019}, which considered the case $\alpha\geq 3$.
\end{remark}

Taking $t\dot{\beta}(t)= \tau \beta(t)$, in which $\beta(t) = \mu t^{\tau}$ with $\mu>0$, we investigate the improved rate of convergence.

\begin{theorem}\label{th_th3_1}
Let $\beta(t)= \mu t^{\tau}$  with $\mu>0, \ 0\leq \tau\leq \alpha \leq 3$, $\delta=\frac{3}{2\alpha+\tau},\ \sigma\geq 0$. Suppose $\int^{+\infty}_{t_0}t^{(\alpha-\tau)/3}\|\epsilon(t)\|dt<+\infty$. Let   $(x(t),\lambda(t))$ be a solution of dynamic \eqref{dy_dy3}. For any $(x^*,\lambda^*)\in\Omega$:
\[ |f(x(t))-f(x^*)| =\mathcal{O}(\frac{1}{t^{(2\alpha+\tau)/3}}),\quad \|Ax(t)-b\| =\mathcal{O}(\frac{1}{t^{(2\alpha+\tau)/3}}).\]
%	\item [(iii)] When $\tau\in[0,\alpha)$:
%	    \[ \int^{+\infty}_{t_0}t^{2\rho-1}\|\dot{x}(t)\|^2+\|\dot{\lambda}(t)\|^2 dt <+\infty, \quad \forall\ \rho\in[0,\frac{\alpha-\tau}{3}).\]
%	
%    \item [(iv)] When $\tau = 0$ and $\alpha = 3$:
%	   \[\|\dot{x}(t)\|+\|\dot{\lambda}(t)\|=\mathcal{O}(\frac{1}{t^\rho}),\quad \forall  \ \rho\in(0,1).\]
%%	 When $\tau\in(0,\alpha]$ or $\alpha<3$:	
%		Otherwise:
%	   \[ \|\dot{x}(t)\|+\|\dot{\lambda}(t)\|=\mathcal{O}(\frac{1}{t^{\frac{\alpha-\tau}{3}}}). \]
\end{theorem}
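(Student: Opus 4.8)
The plan is to follow the template of the proof of Theorem~\ref{th_th1_1}, the key new point being that for $\beta(t)=\mu t^{\tau}$ a ``critical'' choice of the exponent $\rho$ forces the energy estimate to hold for \emph{every} multiplier $\lambda$, not merely for $\lambda=\lambda^{*}$. Given $\lambda\in\mathcal{H}_{2}$, I would use the energy functions $\mathcal{E}^{\lambda,\rho}(t)$ and $\mathcal{E}^{\lambda,\rho}_{\epsilon}(t)$ of \eqref{eq_A1}, \eqref{eq_A_ex} associated with \eqref{dy_dy3} ($r=s=1$), with $\theta(t),\eta(t)$ as in \eqref{eq_th3_1} and the prescribed $\delta=\frac{3}{2\alpha+\tau}$, but now with
\[
\rho=\frac{\alpha-\tau}{3},
\]
the largest exponent allowed in Theorem~\ref{th_th3}. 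Because $\beta(t)=\mu t^{\tau}$ gives $t\dot\beta(t)=\tau\beta(t)$, both \eqref{eq_th3_4} and \eqref{eq_th3_5} become identities equal to $0$; hence in \eqref{eq_th3_6}--\eqref{eq_A8} the coefficients of $\|\dot x(t)\|^{2}+\|\dot\lambda(t)\|^{2}$ and of $\mathcal{L}^{\sigma}(x(t),\lambda)-\mathcal{L}^{\sigma}(x^{*},\lambda)$ both vanish, and the estimate reduces to
\[
\dot{\mathcal{E}}^{\lambda,\rho}_{\epsilon}(t)\le-\frac{\sigma\,t^{2\rho-1}\beta(t)}{2\delta}\,\|Ax(t)-b\|^{2}\le0,\qquad \forall\,t\ge t_{0},\ \forall\,\lambda\in\mathcal{H}_{2}.
\]
This $\lambda$-uniform dissipation is the crux of the improvement.

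From this, $\mathcal{E}^{\lambda,\rho}_{\epsilon}(t)\le\mathcal{E}^{\lambda,\rho}_{\epsilon}(t_{0})=\mathcal{E}^{\lambda,\rho}(t_{0})$ for all $t\ge t_{0}$ and all $\lambda$. Since $\eta(t)\ge0$ by \eqref{eq_th3_2}, the terms $\mathcal{E}_{1}(t),\mathcal{E}_{2}(t)$ are nonnegative, so with $\mathcal{E}_{0}(t)=t^{2\rho}\beta(t)\big(\mathcal{L}^{\sigma}(x(t),\lambda)-\mathcal{L}^{\sigma}(x^{*},\lambda)\big)$ and the definition of $\mathcal{E}^{\lambda,\rho}_{\epsilon}$,
\[
t^{2\rho}\beta(t)\big(\mathcal{L}^{\sigma}(x(t),\lambda)-\mathcal{L}^{\sigma}(x^{*},\lambda)\big)\le\mathcal{E}^{\lambda,\rho}(t_{0})+\int_{t_{0}}^{t}\big\langle\theta(w)(x(w)-x^{*})+w^{\rho}\dot x(w),\,w^{\rho}\epsilon(w)\big\rangle\,dw.
\]
To bound the right-hand side uniformly in $\lambda$, I would first treat $\lambda=\lambda^{*}$ as in Theorem~\ref{th_th3}: since $\rho=\frac{\alpha-\tau}{3}$, the hypothesis $\int_{t_{0}}^{+\infty}t^{(\alpha-\tau)/3}\|\epsilon(t)\|\,dt<+\infty$ reads $\int_{t_{0}}^{+\infty}w^{\rho}\|\epsilon(w)\|\,dw<+\infty$, and Lemma~\ref{le_A1} applied to $\mu(t)=\|\theta(t)(x(t)-x^{*})+t^{\rho}\dot x(t)\|$ (using $\mathcal{E}_{0},\mathcal{E}_{2}\ge0$ when $\lambda=\lambda^{*}$) yields $M:=\sup_{t\ge t_{0}}\|\theta(t)(x(t)-x^{*})+t^{\rho}\dot x(t)\|<+\infty$. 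By Cauchy--Schwarz the integral term is then at most $M\int_{t_{0}}^{+\infty}w^{\rho}\|\epsilon(w)\|\,dw$, and since $\sup_{\|\lambda\|\le\varrho}\mathcal{E}^{\lambda,\rho}(t_{0})<+\infty$ (it is a continuous, at most quadratic function of $\lambda$), the constant $C:=\sup_{\|\lambda\|\le\varrho}\mathcal{E}^{\lambda,\rho}(t_{0})+M\int_{t_{0}}^{+\infty}w^{\rho}\|\epsilon(w)\|\,dw$ is finite for every $\varrho>0$. Using $t^{2\rho}\beta(t)=\mu t^{(2\alpha+\tau)/3}$, $\mathcal{L}^{\sigma}(x^{*},\lambda)=f(x^{*})$ and $\frac{\sigma}{2}\|Ax(t)-b\|^{2}\ge0$, this gives
\[
f(x(t))-f(x^{*})+\langle\lambda,\,Ax(t)-b\rangle\le\frac{C}{\mu\,t^{(2\alpha+\tau)/3}},\qquad\forall\,t\ge t_{0},\ \|\lambda\|\le\varrho.
\]

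I would then finish with the duality argument of Theorem~\ref{th_th1_1}. Fixing $\varrho>\|\lambda^{*}\|$ and taking the supremum over $\|\lambda\|\le\varrho$ (Lemma~\ref{le_A2}) gives $f(x(t))-f(x^{*})+\varrho\|Ax(t)-b\|\le C/(\mu t^{(2\alpha+\tau)/3})$, while convexity of $f$ together with $\nabla f(x^{*})=-A^{T}\lambda^{*}$ and $Ax^{*}=b$ (see \eqref{saddle_point}) gives $f(x(t))-f(x^{*})\ge-\|\lambda^{*}\|\,\|Ax(t)-b\|$. Subtracting, $(\varrho-\|\lambda^{*}\|)\|Ax(t)-b\|\le C/(\mu t^{(2\alpha+\tau)/3})$, so $\|Ax(t)-b\|=\mathcal{O}(1/t^{(2\alpha+\tau)/3})$; plugging this back into $-\|\lambda^{*}\|\,\|Ax(t)-b\|\le f(x(t))-f(x^{*})\le C/(\mu t^{(2\alpha+\tau)/3})$ yields $|f(x(t))-f(x^{*})|=\mathcal{O}(1/t^{(2\alpha+\tau)/3})$.

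The main obstacle is really only the first step: recognising that $\rho=\frac{\alpha-\tau}{3}$ annihilates the two dissipative coefficients, so that \eqref{eq_A8} becomes a $\lambda$-independent monotonicity statement — this is what upgrades the constraint rate from the $\mathcal{O}(1/t^{(\alpha-\tau)/3}\sqrt{\beta(t)})=\mathcal{O}(1/t^{(2\alpha+\tau)/6})$ of Theorem~\ref{th_th3} to $\mathcal{O}(1/t^{(2\alpha+\tau)/3})$; the remainder is the same bookkeeping as in Theorem~\ref{th_th1_1}. A small point to check is the degenerate subcase $\tau=0,\ \alpha=3$, where $\rho=1$ and $\eta\equiv0$: the argument still applies, since only $\eta\ge0$ is used and the bound on $M$ does not require $\eta>0$; likewise the auxiliary identities behind \eqref{eq_A8} hold for $\rho=\frac{\alpha-\tau}{3}$ because this exponent lies in the range $[0,\frac{\alpha-\tau}{3}]$ already handled in Theorem~\ref{th_th3}.
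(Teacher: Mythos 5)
Your proposal is correct and follows essentially the same route as the paper: the paper's proof also takes $\rho=\frac{\alpha-\tau}{3}$ so that both the velocity coefficient \eqref{eq_th3_4} and the scaling coefficient $t^{\rho}\dot{\beta}(t)+(2\rho t^{\rho-1}-\theta(t))\beta(t)$ vanish, making $\mathcal{E}^{\lambda,\rho}_{\epsilon}$ nonincreasing for every $\lambda\in\mathcal{H}_2$, and then invokes the duality argument of Theorem \ref{th_th1_1} (Lemma \ref{le_A2} plus the saddle-point lower bound). Your write-up merely fills in the details the paper leaves implicit, including the harmless degenerate case $\tau=0,\ \alpha=3$.
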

\begin{proof}
	Recall the energy  functions $\mathcal{E}^{\lambda,\rho}(t)$ and $\mathcal{E}^{\lambda,\rho}_{\epsilon}(t)$  from Theorem \ref{th_th3} with $\beta(t)=\mu t^{\tau}$, $\rho=\frac{\alpha-\tau}{3}$. Then
\[ t^{\rho}\dot{\beta}(t)+(2\rho t^{\rho-1}-\theta(t))\beta(t) =0,\]
this together with \eqref{eq_th3_4} and \eqref{eq_A8} yields
\begin{eqnarray}\label{eq_th2_9}
	\dot{\mathcal{E}}^{\lambda,\rho}_{\epsilon}(t)\leq - \frac{\sigma \beta(t)}{2\delta}\|Ax(t)-b)\|^2 \leq 0, \quad \forall t\geq t_0, \lambda\in\mathcal{H}_2.
\end{eqnarray}
 By similar arguments in Theorem \ref{th_th1_1}, we obtain the results.
\end{proof}
From Theorem \ref{th_th3_1}, we obtain the following results in the case $\tau=0$ and $\tau=\alpha$, respectively.
\begin{corollary}\label{cor_th3_1}
	Let $\beta(t)= \beta>0$, $\alpha \leq 3$, $\delta=\frac{3}{2\alpha},\ \sigma\geq 0$. Suppose $\int^{+\infty}_{t_0}t^{\alpha/3}\|\epsilon(t)\|dt<+\infty$. Let   $(x(t),\lambda(t))$ be a solution of dynamic \eqref{dy_dy3}. For any $(x^*,\lambda^*)\in\Omega$:
\[ |f(x(t))-f(x^*)| =\mathcal{O}(\frac{1}{t^{2\alpha/3}}),\quad \|Ax(t)-b\| =\mathcal{O}(\frac{1}{t^{2\alpha/3}}).\]
\end{corollary}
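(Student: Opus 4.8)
The plan is to obtain this statement as a direct specialization of Theorem~\ref{th_th3_1} to the scaling exponent $\tau=0$. Theorem~\ref{th_th3_1} treats the whole family $\beta(t)=\mu t^{\tau}$ with $\mu>0$ and $0\le\tau\le\alpha\le 3$, and the value $\tau=0$ collapses this family to the constant scaling $\beta(t)\equiv\mu$; identifying $\mu=\beta$ recovers exactly the setting of the present corollary.

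First I would verify that all hypotheses of Theorem~\ref{th_th3_1} hold with $\tau=0$: the chain $0\le\tau=0\le\alpha\le 3$ is just the assumption $\alpha\le 3$ made here (together with the standing positivity of the constant $\alpha$); the relation $\delta=\frac{3}{2\alpha+\tau}=\frac{3}{2\alpha}$ is precisely the hypothesis on $\delta$ imposed in the corollary; the penalty parameter obeys $\sigma\ge 0$; and the weighted integrability of the perturbation holds since $\int_{t_0}^{+\infty}t^{(\alpha-\tau)/3}\|\epsilon(t)\|\,dt=\int_{t_0}^{+\infty}t^{\alpha/3}\|\epsilon(t)\|\,dt<+\infty$ by assumption.

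Then the conclusion is immediate: Theorem~\ref{th_th3_1} gives $|f(x(t))-f(x^*)|=\mathcal{O}(1/t^{(2\alpha+\tau)/3})$ and $\|Ax(t)-b\|=\mathcal{O}(1/t^{(2\alpha+\tau)/3})$ for every $(x^*,\lambda^*)\in\Omega$, and substituting $\tau=0$ turns the exponent $(2\alpha+\tau)/3$ into $2\alpha/3$, which is the claimed pair of rates. If one prefers a self-contained argument, it suffices to rerun the proof of Theorem~\ref{th_th3_1}: take the energy functions $\mathcal{E}^{\lambda,\rho}(t)$, $\mathcal{E}^{\lambda,\rho}_{\epsilon}(t)$ of Theorem~\ref{th_th3} with $\beta(t)\equiv\beta$ and $\rho=\alpha/3$, observe that $t^{\rho}\dot\beta(t)+(2\rho t^{\rho-1}-\theta(t))\beta(t)=0$ since $\dot\beta\equiv 0$ and $\tau=0$, so by \eqref{eq_th3_4} and \eqref{eq_A8} each $\mathcal{E}^{\lambda,\rho}_{\epsilon}$ is nonincreasing, and then combine Lemma~\ref{le_A1} (to bound $\|\frac{2\alpha}{3}t^{\rho-1}(x(t)-x^*)+t^{\rho}\dot x(t)\|$) with Lemma~\ref{le_A2} and \eqref{saddle_point} to split the resulting Lagrangian bound into the separate estimates on $f(x(t))-f(x^*)$ and $\|Ax(t)-b\|$, exactly as in Theorem~\ref{th_th1_1}.

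Since the result is a plain corollary, there is essentially no obstacle; the one point worth a line of justification is that the degenerate endpoint $\tau=0$, and within it the sub-case $\tau=0,\ \alpha=3$ that required separate treatment in Theorem~\ref{th_th3}, creates no trouble here. The reason is that the proof is carried out at the single exponent $\rho=(\alpha-\tau)/3$ and never needs the strict inequality $\alpha-\tau<3$: the coefficient $1+\frac{\alpha-\tau}{3}-2\rho=1-\frac{\alpha-\tau}{3}$ appearing in $\eta(t)$ stays nonnegative, and in the extreme case $\alpha=3,\ \tau=0$ it merely vanishes, in which situation the desired rates still follow from the boundedness of $\mathcal{E}^{\lambda^*,\rho}$ alone.
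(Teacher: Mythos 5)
Your proposal is correct and is exactly the paper's route: the corollary is stated as the $\tau=0$ specialization of Theorem \ref{th_th3_1}, and your verification of the hypotheses (including the observation that the exponent $(2\alpha+\tau)/3$ reduces to $2\alpha/3$) matches what the paper intends. Your extra remark that the endpoint case $\alpha=3,\ \tau=0$ causes no difficulty here—because the rate estimates come from the $\mathcal{E}_0$ term and Lemma \ref{le_A2} rather than from positivity of $\eta(t)$—is a correct and slightly more careful justification than the paper provides.
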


\begin{corollary}\label{cor_th3_2}
	Let $\beta(t)= \mu t^{\alpha}$ with $\mu>0, \alpha \leq 3$, $\delta=\frac{1}{\alpha},\ \sigma\geq 0$. Suppose $\int^{+\infty}_{t_0}\|\epsilon(t)\|dt<+\infty$. Let   $(x(t),\lambda(t))$ be a solution of dynamic \eqref{dy_dy3}. For any $(x^*,\lambda^*)\in\Omega$:
\[ |f(x(t))-f(x^*)| =\mathcal{O}(\frac{1}{t^{\alpha}}),\quad \|Ax(t)-b\| =\mathcal{O}(\frac{1}{t^{\alpha}}).\]
\end{corollary}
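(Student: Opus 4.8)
The plan is to obtain Corollary~\ref{cor_th3_2} as the special case $\tau=\alpha$ of Theorem~\ref{th_th3_1}. First I would confirm that $\tau=\alpha$ is admissible there: since $\alpha>0$, the constraint $0\le\tau\le\alpha\le 3$ collapses to $0\le\alpha\le 3$, which is the standing hypothesis $\alpha\le 3$. Substituting $\tau=\alpha$ into the parameters of Theorem~\ref{th_th3_1}, the scaling function $\beta(t)=\mu t^{\tau}$ becomes $\beta(t)=\mu t^{\alpha}$, the damping constant $\delta=\frac{3}{2\alpha+\tau}$ becomes $\delta=\frac{3}{3\alpha}=\frac1\alpha$, and the integrability assumption $\int_{t_0}^{+\infty}t^{(\alpha-\tau)/3}\|\epsilon(t)\|\,dt<+\infty$ collapses to $\int_{t_0}^{+\infty}\|\epsilon(t)\|\,dt<+\infty$ --- exactly the data listed in the corollary. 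Reading off the conclusion of Theorem~\ref{th_th3_1}, whose exponent is $(2\alpha+\tau)/3$, we get $(2\alpha+\alpha)/3=\alpha$, so $|f(x(t))-f(x^*)|=\mathcal{O}(1/t^{\alpha})$ and $\|Ax(t)-b\|=\mathcal{O}(1/t^{\alpha})$, which is precisely the asserted rate.

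If instead a self-contained argument is wanted, I would rerun the scheme behind Theorem~\ref{th_th1_1}: set $\rho=\frac{\alpha-\tau}{3}=0$ in the energy functional $\mathcal{E}^{\lambda,\rho}_{\epsilon}$ of \eqref{eq_A_ex}, with $\theta(t)=\alpha t^{-1}$ and $\eta(t)=\alpha t^{-2}$ as in \eqref{eq_th3_1}. One then checks that the choice $\beta(t)=\mu t^{\alpha}$ makes the scaling term $t^{\rho}\dot{\beta}(t)+(2\rho t^{\rho-1}-\theta(t))\beta(t)$ vanish and that $\theta(t)+\rho t^{\rho-1}-\alpha t^{\rho-1}=0$; plugging this into the master inequality \eqref{eq_A8} (valid for every $\lambda\in\mathcal{H}_2$, not just $\lambda^*$) gives $\dot{\mathcal{E}}^{\lambda,\rho}_{\epsilon}(t)\le -\frac{\sigma\beta(t)}{2\delta}\|Ax(t)-b\|^2\le 0$. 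Monotonicity together with Lemma~\ref{le_A1} and $\int_{t_0}^{+\infty}\|\epsilon(t)\|\,dt<+\infty$ bounds $\mathcal{E}^{\lambda,\rho}_{\epsilon}$, whence $\mathcal{L}(x(t),\lambda)-\mathcal{L}(x^*,\lambda)\le C_\lambda/(\mu t^{\alpha})$ after discarding the nonnegative $\frac{\sigma}{2}\|Ax(t)-b\|^2$ term in $\mathcal{L}^{\sigma}$; finally Lemma~\ref{le_A2} with $\varrho>\|\lambda^*\|$ and the saddle-point relation \eqref{saddle_point} upgrade this to the two-sided bounds on $|f(x(t))-f(x^*)|$ and $\|Ax(t)-b\|$.

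There is essentially no obstacle here --- the result is a direct specialization. The only point worth a moment's attention is that Theorem~\ref{th_th3_1}, unlike Theorem~\ref{th_th3}, permits $\sigma\ge 0$ rather than requiring $\sigma>0$; it is this relaxation that allows the $\frac{\sigma}{2}\|Ax(t)-b\|^2$ contribution to be dropped while still closing the estimate, and one should verify that taking $\tau=\alpha$ does not interfere with that step of the argument (it does not).
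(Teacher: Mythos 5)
Your proposal is correct and follows exactly the paper's route: Corollary \ref{cor_th3_2} is stated as the specialization $\tau=\alpha$ of Theorem \ref{th_th3_1}, under which $\delta=3/(2\alpha+\tau)=1/\alpha$, the weight $t^{(\alpha-\tau)/3}$ becomes $1$, and the rate exponent $(2\alpha+\tau)/3$ becomes $\alpha$. Your supplementary self-contained argument (taking $\rho=0$, $\theta(t)=\alpha t^{-1}$, verifying the cancellations in \eqref{eq_A8}, and invoking Lemma \ref{le_A2}) matches the paper's proofs of Theorems \ref{th_th1_1} and \ref{th_th3_1}, so no further comment is needed.
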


 \begin{remark}\label{re_th31_1}
 	Taking $\beta=1$, the dynamic \eqref{dy_dy3} has been investigate in \cite{HeHF2020} and \cite{Zeng2019} for $\alpha\leq 3$.  Corollary \ref{cor_th3_1} improves the convergence rates of  \cite[Corollary 2.9]{HeHF2020} and \cite[Theorem 3.2]{Zeng2019}, which only obtain $\mathcal{O}(\frac{1}{t^{\alpha/3}})$ convergence rate of $|f(x(t))-f(x^*)|$ and $\|Ax(t)-b\|$, and it also can be viewed as analogs of the results in \cite{AttouchCRR2019,Vassilis2018}, where the convergence rate analysis of $(IGS_{\alpha,\epsilon})$ with $\alpha(t)=\frac{\alpha}{t},\ \alpha\leq 3$ for unconstrained optimization problem \eqref{min_fun}.   Corollary \ref{cor_th3_2} shows the optimal convergence rate we can expect of dynamic \eqref{dy_dy3} with $\alpha\leq 3$.
 \end{remark}

Next, we investigate  the convergence rate of dynamic \eqref{dy_dy3} with $\alpha>3$. The similar results can be found in \cite{HeHF2021C}.

\begin{theorem}\label{th_th4}
	 Assume that $\beta:[t_0,+\infty)\to(0,+\infty)$ is continuous differentiable function with
\begin{equation*}
	t\dot{\beta}(t)\leq (\frac{1}{\delta}-2) \beta(t),
\end{equation*}
and  $2\leq\frac{1}{\delta}< \alpha-1$.  Let $\epsilon:[t_0,+\infty)\to\mathcal{H}_1$ with
\begin{equation*}
\int^{+\infty}_{t_0}t\|\epsilon(t)\|dt<+\infty.
\end{equation*}
Let $(x(t),\lambda(t))$ be a global solution of the dynamic \eqref{dy_dy3} and y $(x^*,\lambda^*)\in\Omega$. Then   $(x(t),\lambda(t))$ is bounded and the following conclusions hold:
\begin{itemize}
	\item [(i)] $\int^{+\infty}_{t_0}t((\frac{1}{\delta}-2)\beta(t)-t\dot{\beta}(t))(\mathcal{L}^{\sigma}(x(t),\lambda^*)-\mathcal{L}^{\sigma}(x^*,\lambda^*)) dt <+\infty$.
	\item [(ii)] $\int^{+\infty}_{t_0}t\beta(t)\|Ax(t)-b\|^2 dt <+\infty$,\  $\int^{+\infty}_{t_0}t\|\dot{x}(t)\|^2+\|\dot{\lambda}(t)\|^2 dt <+\infty$.
	\item [(iii)] $\|\dot{x}(t)\|^2+\|\dot{\lambda}(t)\|^2=\mathcal{O}(\frac{1}{t})$.
	\item [(iv)] When $\lim_{t\to+\infty}t^2\beta(t) = +\infty$:
	\[ \mathcal{L}(x(t),\lambda^*)-\mathcal{L}(x^*,\lambda^*) =\mathcal{O}(\frac{1}{t^2\beta(t)}),\ \|Ax(t)-b\| =\mathcal{O}(\frac{1}{t\sqrt{\beta(t)}}).\]
\end{itemize}
\end{theorem}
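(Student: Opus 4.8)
The plan is to run the Lyapunov argument already used for Theorems \ref{th_th1}--\ref{th_th3}, now specialized to $r=s=1$ with $\alpha>3$. I fix $\rho=1$ in the energy functions \eqref{eq_A1}, \eqref{eq_A_ex} (this is dictated by the $t^{2}\beta(t)$ appearing in conclusion (iv) and by the weight $t$ on $\|\dot{x}\|^{2}+\|\dot{\lambda}\|^{2}$ in (ii)), and take the $\rho=1$ version of the profiles used in Theorem \ref{th_th3}, namely the \emph{constants}
\begin{equation*}
\theta(t)\equiv\frac{1}{\delta},\qquad \eta(t)\equiv\frac{1}{\delta}\left(\alpha-1-\frac{1}{\delta}\right).
\end{equation*}
Crucially $\eta>0$ here, precisely because $\frac{1}{\delta}<\alpha-1$; this is the structural difference from Theorem \ref{th_th3}, where $\eta$ typically decays to $0$, and it is what will give \emph{unconditional} boundedness of the trajectory. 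Since $\theta,\eta$ are constant, \eqref{eq_A3}-\eqref{eq_A6} hold trivially ($\theta\dot{\theta}+\frac{1}{2}\dot{\eta}=0$). Feeding this into the master inequality \eqref{eq_A8} with $\lambda=\lambda^{*}$ (so that $\mathcal{L}^{\sigma}(x(t),\lambda^{*})-\mathcal{L}^{\sigma}(x^{*},\lambda^{*})\ge0$), the coefficient of $\|\dot{x}\|^{2}+\|\dot{\lambda}\|^{2}$ becomes $t\left(\theta(t)+\rho t^{\rho-1}-\alpha t^{\rho-r}\right)=\left(\frac{1}{\delta}+1-\alpha\right)t<0$, the coefficient of $\mathcal{L}^{\sigma}(x,\lambda^{*})-\mathcal{L}^{\sigma}(x^{*},\lambda^{*})$ becomes $t\left(t\dot{\beta}(t)+\left(2-\frac{1}{\delta}\right)\beta(t)\right)\le0$ by the standing hypothesis $t\dot{\beta}(t)\le\left(\frac{1}{\delta}-2\right)\beta(t)$, and the penalty term contributes $-\frac{\sigma t\beta(t)}{2\delta}\|Ax(t)-b\|^{2}\le0$; hence $\dot{\mathcal{E}}^{\lambda^{*},1}_{\epsilon}(t)\le0$ on $[t_{0},+\infty)$. (Note the twin bounds $2\le\frac{1}{\delta}<\alpha-1$ are jointly consistent only when $\alpha>3$, which is exactly the regime of this theorem.)

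From monotonicity I reproduce the boundedness bootstrap of Theorem \ref{th_th1}: one gets $\frac{1}{2}\|\frac{1}{\delta}(x(t)-x^{*})+t\dot{x}(t)\|^{2}\le|\mathcal{E}^{\lambda^{*},1}_{\epsilon}(t_{0})|+\int_{t_{0}}^{t}\|\frac{1}{\delta}(x(w)-x^{*})+w\dot{x}(w)\|\,w\|\epsilon(w)\|\,dw$, and then Lemma \ref{le_A1} (whose hypothesis is met because the $\rho=1$ weight on $\epsilon$ is precisely $\int_{t_{0}}^{+\infty}t\|\epsilon(t)\|\,dt<+\infty$) yields $\sup_{t\ge t_{0}}\|\frac{1}{\delta}(x(t)-x^{*})+t\dot{x}(t)\|<+\infty$. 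Combined with $\mathcal{E}^{\lambda^{*},1}(t)\ge0$ this makes both $\mathcal{E}^{\lambda^{*},1}$ and $\mathcal{E}^{\lambda^{*},1}_{\epsilon}$ bounded on $[t_{0},+\infty)$. Since $\eta>0$ is a positive constant, boundedness of $\mathcal{E}_{1}+\mathcal{E}_{2}$ forces $\|x(t)-x^{*}\|$ and $\|\lambda(t)-\lambda^{*}\|$ to be bounded --- hence the trajectory is bounded --- and, together with the bounds on $\|\frac{1}{\delta}(x(t)-x^{*})+t\dot{x}(t)\|$ and its $\lambda$-analogue, forces $t\|\dot{x}(t)\|$ and $t\|\dot{\lambda}(t)\|$ to be bounded, which gives (iii).

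To finish, I integrate the differential inequality over $[t_{0},+\infty)$ and use $\alpha-1-\frac{1}{\delta}>0$ together with the boundedness of $\mathcal{E}^{\lambda^{*},1}_{\epsilon}$ to obtain the three integral estimates (i)--(ii); and when $\lim_{t\to+\infty}t^{2}\beta(t)=+\infty$, boundedness of $\mathcal{E}_{0}(t)=t^{2}\beta(t)\left(\mathcal{L}^{\sigma}(x(t),\lambda^{*})-\mathcal{L}^{\sigma}(x^{*},\lambda^{*})\right)$ together with the decomposition $\mathcal{L}^{\sigma}(x,\lambda^{*})-\mathcal{L}^{\sigma}(x^{*},\lambda^{*})=\mathcal{L}(x,\lambda^{*})-\mathcal{L}(x^{*},\lambda^{*})+\frac{\sigma}{2}\|Ax-b\|^{2}$ (both summands nonnegative) yields (iv). The only step that is not pure bookkeeping copied from Theorems \ref{th_th1} and \ref{th_th3} is the first one: checking that with the above $\theta,\eta$ the two sign requirements inside \eqref{eq_A8} collapse to exactly the stated hypotheses $2\le\frac{1}{\delta}<\alpha-1$ and $t\dot{\beta}(t)\le(\frac{1}{\delta}-2)\beta(t)$. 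A secondary point to watch is that the constraint-violation pieces of (ii) and (iv) lean on $\sigma>0$ --- through the $-\frac{\sigma t\beta}{2\delta}\|Ax-b\|^{2}$ dissipation term and the bound $\mathcal{L}^{\sigma}(x,\lambda^{*})-\mathcal{L}^{\sigma}(x^{*},\lambda^{*})\ge\frac{\sigma}{2}\|Ax-b\|^{2}$ --- so for $\sigma=0$ only the Lagrangian-gap estimates survive unless $\|Ax-b\|$ is controlled separately from the $\lambda$-equation.
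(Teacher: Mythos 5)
Your proposal is correct and follows essentially the same route as the paper: the same energy function with $\rho=1$, the same constant profiles $\theta\equiv\frac{1}{\delta}$ and $\eta\equiv\frac{1}{\delta}\left(\alpha-1-\frac{1}{\delta}\right)=\frac{\alpha\delta-\delta-1}{\delta^{2}}$, the same sign checks in \eqref{eq_A8}, and the same boundedness bootstrap via Lemma \ref{le_A1}. Your closing caveat that the constraint-violation estimates in (ii) and (iv) genuinely require $\sigma>0$ is a fair observation about the theorem's (implicit) hypotheses rather than a defect of your argument.
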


\begin{proof}
	Given $\lambda\in\mathcal{H}_2$, 	define  $\mathcal{E}^{\lambda,\rho}(t)$ and $\mathcal{E}^{\lambda,\rho}_{\epsilon}(t)$  as \eqref{eq_A1}, \eqref{eq_A_ex} with $r=s=\rho=1$ and
\[\theta(t)  = \frac{1}{\delta},\quad \eta(t) = \frac{\alpha\delta-\delta-1}{\delta^2}.\]
Since $\alpha-1>\frac{1}{\delta}\geq 2$, by simple computations we can verify \eqref{eq_A3}-\eqref{eq_A6}. It follows from assumptions that
\begin{eqnarray*}
	t^{\rho}\dot{\beta}(t)+(2\rho t^{\rho-1}-\theta(t))\beta(t)= t\dot{\beta}(t)+(2-\frac{1}{\delta})\beta(t)\leq 0.
\end{eqnarray*}
Taking $\lambda=\lambda^*$, this together with \eqref{eq_A8}  implies
\begin{eqnarray*}
	\dot{\mathcal{E}}^{\lambda^*,\rho}_{\epsilon}(t) 	&\leq& (\frac{1}{\delta}+1-\alpha)t (\|\dot{x}(t)\|^2+\|\dot{\lambda}(t)\|^2)+t(t\dot{\beta}(t)+(2-\frac{1}{\delta})\beta(t))(\mathcal{L}^{\sigma}(x(t),\lambda^*)- \mathcal{L}^{\sigma}(x^*,\lambda^*))\\
	&&- \frac{\sigma t\beta(t)}{2\delta}\|Ax(t)-b\|^2\nonumber\\
	&\leq & 0. \nonumber
\end{eqnarray*}
By similarly arguments in proof of  Theorem \ref{th_th1}, we obtain the boundedness of  ${\mathcal{E}}^{\lambda^*,\rho}(\cdot)$ and ${\mathcal{E}}^{\lambda^*,\rho}_{\epsilon}(\cdot)$. This yields  $(i),(ii),(iv)$.
Since $\eta(t)=\frac{\alpha\delta-\delta-1}{\delta^2}>0$, we get that $(x(t),\lambda(t))$ is bounded and
\[\sup_{t\geq t_0} t(\|\dot{x}(t)\|^2+\|\lambda(t)\|^2)<+\infty.\]
This implies $(iii)$.
\end{proof}

\begin{remark}\label{re_th4_1}
	Theorem \ref{th_th4} extends the results in \cite[Theorem A.1]{AttouchCRF2019} and \cite[Section 3.2]{Attouchcrf2019} from $(IGS_{\alpha,\epsilon})$ with $\alpha(t)=\frac{\alpha}{t},\ \alpha>3$ for  problem \eqref{min_fun} to primal-dual dynamic for problem \eqref{question}. Taking $\beta(t)\equiv 1$, we recover the convergence rate of \cite[Corollary 2.9]{HeHF2020} and \cite[Theorem 3.1]{Zeng2019}, moreover when $A=0,\ b=0$, we get the classical results for $(IGS_{\alpha})$ and $(IGS_{\alpha,\epsilon})$ with $\alpha(t)=\frac{\alpha}{t}$ with $\alpha>3$, which can be seen as a continuous version of the Nesterov method, see \cite{AttouchCPR2018,Aujol2019,May2015,Su2014}.
\end{remark}

Let $t\dot{\beta}(t)= (\frac{1}{\delta}-2) \beta(t)$. We have  $\beta(t) = \mu t^{\frac{1}{\delta}-2}$ with $\mu>0$.  By similar proof of  Theorem \ref{th_th1_1}, we  obtained following results, and the corresponding results of unperturbed case can be found in \cite[Proposition 6.3]{AttouchBCR2021}.

\begin{theorem}\label{th_th4_1}
Let $\beta(t)= \mu t^{1/\delta-2}$  with $\mu>0, \ 2\leq \frac{1}{\delta}< \alpha-1$, $\sigma\geq 0$. Suppose $\int^{+\infty}_{t_0}t\|\epsilon(t)\|dt<+\infty$. Let   $(x(t),\lambda(t))$ be a solution of dynamic \eqref{dy_dy3}. For any $(x^*,\lambda^*)\in\Omega$:
\[ |f(x(t))-f(x^*)| =\mathcal{O}(\frac{1}{t^{1/\delta}}),\quad \|Ax(t)-b\| =\mathcal{O}(\frac{1}{t^{1/\delta}}).\]
\end{theorem}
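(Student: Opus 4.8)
The plan is to imitate the proof of Theorem \ref{th_th1_1}, exploiting that $\beta(t)=\mu t^{1/\delta-2}$ is precisely the scaling that annihilates the $\beta$-term in the Lyapunov derivative, so that the energy estimate becomes valid for \emph{every} multiplier $\lambda\in\mathcal{H}_2$, not only for $\lambda^*$.

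\textbf{Step 1: energy function and cancellation.} I would reuse the energy functions $\mathcal{E}^{\lambda,\rho}(t)$ and $\mathcal{E}^{\lambda,\rho}_{\epsilon}(t)$ from the proof of Theorem \ref{th_th4}, i.e. with $r=s=\rho=1$, $\theta(t)=\frac{1}{\delta}$ and $\eta(t)=\frac{\alpha\delta-\delta-1}{\delta^2}>0$; the structural relations \eqref{eq_A3}--\eqref{eq_A6} were already verified there under $2\le\frac1\delta<\alpha-1$. Since $\beta(t)=\mu t^{1/\delta-2}$ satisfies $t\dot\beta(t)=(\frac1\delta-2)\beta(t)$, one has $t^{\rho}\dot\beta(t)+(2\rho t^{\rho-1}-\theta(t))\beta(t)=t\dot\beta(t)+(2-\frac1\delta)\beta(t)=0$. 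Feeding this into the master inequality \eqref{eq_A8} gives, for every $\lambda\in\mathcal{H}_2$,
\[
\dot{\mathcal{E}}^{\lambda,\rho}_{\epsilon}(t)\le\Big(\tfrac1\delta+1-\alpha\Big)\,t\,(\|\dot x(t)\|^2+\|\dot\lambda(t)\|^2)-\frac{\sigma t\beta(t)}{2\delta}\|Ax(t)-b\|^2\le 0,
\]
because $\frac1\delta+1-\alpha<0$ and $\sigma\ge0$; crucially the term carrying $\mathcal{L}^{\sigma}(x(t),\lambda)-\mathcal{L}^{\sigma}(x^*,\lambda)$ has dropped out, so its sign is irrelevant and $\mathcal{E}^{\lambda,\rho}_{\epsilon}(\cdot)$ is nonincreasing on $[t_0,+\infty)$ for each fixed $\lambda$.

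\textbf{Step 2: a bound uniform over a ball of multipliers.} The perturbation correction $\int_{t_0}^{t}\langle\theta(w)(x(w)-x^*)+w\dot x(w),\,w\,\epsilon(w)\rangle\,dw$ in \eqref{eq_A_ex} involves only $x(\cdot)$, hence is independent of $\lambda$; Theorem \ref{th_th4} already supplies $\sup_{t\ge t_0}\|\frac1\delta(x(t)-x^*)+t\dot x(t)\|<+\infty$ (it dominates $\mathcal{E}_1$), so together with $\int_{t_0}^{+\infty}t\|\epsilon(t)\|\,dt<+\infty$ and Cauchy--Schwarz this correction is bounded uniformly in $t$ by some $C_0$. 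Combining this with Step 1 and $\mathcal{E}_1,\mathcal{E}_2\ge0$ gives, for all $t\ge t_0$,
\[
\mathcal{E}_0(t)=\mu t^{1/\delta}\big(\mathcal{L}^{\sigma}(x(t),\lambda)-\mathcal{L}^{\sigma}(x^*,\lambda)\big)\le \mathcal{E}^{\lambda,\rho}(t_0)+C_0 .
\]
Since $\lambda\mapsto\mathcal{E}^{\lambda,\rho}(t_0)$ is continuous, $\sup_{\|\lambda\|\le\varrho}\mathcal{E}^{\lambda,\rho}(t_0)<+\infty$; writing $C=C(\varrho)$ for the resulting constant and using $Ax^*=b$ and $\sigma\ge0$, this reads $f(x(t))-f(x^*)+\langle\lambda,Ax(t)-b\rangle\le \frac{C}{\mu\,t^{1/\delta}}$ for all $\|\lambda\|\le\varrho$.

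\textbf{Step 3: from the Lagrangian gap to the objective and feasibility.} Fix $\varrho>\|\lambda^*\|$. Taking the supremum over $\|\lambda\|\le\varrho$ via Lemma \ref{le_A2} yields $f(x(t))-f(x^*)+\varrho\|Ax(t)-b\|\le \frac{C}{\mu\,t^{1/\delta}}$. On the other hand, convexity of $f$ together with \eqref{saddle_point} gives $f(x(t))-f(x^*)\ge-\langle\lambda^*,Ax(t)-b\rangle\ge-\|\lambda^*\|\,\|Ax(t)-b\|$; subtracting, $(\varrho-\|\lambda^*\|)\|Ax(t)-b\|\le \frac{C}{\mu\,t^{1/\delta}}$, so $\|Ax(t)-b\|=\mathcal{O}(t^{-1/\delta})$, and then $-\|\lambda^*\|\|Ax(t)-b\|\le f(x(t))-f(x^*)\le \frac{C}{\mu t^{1/\delta}}$ gives $|f(x(t))-f(x^*)|=\mathcal{O}(t^{-1/\delta})$, as claimed. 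The main obstacle is the bookkeeping in Step 2: one must check that the only $\lambda$-dependent (and sign-indefinite) piece of the Lyapunov function is $\mathcal{E}_0$ and that the resulting estimate is uniform on $\{\|\lambda\|\le\varrho\}$, since this uniformity is exactly what licenses the passage to $f(x(t))-f(x^*)+\varrho\|Ax(t)-b\|$ through Lemma \ref{le_A2}; everything else is the routine machinery (Gronwall-type Lemma \ref{le_A1} and the conditions \eqref{eq_A3}--\eqref{eq_A6}) already carried out for Theorem \ref{th_th4}.
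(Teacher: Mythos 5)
Your proposal is correct and is exactly the argument the paper intends: the paper omits the proof, stating only that it follows Theorem \ref{th_th1_1}, and your Steps 1--3 (cancellation of the $\mathcal{L}^{\sigma}$-term by the choice $t\dot\beta=(\frac1\delta-2)\beta$, monotonicity of $\mathcal{E}^{\lambda,\rho}_{\epsilon}$ for every $\lambda$, uniformity over $\{\|\lambda\|\le\varrho\}$, and the passage through Lemma \ref{le_A2} plus the saddle-point inequality) are precisely that template instantiated with $r=s=\rho=1$ and the Lyapunov data of Theorem \ref{th_th4}. No gaps.
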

From Theorem \eqref{th_th4_1}, we have following result.

\begin{corollary}\label{cor_th4_1}
	Let $\beta(t)= \beta>0$,  $\delta=\frac{1}{2}$, $\alpha>3$, $\sigma\geq 0$. Suppose $\int^{+\infty}_{t_0}t\|\epsilon(t)\|dt<+\infty$. Let   $(x(t),\lambda(t))$ be a solution of dynamic \eqref{dy_dy3}. For any $(x^*,\lambda^*)\in\Omega$:
\[ |f(x(t))-f(x^*)| =\mathcal{O}(\frac{1}{t^2}),\quad \|Ax(t)-b\| =\mathcal{O}(\frac{1}{t^2}).\]
\end{corollary}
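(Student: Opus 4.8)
The plan is to obtain this as a direct specialization of Theorem~\ref{th_th4_1}. First I would set $\delta=\tfrac{1}{2}$, so that $\tfrac{1}{\delta}=2$ and the scaling function $\beta(t)=\mu t^{1/\delta-2}=\mu t^{0}=\mu$ prescribed there collapses to an arbitrary positive constant; choosing $\mu=\beta$ matches the constant scaling in the present statement. Next I would check the structural hypothesis $2\le\tfrac{1}{\delta}<\alpha-1$ of Theorem~\ref{th_th4_1}: here it reads $2\le 2<\alpha-1$, i.e.\ exactly $\alpha>3$, and the left-hand boundary case $\tfrac{1}{\delta}=2$ is admissible since Theorem~\ref{th_th4_1} permits equality there. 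The integrability requirement $\int_{t_0}^{+\infty}t\|\epsilon(t)\|\,dt<+\infty$ and the sign condition $\sigma\ge 0$ are assumed verbatim, and the constant scaling trivially satisfies $t\dot{\beta}(t)=0\le(\tfrac{1}{\delta}-2)\beta(t)=0$. Reading off the conclusion of Theorem~\ref{th_th4_1} then yields $|f(x(t))-f(x^*)|=\mathcal{O}(1/t^{1/\delta})=\mathcal{O}(1/t^{2})$ and $\|Ax(t)-b\|=\mathcal{O}(1/t^{1/\delta})=\mathcal{O}(1/t^{2})$, which is the assertion.

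There is essentially no obstacle here. The one subtlety I would flag is that the corollary genuinely needs Theorem~\ref{th_th4_1} and not merely Theorem~\ref{th_th4}: the latter applied with $\beta(t)\equiv\beta$ only delivers $\|Ax(t)-b\|=\mathcal{O}(1/t)$ through part~(iv), whereas the $\mathcal{O}(1/t^{2})$ rate for the feasibility residual comes from the refinement behind Theorem~\ref{th_th4_1} — running the Lyapunov estimate for every shifted multiplier $\lambda$, passing to the supremum over a ball $\|\lambda\|\le\varrho$ with $\varrho>\|\lambda^*\|$ via Lemma~\ref{le_A2}, and then combining the resulting bound on $f(x(t))-f(x^*)+\varrho\|Ax(t)-b\|$ with the saddle-point inequality $f(x(t))-f(x^*)\ge-\|\lambda^*\|\,\|Ax(t)-b\|$ to decouple the two estimates. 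Since that argument is already carried out in the proof of Theorem~\ref{th_th4_1} (itself patterned on Theorem~\ref{th_th1_1}), nothing further is required beyond the parameter substitution above.
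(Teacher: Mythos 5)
Your derivation is correct and matches the paper's own route exactly: the corollary is obtained by specializing Theorem~\ref{th_th4_1} with $\delta=\tfrac12$ (so $1/\delta=2$, $\beta(t)=\mu t^{1/\delta-2}$ is constant, and $2\le 1/\delta<\alpha-1$ becomes $\alpha>3$). Your side remark that Theorem~\ref{th_th4} alone would only give $\|Ax(t)-b\|=\mathcal{O}(1/t)$ is also accurate.
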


\begin{remark}\label{re_th41_1}
	Theorem \ref{th_th4_1} shows the optimal convergence rates of dynamic \eqref{dy_dy3} in the case $\alpha>3$. The $O(1/t^{p+2})$ convergence rate results associated with the time scaling $\beta(t) = \mu t^p$ for unconstrained optimization problem \eqref{min_fun} can be found in \cite{AttouchCRF2019,Wibisono2016}, it also can be found in \cite{Fazlyab2017} with Euclidean setting of Bregman distance for problem \eqref{question}. Corollary \ref{cor_th4_1} showst the convergence rate of  objective function and  constraint of dynamical system \eqref{dy_zeng} is $\mathcal{O}(\frac{1}{t^2})$ instead of $\mathcal{O}(\frac{1}{t})$.
\end{remark}

\subsection{Summary of results}
In the subsection, we complete the tables giving a synthetic view of convergence results in before.

For dynamic \eqref{dy_Appendix} with  different $r$ and $s$, chose suitable parameters $\alpha,\ \delta$.
  Table \ref{table_1} lists the convergences rates for $\mathcal{L}(x(t),\lambda^*)-\mathcal{L}(x^*,\lambda^*) $ of dynamic \eqref{dy_dynamic} under different  assumptions of $\beta(t)$ and $\epsilon(t)$.   Table \ref{table_2}  summarizes the properties of trajectory $(x(t),\lambda(t))$ and its derivates  $(\dot{x}(t),\dot{\lambda}(t))$.  (See Theorem \ref{th_th1}, Corollary \ref{cor_th2}, Theorem \ref{th_th3}, Theorem \ref{th_th4}, Remark \ref{re_th2_1}, Remark \ref{re_th3_1}). The results extend the  inertial dynamic with time scaling in \cite{ AttouchCRF2019,Attouchcrf2019,Balhag2020,Wibisono2016} for problem \eqref{min_fun} to primal-dual dynamic \eqref{dy_dynamic} for problem \eqref{question}. Taking $A=0, b=0$, our results also can  complement the existing results the inertial dynamic with time scaling.

  \begin{table}[!h]
		\centering
		\caption{Convergence rates for $\mathcal{L}(x(t),\lambda^*)-\mathcal{L}(x^*,\lambda^*)$ of dynamic \eqref{dy_dynamic}}\label{table_1}
		\renewcommand\arraystretch{1.5}
		\setlength{\tabcolsep}{2.5mm}
		\begin{tabular}{|c|c|c|c|c|}
			\hline
			 \multicolumn{2}{|c|}{$r, s$}  & $\beta(t)$ &$\epsilon(t)$&$\mathcal{L}(x(t),\lambda^*)-\mathcal{L}(x^*,\lambda^*) $\\
			 \hline
			\multicolumn{2}{|c|}{$r =0, s\in[0,1]$}&{$t^s\dot{\beta}(t)\leq (\frac{1}{\delta}-st^{s-1})\beta(t)$}&$\int^{+\infty}_{t_0}t^{\frac{s}{2}}\|\epsilon(t)\|dt<+\infty$ &{$\mathcal{O}(\frac{1}{t^s\beta(t)})$} \\
			\hline
			\multicolumn{2}{|c|}{$r\in(0,1), s\in[r,1]$} & {$t^s\dot{\beta}(t)\leq (\frac{1}{\delta}-(r+s) t^{s-1})\beta(t)$}&{$\int^{+\infty}_{t_0}t^{\frac{r+s}{2}}\|\epsilon(t)\|dt<+\infty$} &{$\mathcal{O}(\frac{1}{t^{\rho}\beta(t)}), \forall \rho\in(0,{r+s})$}\\
			\hline
			 \multirow{2}{*}{$r=s=1$}&{$\alpha\leq 3$}&{$	t\dot{\beta}(t)\leq \tau \beta(t)$, $\tau\in[0,\alpha]$}& {$\int^{+\infty}_{t_0}t^{\frac{\alpha-\tau}{3}}\|\epsilon(t)\|dt<+\infty$}& {$\mathcal{O}\left(\frac{1}{t^{{2(\alpha-\tau)/3}}\beta(t)}\right)$}\\
			 \cline{2-5}
			&{$\alpha> 3$} & {$ t\dot{\beta}(t)\leq (\frac{1}{\delta}-2) \beta(t)$}& {$\int^{+\infty}_{t_0}t\|\epsilon(t)\|dt<+\infty$}& {$\mathcal{O}(\frac{1}{t^{2}\beta(t)})$}\\	
			\hline
		\end{tabular}	
\end{table}

\begin{table}[!h]
		\centering
		\caption{Summary of trajectory properties}\label{table_2}
		\renewcommand\arraystretch{1.4}
		\setlength{\tabcolsep}{5.5mm}
		\begin{tabular}{|c|c|c|c|}
			\hline
			 \multicolumn{2}{|c|} {$r, s$}  &$\|\dot{x}(t)\|+\|\dot{\lambda}(t)\|$ &$\mathcal{I}=\|{x}(t)-x^*\|+\|\lambda(t)-\lambda^*\|$ \\
			 \hline
			\multicolumn{2}{|c|} {$r = s = 0$}& bounded &  $\mathcal{I}$ bounded\\
			\hline
			\multicolumn{2}{|c|}{$r = 0,\ s\in(0,1]$}& {$\mathcal{O}(\frac{1}{t^{s/2}})$}& $\mathcal{I}$ bounded \\
		\hline
		  \multicolumn{2}{|c|} {$r\in(0,1),\ s\in[r,1]$}& $\mathcal{O}(\frac{1}{t^{\rho}}),\forall \rho\in(0,\frac{r+s}{2})$  &$t^\rho \mathcal{I}$ bounded, $ \forall \rho\in(-\frac{r+s}{2}, 0)$\\
		  \hline
		\multirow{3}{*}{$r=s=1$} & $\alpha=3, \tau=0$ &$\mathcal{O}(\frac{1}{t^{\rho}}),\forall \rho\in(0,1)$ &$t^\rho \mathcal{I}$ bounded, $ \forall \rho\in(-1,0)$ \\
		 \cline{2-4}
		   &$\alpha\leq 3,\ 0\leq\alpha-\tau<3$ & $\mathcal{O}(\frac{1}{t^{(\alpha-\tau)/3}})$ &$t^{\frac{\alpha+\tau}{3}-1}\mathcal{I}$ bounded \\
		 \cline{2-4}
		 &  $\alpha>3$ &$\mathcal{O}(\frac{1}{t})$& $\mathcal{I}$ bounded\\
\hline
		\end{tabular}	
\end{table}

Select a specific time scaling $\beta(t)$ with suitable parameters $\alpha,\ \delta$. Table \ref{table_3} shows optimal convergence rates we can expect for different choices of  coefficients. (See Theorem \ref{th_th1_1}, Theorem \ref{th_th1_2}, Theorem \ref{th_th1_3}, Theorem \ref{th_th2_1}, Theorem \ref{th_th2_2}, Corollary \ref{cor_th3_2}, Theorem \ref{th_th4_1})

\begin{table}[!h]
		\centering
		\caption{Optimal convergence rates of $|f(x(t)-f(x^*))|$ and $\|Ax(t)-b\|$}\label{table_3}
		\renewcommand\arraystretch{1.6}
		\setlength{\tabcolsep}{7mm}
		\begin{tabular}{|c|c|c|c|}
			\hline
			  \multicolumn{2}{|c|}{$r, s$}  &$\beta(t)$ &$|f(x(t)-f(x^*))|$ and $\|Ax(t)-b\|$\\
			 \hline
			 \multicolumn{2}{|c|}{$r =0, s\in [0,1)$}&{$\mu \frac{e^{\frac{1}{\delta(1-s)}t^{1-s}}}{t^s}$} & {$\mathcal{O}\left(\frac{1}{e^{\frac{1}{\delta(1-s)}t^{1-s}}}\right)$ } \\
			\hline
			\multicolumn{2}{|c|}{$r =0, s=1$}&{$\mu t^{\frac{1}{\delta}-1}$} & {$\mathcal{O}(\frac{1}{t^{1/\delta}})$} \\
			\hline
			\multicolumn{2}{|c|}{$r\in(0,1), s\in [r,1)$}&{$\mu \frac{e^{\frac{1}{\delta(1-s)}t^{1-s}}}{t^\tau}$, $\forall \tau\in(0,r+s)$} & {$\mathcal{O}\left(\frac{1}{e^{\frac{1}{\delta(1-s)}t^{1-s}}}\right)$} \\
			\hline
			\multicolumn{2}{|c|}{$r\in(0,1), s=1$}&{$\mu t^{\frac{1}{\delta}-\tau}$, $\forall \tau\in(0,r+1)$} & {$\mathcal{O}(\frac{1}{t^{1/\delta}})$} \\
			\hline
			\multirow{2}{*}{$r=s=1$}& $\alpha\leq 3$& $\mu t^{\alpha}$& {$\mathcal{O}(\frac{1}{t^\alpha})$}\\
			\cline{2-4}
			&$\alpha> 3$& $\mu t^{1/\delta-2}$& {$\mathcal{O}(\frac{1}{t^{1/\delta}})$}\\
			\hline
		\end{tabular}	
\end{table}

Taking time scaling $\beta(t)\equiv 1$, Table \ref{table_4} lists the  corresponding convergence rates (See Remark \ref{re_th1_3}, Theorem \ref{th_th1}, Theorem \ref{th_th1_3}, Theorem \ref{th_th2}, Theorem \ref{th_th2_2}, Corollary \ref{cor_th3_1}, Corollary \ref{cor_th4_1}), it extends the convergence rates of $(IGS_\alpha)$ and  $(IGS_{\alpha,\epsilon})$ in \cite{AttouchCPR2018, AttouchCRR2019, Balti2016, Su2014, Sun2019, Vassilis2018} for unconstrained optimization problems to primal-dual dynamic \eqref{dy_dynamic} for linear equality  constrained optimization problems. It also  extend and complements the existing results of inertial primal-dual dynamic in \cite{AttouchCFR2021,HeHF2021C,HeHF2020,HeHF2021F,Zeng2019}.

 \begin{table}[!h]
		\centering
		\caption{Convergence rates  of dynamic \eqref{dy_dynamic} with $\beta(t)\equiv 1$}\label{table_4}
		\renewcommand\arraystretch{1.4}
		\setlength{\tabcolsep}{8mm}
		\begin{tabular}{|c|c|c|c|}
			\hline
			 \multicolumn{2}{|c|}{$r, s$}  & $|f(x(t))-f(x^*)|$ and $\|Ax(t)-b\|$&$\mathcal{L}(x(t),\lambda^*)-\mathcal{L}(x^*,\lambda^*) $\\
			 \hline
			 \multicolumn{2}{|c|}{$r =0, s=0$}&{$\mathcal{O}(\frac{1}{\sqrt{t}})$ ergodic sence}&{$\mathcal{O}(\frac{1}{{t}})$ ergodic sence} \\
			\hline
			\multicolumn{2}{|c|}{$r =0, s\in(0,1)$}&{$\mathcal{O}(\frac{1}{t^{s/2}})$} &{$\mathcal{O}(\frac{1}{t^s})$} \\
			\hline
			\multicolumn{2}{|c|}{$r=0, s=1$} &\multicolumn{2}{|c|}{$\mathcal{O}(\frac{1}{t})$}\\
			\hline
			\multicolumn{2}{|c|}{$r\in(0,1), s\in[r,1)$} &{$\mathcal{O}(\frac{1}{t^{\tau/2}}), \forall \tau\in(0,r+s)$} &{$\mathcal{O}(\frac{1}{t^{\tau}}), \forall \tau\in(0,r+s)$}\\
			\hline
			\multicolumn{2}{|c|}{$r\in(0,1), s=1$}  &\multicolumn{2}{|c|}{$\mathcal{O}(\frac{1}{t^{\tau}}),\quad \forall \tau\in(0,r+1)$}\\
			\hline
			 \multirow{2}{*}{$r=s=1$}&{$\alpha\leq 3$}&\multicolumn{2}{|c|}{$\mathcal{O}(\frac{1}{t^{{2\alpha/3}}})$}\\
			 \cline{2-4}
			&{$\alpha> 3$} &\multicolumn{2}{|c|}{$\mathcal{O}(\frac{1}{t^2})$}\\	
			\hline
		\end{tabular}	
\end{table}

\section{Conclusion}
In this paper,  we propose a family of damped inertial primal-dual dynamical systems with time scaling for solving problem \eqref{question} in Hilbert space. We extend the  inertial dynamic  in \cite{AttouchCPR2018, AttouchCRR2019, AttouchCRF2019,Balhag2020, Balti2016, Su2014, Sun2019, Vassilis2018,Wibisono2016} for solving unconstrained optimization problems to primal-dual dynamic \eqref{dy_dynamic} for  solving linear equality constrained convex optimization problems. Our results also  extend and complement the existing results of inertial primal-dual dynamics in \cite{AttouchCFR2021,HeHF2021C,HeHF2020,HeHF2021F,Zeng2019}.
 Taking $A=0, b=0$,  our results also complement the convergence rate results of  existing inertial dynamic for solving unconstrained convex optimization problems. By discretization of primal-dual dynamic \eqref{dy_Appendix}, it may lead to new primal-dual algorithms for solving problem \eqref{question},  how to chose suitable discretization scheme of \eqref{dy_Appendix} to get rate-matching algorithms is an interesting direction of research. From references \cite{HeHF2021C,HeHF2021F}, it seems achievable, and we will consider it in the future works.

\appendix
\section{Some auxiliary results}
\subsection{Differentiating the energy function}\label{APPENDIX_A1}
In this part, we list the main calculation procedures for differentiating the energy function $\mathcal{E}^{\lambda,\rho}_{\epsilon}(t)$.

Multiplying the first equation of \eqref{dy_Appendix} by $t^\rho$, we have
\[t^\rho\ddot{x}(t) = -\alpha t^{\rho-r}\dot{x}(t)- t^\rho\beta(t)(\nabla f(x(t))+A^T(\lambda(t)+\delta t^s\dot{\lambda}(t))+\sigma A^T(Ax(t)-b))+t^\rho\epsilon(t). \]
This yields
\begin{eqnarray*}
	&&\dot{\mathcal{E}}_1(t) = \langle \theta(t)(x(t)-x^*)+t^\rho\dot{x}(t), \dot{\theta}(t)(x(t)-x^*)+\theta(t)\dot{x}(t)+\rho t^{\rho-1}\dot{x}(t)+t^\rho\ddot{x}(t)\rangle \\
	&&\qquad + \frac{\dot{\eta}(t)}{2}\|x(t)-x^*\|^2+ \eta(t) \langle x(t)-x^*,\dot{x}(t)\rangle\\
	&&\quad = \langle \theta(t)(x(t)-x^*)+t^\rho\dot{x}(t), \dot{\theta}(t)(x(t)-x^*)+(\theta(t)+\rho t^{\rho-1}-\alpha t^{\rho-r})\dot{x}(t)\\
	&&\qquad -t^\rho\beta(t)(\nabla f(x(t))+A^T(\lambda(t)+\delta t^s\dot{\lambda}(t))+\sigma A^T(Ax(t)-b))+t^\rho \epsilon(t) \rangle \\
	&&\qquad + \frac{\dot{\eta}(t)}{2}\|x(t)-x^*\|^2+ \eta(t) \langle x(t)-x^*,\dot{x}(t)\rangle\\
	&&\quad= (\theta(t)\dot{\theta}(t)+\frac{\dot{\eta}(t)}{2})\|x(t)-x^*\|^2+ t^\rho(\theta(t)+\rho t^{\rho-1}-\alpha t^{\rho-r})\|\dot{x}(t)\|^2\\
	&&\qquad+(\theta(t)(\theta(t)+\rho t^{\rho-1}-\alpha t^{\rho-r})+t^\rho\dot{\theta}(t)+\eta(t))\langle x(t)-x^*,\dot{x}(t)\rangle \\
	&&\qquad-\delta\theta(t)t^{\rho+s}\beta(t)\langle x(t)-x^*, A^T\dot{\lambda}(t)\rangle
	-\delta t^{2\rho+s}\beta(t)\langle A\dot{x}(t),\dot{\lambda}(t)\rangle\\
	&& \qquad-\theta(t)t^\rho\beta(t)(\langle x(t)-x^*, \nabla f(x(t))+A^T\lambda(t)+\sigma A^T(Ax(t)-b)\rangle\\
	&&\qquad- t^{2\rho}\beta(t)\langle \dot{x}(t), \nabla f(x(t))+A^T\lambda(t)+\sigma  A^T(Ax(t)-b)\rangle\\
	&&\qquad +\langle \theta(t)(x(t)-x^*)+t^\rho\dot{x}(t),t^\rho \epsilon(t)\rangle.
	\end{eqnarray*}
Similarly, we have
\begin{eqnarray*}
	&&\dot{\mathcal{E}}_2(t)= (\theta(t)\dot{\theta}(t)+\frac{\dot{\eta}(t)}{2})\|\lambda(t)-\lambda\|^2+ t^\rho(\theta(t)+\rho t^{\rho-1}-\alpha t^{\rho-r})\|\dot{\lambda}(t)\|^2\\
	&&\quad+(\theta(t)(\theta(t)+\rho t^{\rho-1}-\alpha t^{\rho-r})+t^\rho\dot{\theta}(t)+\eta(t))\langle \lambda(t)-\lambda,\dot{\lambda}(t)\rangle \\
	&&\quad +\theta(t)t^\rho\beta(t)\langle  \lambda(t)-\lambda, Ax(t)-b\rangle+\delta\theta(t)t^{\rho+s}\beta(t)\langle \lambda(t)-\lambda, A\dot{x}(t)\rangle\\
	&&\quad +t^{2\rho}\beta(t)\langle \dot{\lambda}(t),  Ax(t)-b\rangle+ \delta t^{2\rho+s}\beta(t)\langle \dot{\lambda}(t),A\dot{x}(t)\rangle.
\end{eqnarray*}
Differentiating of $\mathcal{E}_0(t)$ to get
\begin{eqnarray*}
	\dot{\mathcal{E}}_0(t) &=& t^{2\rho}\beta(t)\langle \nabla f(x(t))+A^T\lambda+\sigma A^T(Ax(t)-b),\dot{x}(t)\rangle\\
	&&+(2\rho t^{2\rho-	1}\beta(t)+t^{2\rho}\dot{\beta}(t))( \mathcal{L}^{\sigma}(x(t),\lambda)- \mathcal{L}^{\sigma}(x^*,\lambda)).
\end{eqnarray*}
Let $\theta(t)$ satisfy $t^{2\rho}\beta(t)=\delta \theta(t)t^{\rho+s}\beta(t)$. Adding $\dot{\mathcal{E}}_0(t)$, $\dot{\mathcal{E}}_1(t)$, $\dot{\mathcal{E}}_2(t)$ together, using $Ax^*= b$  and rearranging the terms, we get
\begin{equation*}
	\dot{\mathcal{E}}^{\lambda,\rho}(t)=\dot{\mathcal{E}}_0(t)+	\dot{\mathcal{E}}_1(t)+	\dot{\mathcal{E}}_2(t) = \sum^5_{i=1} \mathcal{V}_i(t),
\end{equation*}
where
\begin{eqnarray*}
	\mathcal{V}_1(t) &=&  \left(\theta(t)\dot{\theta}(t)+\frac{\dot{\eta}(t)}{2}\right)(\|x(t)-x^*\|^2+\|\lambda(t)-\lambda\|^2),\\
	\mathcal{V}_2(t) &=&(\theta(t)(\theta(t)+\rho t^{\rho-1}-\alpha t^{\rho-r})+t^\rho\dot{\theta}(t)+\eta(t))(\langle x(t)-x^*,\dot{x}(t)\rangle+\langle \lambda(t)-\lambda,\dot{\lambda}(t)\rangle),\\
	\mathcal{V}_3(t) &=& t^\rho(\theta(t)+\rho t^{\rho-1}-\alpha t^{\rho-r}) (\|\dot{x}(t)\|^2+\|\dot{\lambda}(t)\|^2),\\
	\mathcal{V}_4(t) &=& t^{\rho}(t^{\rho}\dot{\beta}(t)+(2\rho t^{\rho-1}-\theta(t))\beta(t))(\mathcal{L}^{\sigma}(x(t),\lambda)- \mathcal{L}^{\sigma}(x^*,\lambda))\\
	&&+\theta(t)t^\rho\beta(t)( f(x(t))-f(x^*)-\langle x(t)-x^*, \nabla f(x(t))\rangle)- \frac{\sigma\theta(t)t^\rho\beta(t)}{2}\|Ax(t)-b\|^2,\\
	\mathcal{V}_5(t) &=& \langle \theta(t)(x(t)-x^*)+t^\rho\dot{x}(t),t^\rho \epsilon(t)\rangle.
\end{eqnarray*}
To investigate the rates of convergence of dynamical system \eqref{dy_Appendix}, we need to find the appropriate $\theta(t)$ and $\eta(t)$ to satisfy the following conditions:
\begin{eqnarray}
		\theta(t)\geq 0, \quad \eta(t)&\geq& 0,\label{eq_A3}\\
		t^{2\rho}\beta(t)-\delta \theta(t)t^{\rho+s}\beta(t)&=& 0,\label{eq_A4}\\
		\theta(t)\dot{\theta}(t)+\frac{\dot{\eta}(t)}{2}&\leq& 0,\label{eq_A5}\\
		\theta(t)(\theta(t)+\rho t^{\rho-1}-\alpha t^{\rho-r})+t^\rho\dot{\theta}(t)+\eta(t)&=&0,\label{eq_A6}
	\end{eqnarray}
Then $\mathcal{V}_1\leq 0$,  $\mathcal{V}_2=0$, this together with the convexity of $f$ yields
\begin{eqnarray}\label{eq_A8}
\dot{\mathcal{E}}^{\lambda,\rho}_{\epsilon}(t) &=&\dot{\mathcal{E}}^{\lambda,\rho}(t)-\langle \theta(t)(x(t)-x^*)+t^\rho\dot{x}(t),t^\rho \epsilon(t)\rangle\nonumber \\
		&\leq& t^\rho(\theta(t)+\rho t^{\rho-1}-\alpha t^{\rho-r}) (\|\dot{x}(t)\|^2+\|\dot{\lambda}(t)\|^2)- \frac{\sigma\theta(t)t^\rho\beta(t)}{2}\|Ax(t)-b)\|^2 \nonumber\\
	 && +t^{\rho}(t^{\rho}\dot{\beta}(t)+(2\rho t^{\rho-1}-\theta(t))\beta(t))(\mathcal{L}^{\sigma}(x(t),\lambda)- \mathcal{L}^{\sigma}(x^*,\lambda))
\end{eqnarray}
for any $\lambda\in\mathcal{H}_2$.

\subsection{Technical lemmas:}
In convergence analysis for the dynamical system, we shall recall the following  lemmas.
\begin{lemma}\cite[Lemma A.5]{Brezis1973}\label{le_A1}
	Let $\nu:[t_0,T]\to [0,+\infty)$ be integrable, and $M\geq 0$. Suppose $\mu:[t_0,T]\to R $ is continuous and
	\[ \frac{1}{2}\mu(t)^2\leq \frac{1}{2}M^2+\int^{t}_{t_0}\nu(s)\mu(s)ds\]
	for all $t\in[t_0,T]$. Then $|\mu(t)|\leq M+\int^t_{t_0}\nu(s)ds$ for all $t\in[t_0,T]$.
\end{lemma}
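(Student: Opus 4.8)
The plan is to run the standard regularization-and-integration argument for Gronwall--Brezis type inequalities. First I would fix $\varepsilon>0$ and introduce the auxiliary function $\Psi_\varepsilon(t)=\tfrac12(M^2+\varepsilon)+\int_{t_0}^t\nu(s)\mu(s)\,ds$, which is absolutely continuous on $[t_0,T]$ because $\nu$ is integrable and $\mu$ is continuous, hence bounded, on the compact interval. From the hypothesis $\tfrac12\mu(t)^2\le\tfrac12M^2+\int_{t_0}^t\nu\mu$ one gets, on the one hand, $\tfrac12\mu(t)^2\le\Psi_\varepsilon(t)$ for all $t$, and on the other hand $\int_{t_0}^t\nu\mu\ge-\tfrac12M^2$, so that $\Psi_\varepsilon(t)\ge\varepsilon/2>0$ throughout $[t_0,T]$. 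This strict positivity is exactly what makes the square root below harmless.

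Next I would differentiate. For almost every $t$, since $\nu\ge0$ and $|\mu(t)|\le\sqrt{2\Psi_\varepsilon(t)}$, one has $\dot\Psi_\varepsilon(t)=\nu(t)\mu(t)\le\nu(t)|\mu(t)|\le\nu(t)\sqrt{2\Psi_\varepsilon(t)}$. Because $\Psi_\varepsilon$ is absolutely continuous and bounded away from $0$, the composition $t\mapsto\sqrt{2\Psi_\varepsilon(t)}$ is again absolutely continuous, and the chain rule gives $\frac{d}{dt}\sqrt{2\Psi_\varepsilon(t)}=\dot\Psi_\varepsilon(t)/\sqrt{2\Psi_\varepsilon(t)}\le\nu(t)$ for almost every $t$. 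Integrating this inequality over $[t_0,t]$ and using the fundamental theorem of calculus for absolutely continuous functions yields $\sqrt{2\Psi_\varepsilon(t)}\le\sqrt{2\Psi_\varepsilon(t_0)}+\int_{t_0}^t\nu(s)\,ds=\sqrt{M^2+\varepsilon}+\int_{t_0}^t\nu(s)\,ds$.

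Finally I would combine $|\mu(t)|\le\sqrt{2\Psi_\varepsilon(t)}$ with the previous bound and let $\varepsilon\to0^+$, obtaining $|\mu(t)|\le M+\int_{t_0}^t\nu(s)\,ds$ for every $t\in[t_0,T]$, which is the assertion.

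The one genuinely delicate point, and where I would spend the most care, is that the merely integrable weight $\nu$ makes $\Psi_\varepsilon$ only absolutely continuous, not $C^1$: one must justify both that $\sqrt{2\Psi_\varepsilon}$ stays absolutely continuous (true because $x\mapsto\sqrt{x}$ is Lipschitz on $[\varepsilon,\infty)$ while $2\Psi_\varepsilon$ takes values in that set) and that a pointwise almost-everywhere differential inequality may be integrated to a pointwise inequality (valid for absolutely continuous functions). The $\varepsilon$-regularization is what prevents $1/\sqrt{2\Psi_\varepsilon}$ from blowing up near points where $\Psi$ itself might vanish; in particular it covers the borderline case $M=0$, where no argument starting from $\tfrac12 M^2$ alone would work.
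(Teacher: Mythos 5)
Your argument is correct and complete: the $\varepsilon$-regularization, the a.e.\ differentiation of the absolutely continuous function $\Psi_\varepsilon$, the Lipschitz composition with $\sqrt{\cdot}$ bounded away from zero, and the final passage $\varepsilon\to 0^+$ are all justified as you state them. The paper itself gives no proof of this lemma (it simply cites Brezis, Lemma A.5), and your argument is essentially the standard one from that reference, so there is nothing to add.
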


\begin{lemma}\cite[Lemma 2.1.]{Xu2017}\label{le_A2}
	For problem \eqref{question}, let $x^*$ be a solution. Given a function $\phi$ and a fix point $x$, if for any $\lambda$ it holds that
	\[ f(x)-f(x^*)+\langle \lambda,Ax-b\rangle \leq \phi(\lambda),\]
	then for any $\varrho>0$, we have
	\[ f(x)-f(x^*)+\varrho\|Ax-b\| \leq \sup_{\|\lambda\|\leq \varrho}\phi(\lambda),\]
\end{lemma}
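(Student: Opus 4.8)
The plan is to reduce the statement to the elementary fact that, in a Hilbert space, the support function of the closed ball of radius $\varrho$ centered at the origin equals $\varrho$ times the norm. Concretely, for the fixed element $y:=Ax-b\in\mathcal{H}_2$, the key claim is
\[
 \sup_{\|\lambda\|\leq\varrho}\langle\lambda,y\rangle = \varrho\|y\|.
\]
The inequality ``$\leq$'' is immediate from Cauchy--Schwarz, since $\langle\lambda,y\rangle\leq\|\lambda\|\,\|y\|\leq\varrho\|y\|$ whenever $\|\lambda\|\leq\varrho$. For the reverse inequality I would exhibit a maximizer: if $y=0$ both sides vanish, while if $y\neq 0$ the choice $\lambda=\varrho\,y/\|y\|$ lies in the ball and attains $\langle\lambda,y\rangle=\varrho\|y\|$.

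First I would fix $\varrho>0$ and restrict the hypothesis to the ball: for every $\lambda$ with $\|\lambda\|\leq\varrho$,
\[
 f(x)-f(x^*)+\langle\lambda,Ax-b\rangle \leq \phi(\lambda) \leq \sup_{\|\lambda'\|\leq\varrho}\phi(\lambda').
\]
Since the term $f(x)-f(x^*)$ does not depend on $\lambda$, I would then take the supremum over $\{\lambda:\|\lambda\|\leq\varrho\}$ on the left-hand side, which acts only on the inner-product term, giving
\[
 f(x)-f(x^*)+\sup_{\|\lambda\|\leq\varrho}\langle\lambda,Ax-b\rangle \leq \sup_{\|\lambda\|\leq\varrho}\phi(\lambda).
\]
Substituting the identity from the first step with $y=Ax-b$ then yields exactly the asserted bound $f(x)-f(x^*)+\varrho\|Ax-b\|\leq\sup_{\|\lambda\|\leq\varrho}\phi(\lambda)$.

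There is essentially no hard part here: the only content is the support-function identity, which is a one-line Cauchy--Schwarz estimate paired with the explicit maximizer $\lambda=\varrho(Ax-b)/\|Ax-b\|$. The single point warranting a moment's care is the degenerate case $Ax-b=0$, where the maximizer formula is undefined but the identity holds trivially because both sides vanish; I would therefore split the verification of the reverse inequality into the two cases $Ax-b=0$ and $Ax-b\neq 0$ to keep the maximizer step rigorous.
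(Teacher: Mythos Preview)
Your proof is correct and follows the natural elementary argument via the support-function identity $\sup_{\|\lambda\|\le\varrho}\langle\lambda,y\rangle=\varrho\|y\|$. The paper does not actually prove this lemma; it is quoted from \cite[Lemma~2.1]{Xu2017}, so there is no in-paper argument to compare against, but your approach is exactly the standard one and would serve as a complete self-contained proof.
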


\end{document}